\newcommand{\mc}[1]{\mathcal{#1}}
\numberwithin{equation}{section}
\theoremstyle{definition}
\newtheorem{definition}{Definition}[section]
\theoremstyle{remark}
\newtheorem{remark}[definition]{Remark}
\theoremstyle{plain}
\newtheorem{theorem}[definition]{Theorem}
\newtheorem{lemma}[definition]{Lemma}
\newtheorem{proposition}[definition]{Proposition}
\newcommand{\norm}[1]{\left\|#1\right\|}
\newcommand{\abs}[1]{\left|#1\right|}
\newcommand{\ip}[2]{\left<{#1},{#2}\right>}
\newcommand{\zahl}{\mathbb{Z}}
\newcommand{\wphi}{\widehat{\phi}}
\newcommand\Iint{\int\limits_{0}^{T}\int\limits_{0}^{1}}
\newcommand{\vphi}{\varphi}
\newcommand \F{\mathcal F}
\newcommand\Y{\mathcal{Y}}
\newcommand\V{\mathcal{V}}
\newcommand\C{\mathcal{C}}
\newcommand{\iintQ}{\iint_{Q_T}}
\date{\today}
\begin{document}
	
	\title[Controllability of the KS-KdV-elliptic system]{Local null-controllability of a system   coupling Kuramoto-Sivashinsky-KdV and elliptic equations}
	\author[K. Bhandari, and S. Majumdar]{Kuntal Bhandari$^*$, and Subrata Majumdar$^{\dagger}$}
	\thanks{$^*$Institute of Mathematics, Czech Academy of Sciences, \v{Z}itn\'a 25, Prague 1, Czech Republic, e-mail: bhandari@math.cas.cz (corresponding author)}
	\thanks{$^\dagger$Indian Institute of Science Education and Research Kolkata, Campus road, Mohanpur, West Bengal 741246, India; e-mail: sm18rs016@iiserkol.ac.in}
	\keywords{KS-KdV-elliptic system, null-controllability, Carleman estimates,  source term method, fixed point argument}
	\subjclass[2010]{35M33, 93B05, 93B07, 93C20}
	
	\begin{abstract}
	This paper deals with the null-controllability of a system of {\em mixed parabolic-elliptic pdes} at any given time $T>0$. More precisely, we consider the \textit{Kuramoto-Sivashinsky--Korteweg-de Vries equation} coupled with a second order elliptic equation posed in the interval $(0,1)$. We first show that the linearized system is globally null-controllable by means of a  localized interior control acting on either the KS-KdV or the elliptic equation. Using the \textit{Carleman approach}, we provide the existence of a control with the explicit cost $Ce^{C/T}$ with some constant $C>0$ independent in $T$.  Then, applying the source term method developed in \cite{Tucsnak-nonlinear}, followed by the \textit{Banach fixed point theorem}, we conclude the small-time local null-controllability result of the nonlinear systems. 
	\end{abstract}

	\maketitle
	
	
	\section{Introduction and main results}

	\subsection{Bibliographic comments and motivations}
	The controllability of \textit{coupled partial differential equations} has gained immense interest to the control communities since the past few decades. Due to the significant importance of the parabolic models in physical, chemical and biological sciences, several methods have been developed to study their controllability properties.   We first address the pioneer work by  Fattorini and Russell \cite{FR71, FR75} where the authors utilized the spectral strategy, i.e., the so-called \textit{moments method} to study the controllability (distributed or boundary) of 1D linear parabolic equations. Regarding the distributed controllability of parabolic pdes in dimension higher than $1$, we refer the well-known {\em Lebeau-Robbiano spectral method} \cite{LR95} (see also \cite{JR12}) which is based on a global elliptic Carleman estimate. 
	Next, we must mention that the {\em global Carleman estimate}, established by Fursikov and Imanuvilov in \cite{Fur-Ima} has been intensively used  to study the controllability of parabolic equations and systems. In this context, we refer the work \cite{Cara-Guerrero} by E. Fern\'andez-Cara and S. Guerrero where a global Carleman estimate has been established to study the controllability properties of some linear and certain semilinear parabolic equations. Later on, the controllability of a system of two parabolic equations with only one control force  has been proved by S. Guerrero \cite{G07}. 
	
	 However, in  most cases the Carleman approach is  inefficient to handle the boundary controllability of the coupled parabolic systems. In fact, the boundary controllability for such systems is no longer equivalent with distributed controllability as it has been observed for instance in \cite{Cara-Burgos-Teresa}. Due to this reason, most available results are in 1D and mainly based on the moments method. Among the fewest,  we refer for instance, \cite{KBBT11b}, \cite{ABBTc}, \cite{ABBT16}, \cite{Bhandari21a}.  For the multi-D situation, we address the work \cite{BBB14} where the boundary controllability results are obtained in the cylindrical geometries.   
	
In the direction of controllability of nonlinear parabolic systems, we mention the work by E. Fern\'andez-Cara and E. Zuazua \cite{FC05} where the authors proved small-time global null-controllability of semilinear heat equations with the growth of nonlinear function $|f(r)| \leq |r| \ln^{3/2}(1+|r|)$, see also \cite{Barbu} by V. Barbu.  Very recently,  the large-time global null-controllability of semilinear heat equations has been obtained in \cite{Kevin} for nonlinearities $f$ growing slower than $|r|\ln^2(1+|r|)$ under the sign condition:  $f(r)>0$ for $r>0$ and $f(r)<0$ for $r<0$. 
	   
	   \smallskip

	Another fascinating area in control theory is the study of   controllability properties of the fourth order parabolic equation:  $u_t+\gamma_1 u_{xxxx}+ \gamma_2 u_{xx}+ uu_x=0$ ($\gamma_1,\gamma_2>0$), the so-called   \textit{Kuramoto-Sivashinsky} (KS) equation. This  has been  popularly studied in the past decades,  such as \cite{CE11}, \cite{Zhou}, \cite{Cer17}, \cite{Taka17}.  In particular, the authors in \cite{Cer17}  proved that the linear KS equation with Neumann boundary conditions is null-controllable by a  control acting in some open subset of the domain. The author in \cite{Taka17} proved the boundary local null-controllability of the KS equation by utilizing the source term method (see \cite{Tucsnak-nonlinear})  followed by  the Banach fixed point argument where a suitable control cost $C e^{C/T}$ of the linearized model plays  the crucial role.   Similar strategy has  been applied in \cite{hernandezsantamaria:hal-03090716} to study the boundary local null-controllability of a simplified {\em stabilized KS} system (see below about this system). 
	In this regard, we must mention that the nonlinearity $uu_x$ in the KS system also appears  in the Burgers equation: $u_t - u_{xx} + uu_x=0$, and  concerning the controllability results of  this equation we address the following works: \cite{CG07}, \cite{SGI07}, \cite{OG07}.  
	
	\smallskip 
	
To include dispersive and extra dissipative effects in the KS equation, a coupled system containing fourth (Kuramoto-Sivashinsky-Korteweg-de Vries, in short KS-KdV equation) and second order parabolic (heat) equations, under the name of \textit{stabilized Kuramoto-Sivashinsky} system 
	\begin{align}\label{eq:KS_kDV}
		\begin{cases}
			u_t + \gamma_1 u_{xxxx}  + u_{xxx} + \gamma_2 u_{xx} + u u_x =  v_x, \\
			v_t -\Gamma v_{xx} +  c v_x = b u_x,  
		\end{cases}
	\end{align}
	has appeared for instance in \cite{PhysRevE.64.046304}. The controllability of such system has been considered   in several works, namely  \cite{CE10}, \cite{CE12}, \cite{Cerpa-Mercado-Pazoto}, \cite{CE16}, \cite{kumar2022null}.   E. Cerpa, A. Mercado and A. F. Pazoto in \cite{Cerpa-Mercado-Pazoto} demonstrated the  local null-controllability of \eqref{eq:KS_kDV} by means of localized interior control acting in the KS-KdV equation. Then,  E. Cerpa along with N. Carre\~{n}o \cite{CE16} proved the local null-controllability to the trajectories with a localized interior control exerted in the heat equation. Two different types of Carleman estimates for the linearized model followed by the \textit{inverse mapping theorem} have been implemented to  conclude their controllability results. We also point out that recently, the controllability of KS-KdV-transport model has been discussed in \cite{majumdar:hal-03695906}. Finally, it is worth mentioning that an insensitizing control problem for the stabilized KS system has been explored in \cite{KV}. 
	  
	\smallskip
	
Study of the parabolic-elliptic model, namely
	\begin{equation}
		\begin{cases}
			u_t-u_{xx}=au+bv,\\
			-v_{xx}=cu+dv,
		\end{cases}
	\end{equation}
 enlarges the literature of the control of pdes and such type of models  significantly appear in several  physical, chemical and biological  situations: as for example,   the prey-predator interaction, tumor growth therapy etc.  One may refer to the  works \cite{FC13}, \cite{GB14}, \cite{hernandez2021boundary}, \cite{Lim19} concerning the controllability issues  of such systems. 
	
	In the work \cite{FC13}, the authors proved the local null-controllability of a  semilinear parabolic-elliptic equation  by means of a localized interior control acting in  one of  the two equations. Similar question has been addressed in \cite{GB14} for the chemotaxis system of parabolic-elliptic type which is specially known as \textit{Keller-Segel} model. Both papers utilized the Kakutani's fixed-point theorem to handle the  nonlinearity.  In \cite{Lim19}, a local interior null-controllability of such system with local and nonlocal nonliearity has been established using appropriate Carleman estimates and fixed point argument. The paper \cite{FC16} dealt with the same issue for a system coupling two parabolic models and an elliptic model with one scalar control force, locally supported in space. 
	
	These works principally motivate us to study the null-controllability of a parabolic-elliptic system of coupled Kuramoto-Sivashinsky-Korteweg-de Vries equation and a second order elliptic equation. In fact, the coupled system of KS--elliptic equations can be seen as nonlocal KS equation.  In this regard, we mention the work \cite{Ioakim} where the author studied  the analyticity properties of solutions of the nonlocal KS equations arising	
		in the problem of a perfectly conducting thin-film flow down an inclined plane in the presence of an electric field.
	We also quote the work  \cite{Duan},  where the dynamical behavior of some certain non-local KS system has been studied.  However,  we are mostly interested in the mathematical analysis    from the view point of null-controllability of the mentioned systems by means of a single localized control.  
	
	\subsection{The systems under study}
	Let $T>0$ be any finite time and define $Q_T: = (0,T)\times (0,1)$. We also take any non-empty open set $\omega\subset (0,1)$.
Then, we  consider the following  control problems 
	\begin{equation}
		\begin{cases}
			\label{nonlinear-KS}
			u_t+\gamma_1 u_{xxxx}+u_{xxx}+\gamma_2 u_{xx} + uu_x = a v+ \chi_{\omega} h, &\text{in } Q_T,\\
			-v_{xx}+ c v= b u, & \text{in } Q_T, 
		\end{cases} 
	\end{equation}
with the constants $(a, b)\in \mathbb R^2$ such that $b\neq 0$; or 
\begin{equation}
	\begin{cases}
		\label{nonlinear-elliptic}
		u_t+\gamma_1 u_{xxxx}+u_{xxx}+\gamma_2 u_{xx} + uu_x = a v, &\text{in } Q_T,\\
		-v_{xx}+ c v= b u + \chi_{\omega} h,  & \text{in } Q_T, 
	\end{cases} 
\end{equation}
with $a\neq 0$.
Here $h$ is some control function (to be determined) localized in $\omega$, acting either through the KS-KdV equation or elliptic equation; $\gamma_1, \gamma_2>0$ denote the coefficients accounting the long wave instabilities and the short wave dissipation respectively.  Finally, we consider the velocity $c> -\pi^2$, the first eigenvalue of $\partial_{xx}$ with homogeneous Dirichlet boundary conditions, (one can find similar condition on $c$ in \cite{FC13}, \cite{JPPuel14}). 

The  boundary conditions for the state components are given by  
\begin{equation}
	\begin{cases}\label{bd}
		u(t, 0)=0,  \  u(t, 1)=0,  & t \in (0, T), \\
		u_x(t,0)=0, \  u_x(t,1)=0, & t \in (0, T),    \\
		v(t, 0)=0,  \  v(t, 1)=0, & t \in (0, T),
	\end{cases}
\end{equation}
and the initial condition is 
\begin{equation}\label{in}
	u(0, x)=u_0(x), \quad  x \in  (0, 1). 
\end{equation}


Looking into the coupled system \eqref{nonlinear-KS} or \eqref{nonlinear-elliptic}, it is seen that we consider only one localized control function $``h"$ and to obtain the null-controllability for the linearized system, it is necessary to achieve a suitable observability inequality for the associated coupled adjoint system with single observation term (depending on which component we exert the control). To obtain this, we shall use the Carleman estimates for KS and elliptic equations and then remove the  unusual observation terms with help of the coupling term(s). We shall observe this in Sections \ref{Sec-carleman-1} and \eqref{sec-carleman-2}.  Then, applying the source term method followed by a fixed point theorem (see \cite{Tucsnak-nonlinear}), we achieve the local controllability of the nonlinear systems.

\medskip

  We now prescribe the main results of our present work. 
  
\subsubsection{\bf Controllability results of the nonlinear systems}
\begin{theorem}[Control on KS-KdV equation]\label{Thm-nonlinear-KS} 
Let be $(a, b)\in \mathbb R^2$ with $b\neq 0$. Then, the system \eqref{nonlinear-KS}--\eqref{bd}--\eqref{in} is small-time locally null-controllable around the equilibrium $(0,0)$, that is to say, for any given time $T>0$,  there is a $R>0$ such that for chosen initial data $u_0\in H^{-1}(0,1)$ with $\|u_0\|_{H^{-1}(0,1)}\leq R$, there exists a control $h\in L^2((0,T)\times \omega)$  such that the associated solution $(u,v)$ satisfies 
		$$   \left( u(T, x), \, v(T, x) \right) = (0,0), \quad \forall \, x \, \in  (0,1). $$
\end{theorem}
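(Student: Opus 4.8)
The plan is to linearize \eqref{nonlinear-KS} around the equilibrium $(0,0)$, to absorb the nonlinear term $uu_x = \tfrac12\,\partial_x(u^2)$ into an external source $f$, and then to combine a quantified null-controllability result for the linearized model with the source-term method of \cite{Tucsnak-nonlinear} and a Banach fixed point argument. A convenient preliminary reduction is to exploit the hypothesis $c>-\pi^2$: the operator $\mathcal L:=-\partial_{xx}+c$ with homogeneous Dirichlet conditions is then boundedly invertible, so the elliptic equation yields $v=b\,\mathcal L^{-1}u$. Substituting this into the KS-KdV equation produces a single, spatially nonlocal fourth-order parabolic equation for $u$ whose coupling term $ab\,\mathcal L^{-1}u$ is of lower order and compact; this simplifies both the Carleman analysis and the energy estimates.

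First I would establish the \emph{linear} controllability: for every admissible source $f$ and every $u_0$, a control $h$ drives the linearized state to rest at time $T$ with the explicit cost $Ke^{K/T}$. By duality this is equivalent to an observability inequality for the adjoint system, which couples a backward adjoint KS-KdV equation with an adjoint elliptic equation. The technical heart is a global Carleman estimate for this adjoint pair using the standard Fursikov--Imanuvilov weights $e^{-s\alpha}$ and powers of $\xi$: one derives the fourth-order parabolic Carleman inequality for the KS-KdV adjoint, treating the terms $u_{xxx}$ and $\gamma_2 u_{xx}$ as lower-order perturbations absorbed by the large parameters $s,\lambda$, and couples it with an elliptic Carleman estimate for the adjoint elliptic component, using $b\neq 0$ to recover the observation of the elliptic adjoint from the single localized observation on the KS-KdV side. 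Tracking the $T$-dependence of the weights then delivers the cost $Ke^{K/T}$.

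Next I would implement the source-term method. Following \cite{Tucsnak-nonlinear}, I would introduce two weight functions that blow up super-exponentially as $t\to T^-$ (faster than $e^{K/T}$), defining a source space $\mathcal S$ and a trajectory space $\Y$, and show that the linear control map sends $(u_0,f)$ with $f\in\mathcal S$ to a state in $\Y$ vanishing at $t=T$, together with a control in a correspondingly weighted $L^2((0,T)\times\omega)$. The mechanism is to split $[0,T)$ into intervals accumulating at $T$, apply the linear result on each with cost $\sim e^{K/(T_{m+1}-T_m)}$, and let the free dissipative decay of the fourth-order equation between the active phases dominate the geometric growth of these costs; the super-exponential weights are chosen precisely so the resulting sums converge.

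Finally, the fixed point. I would set $\mathcal N(\hat u):=-\hat u\hat u_x$ and define $\Lambda:\hat u\mapsto u$, where $u$ is the controlled trajectory produced by the source $f=\mathcal N(\hat u)$ and data $u_0$. The decisive estimate is that $\hat u\mapsto \hat u\hat u_x$ is locally Lipschitz from $\Y$ into $\mathcal S$, which should follow from the parabolic smoothing encoded in $\Y$ (control of $u$ in a Sobolev space embedding into a Banach algebra) together with the extra weight margin between $\mathcal S$ and $\Y$. For $\|u_0\|_{H^{-1}(0,1)}\le R$ with $R$ small, $\Lambda$ maps a small ball of $\Y$ into itself and is a contraction, so Banach's theorem produces a fixed point $u$, and the associated $(u,v,h)$ solves \eqref{nonlinear-KS}--\eqref{bd}--\eqref{in} with $(u(T),v(T))=(0,0)$. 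The main obstacle will be the coupled Carleman estimate: combining the fourth-order parabolic estimate under the clamped conditions $u=u_x=0$ (whose boundary terms must be shown to have a favorable sign or be absorbable) with the elliptic estimate, while keeping the single localized observation sufficient to control both adjoint components and, crucially, extracting the sharp cost $Ke^{K/T}$ that the source-term scheme requires.
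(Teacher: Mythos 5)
Your proposal is correct and follows essentially the same route as the paper: a global Carleman estimate for the coupled adjoint system (using $b\neq 0$ to recover the elliptic adjoint from the single observation of the KS-KdV adjoint), yielding observability with cost $Ke^{K/T}$, then the source term method of \cite{Tucsnak-nonlinear} on a dyadic-type partition of $[0,T)$, and finally a Banach fixed point in weighted spaces built from $\rho_0$ and $\rho_{\F}$. The only cosmetic differences are your preliminary inversion of $-\partial_{xx}+c$ (the paper keeps the coupled form, though it uses the boundedness of $(-\partial_{xx}+cI)^{-1}$ in the Carleman proof) and your fixed point being taken on trajectories rather than on sources $f\mapsto -F(u)$ as in the paper; both are equivalent formulations resting on the same weighted Lipschitz estimate for $u\mapsto uu_x$.
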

Our next main result is the following: when a control force is acting only on  the elliptic equation. 
\begin{theorem}[Control on elliptic equation]\label{Thm-nonlinear-Ellp}  
Let be $(a, b)\in \mathbb R^2$ with $a\neq 0$. 	Then, the system \eqref{nonlinear-elliptic}--\eqref{bd}--\eqref{in} is small-time locally null-controllable around the equilibrium $(0,0)$, that is, for any given time $T>0$,  there is a $R>0$ such that for chosen initial data $u_0\in H^{-1}(0,1)$ with $\|u_0\|_{H^{-1}(0,1)}\leq R$, there exists a control $h \in L^2((0,T)\times \omega)$  such that the associated solution $(u,v)$ satisfies 
	$$  u(T, x)=0, \quad \forall \, x \, \in  (0,1),  \qquad \limsup_{t\to T^{-}} \|v(t, \cdot)\|_{H^{-1}(0,1)} = 0 . $$
\end{theorem}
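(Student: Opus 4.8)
The plan is to run the two-stage scheme announced in the abstract, now with the control acting through the elliptic equation. First I would freeze the nonlinearity by regarding $-uu_x$ as an exterior source $f$ in the KS-KdV equation, so that the linear object to be controlled is
\begin{equation*}
	\begin{cases}
		u_t+\gamma_1 u_{xxxx}+u_{xxx}+\gamma_2 u_{xx}- a v = f,\\
		-v_{xx}+ c v - b u = \chi_{\omega} h,
	\end{cases}
\end{equation*}
whose adjoint is the backward coupled system
\begin{equation*}
	\begin{cases}
		-\varphi_t+\gamma_1 \varphi_{xxxx}-\varphi_{xxx}+\gamma_2 \varphi_{xx}- b\psi = 0,\\
		-\psi_{xx}+ c \psi - a\varphi = 0,
	\end{cases}
\end{equation*}
with $\varphi(T)=\varphi_T$ and the boundary conditions dual to \eqref{bd}. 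Since the control now sits in the $v$-equation, the relevant observation is $\psi$ localized in $\omega$, and the target is an observability inequality of the form $\|\varphi(0)\|^2\lesssim \iintQ \chi_\omega |\psi|^2$ in the norm dual to $H^{-1}(0,1)$, with the weights that encode the cost $Ke^{K/T}$. The point where the hypothesis $a\neq 0$ is essential is exactly here: a parabolic Carleman estimate for $\varphi$ first yields a local term in $\varphi$, which must then be traded for a local term in $\psi$ through the elliptic relation $-\psi_{xx}+c\psi=a\varphi$ together with an elliptic Carleman/regularity argument.

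Granting the resulting linear null-controllability with cost $Ke^{K/T}$, the second stage follows \cite{Tucsnak-nonlinear}: I would introduce a source space $\mathcal S$ of functions decaying like a prescribed weight $\rho_{\mathcal S}(t)\to 0$ as $t\to T^-$ and a slightly slower-decaying state space $\mathcal Y$ in which the controlled trajectory lives and satisfies $u(T,\cdot)=0$. The linear result then provides, for every $u_0\in H^{-1}(0,1)$ and every $f\in\mathcal S$, a control $h$ and a solution $(u,v)$ with
\begin{equation*}
	\|(u,v)\|_{\mathcal Y}+\|h\|_{L^2((0,T)\times\omega)}\le C\big(\|u_0\|_{H^{-1}(0,1)}+\|f\|_{\mathcal S}\big), \qquad u(T,\cdot)=0,
\end{equation*}
the control norm being understood in the appropriate weight. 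I would then define $\Lambda(u,v)=(\tilde u,\tilde v)$, where $(\tilde u,\tilde v)$ is the controlled trajectory produced by the source $f=-u u_x$, and seek a fixed point of $\Lambda$ in a ball $B_r\subset\mathcal Y$.

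The decisive estimate, and the step I expect to be the main obstacle, is the bilinear bound $\|u u_x\|_{\mathcal S}\le C\,\|u\|_{\mathcal Y}^2$. It forces the weights and the regularity built into $\mathcal Y$ — which comes from the fourth-order smoothing of the KS-KdV operator — to be strong enough that $u$ and $u_x$ are controlled in $L^2(0,T;H^2)$/$L^4(0,T;H^1)$-type norms (so that $uu_x\in L^2$), while the faster decay of $\rho_{\mathcal S}$ absorbs the quadratic blow-up of the product near $t=T$; the low regularity $u_0\in H^{-1}(0,1)$ also makes the behaviour near $t=0$ delicate and must be handled by that smoothing. Once this is in place, the self-map property reads $\|\Lambda(u,v)\|_{\mathcal Y}\le C\|u_0\|_{H^{-1}(0,1)}+Cr^2\le r$ for $r$ and the radius $R$ chosen small, and a parallel difference estimate $\|uu_x-\tilde u\tilde u_x\|_{\mathcal S}\le C(\|u\|_{\mathcal Y}+\|\tilde u\|_{\mathcal Y})\,\|u-\tilde u\|_{\mathcal Y}$ gives the contraction; Banach's theorem then produces the control for the nonlinear system \eqref{nonlinear-elliptic}--\eqref{bd}--\eqref{in} with $u(T,\cdot)=0$.

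Finally, the asymmetric conclusion on $v$ reflects that $v$ carries no time derivative: the fixed point gives $u(T,\cdot)=0$, and for each $t$ the component $v(t)$ is recovered from the elliptic equation $-v_{xx}+cv=bu+\chi_\omega h$ as the elliptic lift of $b u(t)+\chi_\omega h(t)$. Since $u(t)\to 0$ and the source-term control $h(t)$ carries the weight forcing it to $0$ as $t\to T^-$, elliptic regularity yields $\|v(t)\|_{H^{-1}(0,1)}\to 0$; but as $h$ is only $L^2$ in time there is no continuity of $v$ up to $T$, which is why the statement is phrased as $\limsup_{t\to T^-}\|v(t)\|_{H^{-1}(0,1)}=0$ rather than a pointwise value at $t=T$.
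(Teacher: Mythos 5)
Your architecture is exactly the paper's: (i) linear null-controllability with the control in the elliptic equation, obtained by adding a KS-KdV Carleman estimate for $\sigma$ and an elliptic Carleman estimate for $\psi$, and then using $\sigma=\frac1a\left(-\psi_{xx}+c\psi\right)$ (this is precisely where $a\neq 0$ enters) to convert the local term in $\sigma$ into a local term in $\psi$, yielding an observability inequality with cost $Ke^{K/T}$; (ii) the source term method of \cite{Tucsnak-nonlinear} with weights $\rho_0,\rho_\F$; (iii) a Banach fixed point argument (the paper iterates on the source $f\in\F$ via $\Lambda(f)=-F(u)$ rather than on trajectories, but that difference is immaterial). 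Your closing remark on why only $\limsup_{t\to T^-}\|v(t,\cdot)\|_{H^{-1}(0,1)}=0$ can be asserted (the control enters the elliptic equation and is merely $L^2$ in time, so $v$ inherits no continuity up to $T$) also matches the paper: in \Cref{Proposition-weighted-2} the component $v/\rho_0$ is estimated only in $L^2(H^2\cap H^1_0)$, in contrast with the KS-control case.

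However, there is a genuine gap at the step you yourself single out as the main obstacle, and the route you propose for it would fail. You require the bilinear estimate in a framework where $\mathcal S$ is an $L^2(L^2)$-type space, i.e.\ you want weighted $L^2(0,T;H^2)$ or $L^4(0,T;H^1)$ control of $u$ ``so that $uu_x\in L^2$''. With $u_0\in H^{-1}(0,1)$ this regularity is not available: the linear theory (\Cref{prop-esti-regular}) gives only $u\in\C^0(H^{-1})\cap L^2(H^1_0)\cap H^1((H^{3}\cap H^2_0)')$, and neither parabolic smoothing nor the weights can repair this, because the obstruction sits at $t=0$ where $\rho_0$ and $\rho_\F$ are constant and bounded below; from $u\in L^2(H^1_0)$ one gets only $uu_x\in L^1(0,T;L^2)$, not $L^2$ in time. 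The paper resolves this not by raising the regularity of $u$ but by lowering the topology of the source space: $\F$ is built on $(H^{3}(0,1)\cap H^2_0(0,1))'$, and the nonlinearity is defined weakly through $\langle F(u),\vphi\rangle=-\tfrac12\int_0^1 u^2\vphi_x$, which yields $\left\|F(u)/\rho_\F\right\|_{H^{-2}(0,1)}\leq C\left\|u/\rho_0\right\|_{H^1_0(0,1)}\left\|u/\rho_0\right\|_{H^{-1}(0,1)}$ using $H^1_0(0,1)\hookrightarrow L^\infty(0,1)$ and $\rho_0^2/\rho_\F\leq 1$ (\Cref{Remark-weights}); this closes the self-map and contraction estimates with exactly the regularity that \eqref{estimate-weighted-2} provides. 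Without this dualization (or an $L^1$-in-time source framework), your fixed point cannot be set up at the stated regularity. A secondary, smaller point: from $h/\rho_0\in L^2((0,T)\times\omega)$ alone, the $h$-contribution to $v$ decays only in an averaged (a.e.\ in $t$) sense, whereas the $u$-contribution decays pointwise since $u/\rho_0\in\C^0([0,T];H^{-1}(0,1))$; your assertion that ``elliptic regularity yields $\|v(t)\|_{H^{-1}(0,1)}\to 0$'' glosses over this distinction, which is the actual reason for the $\limsup$ formulation.
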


\paragraph{\bf Strategy of proofs}
\begin{itemize}
	\item[--] First, we shall prove the global interior null-controllability results of the associated linear models, given by    \begin{equation}
		\begin{cases}
			\label{main}
			u_t+\gamma_1 u_{xxxx}+u_{xxx}+\gamma_2 u_{xx} = a v+\chi_{\omega} h, &\text{in } Q_T,\\
			-v_{xx}+ c v= b u, & \text{in } Q_T, 
		\end{cases} 
	\end{equation}
	or, 
	\begin{equation}
		\begin{cases}
			\label{main-2}
			u_t+\gamma_1 u_{xxxx}+u_{xxx}+\gamma_2 u_{xx} = a v, &\text{in } Q_T,\\
			-v_{xx}+ c v= b u + \chi_{\omega} h, & \text{in } Q_T, 
		\end{cases} 
	\end{equation}
	with the boundary and initial conditions as given by \eqref{bd} and \eqref{in} respectively. 
	
	The global Carleman estimate will help us to prove the null-controllability of the above linearized systems with a proper control cost $C e^{C/T}$. This  is crucial to deduce the local controllability results for the non-linear models. 
	
	\item[--] Next, we shall apply the source term method introduced in \cite{Tucsnak-nonlinear}; more precisely, we prove the null-controllability of our linearized models with additional source term (in the KS equation) from some suitable Hilbert space which is exponentially decreasing while $t\to T^-$.  
	
	\item[--] After that, we use the Banach fixed-point argument to obtain the local null-controllability for the non-linear models.

\end{itemize}

\subsubsection{\bf Controllability results of the associated linearized systems}
%
\begin{theorem}\label{Thm-linear-control-KS}
Let any $u_0\in  H^{-2}(0,1)$ and $T>0$ be given and $(a, b)\in \mathbb R^2$.  Then, we have the following. 
\begin{enumerate} 
\item\label{point-1} When $b\neq 0$,  there exists a control $h\in L^2((0,T)\times \omega)$ such that the system  \eqref{main}--\eqref{bd}--\eqref{in} is null-controllable at time $T$, that is,  
$$\left(u(T,x), \,  v(T,x)\right)=(0,0), \quad \forall \, x  \in  (0,1).$$

\item\label{point-2} Similarly when $a\neq 0$, there exists a control $h\in L^2((0,T)\times \omega)$ such that the solution of \eqref{main-2}--\eqref{bd}--\eqref{in} satisfies 
\begin{align*}
u(T,x)=0, \ \ \forall \, x \in (0,1),  \qquad \limsup_{t\to T^{-}} \|v(t, \cdot)\|_{H^{-2}(0,1)} = 0 .
\end{align*}
\end{enumerate}
 In both cases, the  control function $h$ has the following estimate:
\begin{align}\label{control_estimate}
	\|h\|_{L^2((0,T)\times \omega)} \leq C e^{C/T} \|u_0\|_{H^{-2}(0,1)},
	\end{align}
where the constant $C>0$  neither depends on $T$ nor on $u_0$. 
\end{theorem}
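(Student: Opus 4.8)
The plan is to prove both assertions by duality (the Hilbert Uniqueness Method), reducing null-controllability to an observability inequality for the adjoint system and then establishing that inequality through a global Carleman estimate. Writing $L:=-\partial_{xx}+c$ with homogeneous Dirichlet conditions (boundedly invertible since $c>-\pi^2$), the backward adjoint of both \eqref{main} and \eqref{main-2} is the coupled parabolic--elliptic system
\[
-\phi_t+\gamma_1\phi_{xxxx}-\phi_{xxx}+\gamma_2\phi_{xx}=b\sigma,\qquad -\sigma_{xx}+c\sigma=a\phi,
\]
supplemented with $\phi(t,0)=\phi(t,1)=\phi_x(t,0)=\phi_x(t,1)=0$, $\sigma(t,0)=\sigma(t,1)=0$ and terminal datum $\phi(T)=\phi_T$. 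Pairing the primal equations with $(\phi,\sigma)$ and integrating by parts produces the identity $-\int_0^1u_0\,\phi(0)=\iint_{(0,T)\times\omega}h\,\phi$ in case \eqref{main}, and the same identity with $\phi$ replaced by $\sigma$ on the right in case \eqref{main-2}. Consequently, since $u_0\in H^{-2}(0,1)$, I must prove for every adjoint solution the observability estimates
\[
\|\phi(0)\|_{H^2(0,1)}^2\le Ke^{K/T}\iint_{(0,T)\times\omega}|\phi|^2 \ \ (b\neq0),\qquad \|\phi(0)\|_{H^2(0,1)}^2\le Ke^{K/T}\iint_{(0,T)\times\omega}|\sigma|^2 \ \ (a\neq0),
\]
with $K$ independent of $T$; controlling $\phi(0)$ in $H^2$ rather than $L^2$ is legitimate because, in the reversed time $\tau=T-t$, the adjoint is a well-posed forward parabolic system and hence smoothing.

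For the observability itself I would first establish two separate Carleman estimates sharing the Fursikov--Imanuvilov weights $\alpha,\xi$ built from a function $\eta$ with $|\eta_x|>0$ outside a set $\omega_0\Subset\omega$ and the singular time factor $1/(t(T-t))$. For the fourth-order operator I would adapt the KS-KdV Carleman estimate of \cite{Cerpa-Mercado-Pazoto,CE16}, bounding a weighted norm $\iint e^{-2s\alpha}\bigl(s^7\lambda^8\xi^7|\phi|^2+\dots+s\lambda\xi|\phi_{xxx}|^2\bigr)$ by $\iint e^{-2s\alpha}|{-\phi_t+\gamma_1\phi_{xxxx}-\phi_{xxx}+\gamma_2\phi_{xx}}|^2$ plus a local term $\iint_{\omega_0}e^{-2s\alpha}s^7\lambda^8\xi^7|\phi|^2$; for the elliptic operator I would use the classical estimate $\iint e^{-2s\alpha}\bigl(s^3\lambda^4\xi^3|\sigma|^2+s\lambda\xi|\sigma_x|^2\bigr)\le C\iint e^{-2s\alpha}|{-\sigma_{xx}+c\sigma}|^2+\iint_{\omega_0}(\dots)$ with the same spatial weight. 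Substituting the adjoint equations into the right-hand sides turns these source terms into $b^2\iint e^{-2s\alpha}|\sigma|^2$ and $a^2\iint e^{-2s\alpha}|\phi|^2$ respectively; adding the two inequalities with a suitably large constant in front of the elliptic one and taking $s,\lambda$ large lets the $|\sigma|^2$ term be absorbed by the elliptic left-hand side and the $|\phi|^2$ term by the fourth-order left-hand side, leaving only the local observation terms.

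The remaining issue is which local term survives and whether it matches the observation region allowed in each case. When $b\neq0$ the surviving local term is the one in $\phi$, which is exactly what the first observability estimate requires, so that case closes directly. When $a\neq0$ the control acts on the elliptic equation, so the observation must be in $\sigma$; here I still produce a local term in $\phi$ from the fourth-order estimate and must transfer it onto $\sigma$. Choosing cut-offs on nested sets $\omega_0\Subset\omega_1\Subset\omega$ and using the coupling $a\phi=-\sigma_{xx}+c\sigma$ on $\omega_1$, I would write $a\iint_{\omega_1}\rho^2|\phi|^2=\iint_{\omega_1}\rho^2\phi(-\sigma_{xx}+c\sigma)$ and integrate by parts twice to move the two $x$-derivatives onto $\rho^2\phi$, obtaining $\iint_{\omega_1}\bigl(-(\rho^2\phi)_{xx}+c\rho^2\phi\bigr)\sigma$; a Cauchy--Schwarz split then bounds the local $\phi$-term by a genuine local $\sigma$-term plus local integrals of $\phi,\phi_x,\phi_{xx}$ that are dominated by the global left-hand side for large $s,\lambda$. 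I expect this transfer, carried out at the level of the singular weights with the correct powers of $s,\lambda,\xi$ kept balanced, to be the \emph{main obstacle} of the whole argument; it is the coupled-system analogue of the second-order observation argument in \cite{CE16}.

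Finally I would pass from the joint Carleman estimate to the two observability inequalities. Restricting the weighted integrals to the middle subinterval $[T/4,3T/4]$, where $e^{-2s\alpha}$ and $\xi$ are bounded above and below by positive constants, and combining this with the standard dissipation estimate for the backward parabolic--elliptic system, I can propagate the interior information to a bound on $\|\phi(0)\|_{H^2(0,1)}$, using the smoothing of the reversed-time equation to gain the two derivatives. Tracking the $T$-dependence of the weights, the observability constant takes the form $\exp\bigl(C(1+1/T)\bigr)$ with $C$ independent of $T$; since this is bounded by $Ke^{K/T}$ uniformly in $T>0$, I obtain exactly the cost \eqref{control_estimate}. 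Because this $T$-independence of $K$ is essential for the subsequent source-term method and fixed-point argument, I would take care that no factor of the form $e^{CT}$ enters, which is guaranteed by keeping the time weight equal to $1/(t(T-t))$ and not letting the large parameters $s,\lambda$ depend on $T$.
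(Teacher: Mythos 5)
Your overall architecture---duality/HUM, two separate Carleman estimates with Fursikov--Imanuvilov weights, gluing them through the couplings, then an observability inequality with cost $e^{C(1+1/T)}$---is the same as the paper's, and your treatment of the $a\neq 0$ case (transferring the local observation in the fourth-order component onto the elliptic one via $a\phi=-\sigma_{xx}+c\sigma$, nested cutoffs $\omega_0\subset\subset\omega_1\subset\subset\omega$, two spatial integrations by parts, Cauchy--Schwarz and absorption for large $s,\lambda$) is exactly what the paper does in \Cref{Thm_Carleman_main 2}. The genuine gap is in the case $b\neq 0$, which you claim ``closes directly''; it does not. After adding the two Carleman estimates, \emph{both} local observation terms survive on the right-hand side: the one in $\phi$ and the one in the elliptic component, $s^3\lambda^4\int_0^T\int_{\omega_0}e^{-2s\alpha}\xi^3|\sigma|^2$. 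The latter cannot be absorbed by the left-hand side, because it appears there with exactly the same weight $s^3\lambda^4\xi^3$, merely integrated over all of $Q_T$; and it cannot be converted into a local $\phi$-term through the elliptic coupling, since $\sigma=aL^{-1}\phi$ is nonlocal. The only local relation available is the \emph{first} adjoint equation, $\sigma=\frac{1}{b}\left(-\phi_t+\gamma_1\phi_{xxxx}-\phi_{xxx}+\gamma_2\phi_{xx}\right)$ (this is precisely where $b\neq 0$ enters), and pushing the local elliptic term through it is the hardest part of the paper's proof of \Cref{Thm_Carleman_main}: it requires a cutoff, integrations by parts in both $t$ and $x$, the time-differentiated elliptic equation together with the boundedness of $(-\partial_{xx}+cI)^{-1}$ to control the $\phi_t$-contribution, and then a \emph{second} cutoff and integration-by-parts round to absorb the surviving local term in $\phi_{xx}$, which inflates the final observation weight to $s^{15}\lambda^{16}\xi^{15}|\phi|^2$. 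You correctly identified this type of transfer as the main obstacle of the whole argument, but attached it to the wrong case: for $a\neq 0$ the coupling used in the transfer is purely elliptic (no time derivative, only two integrations by parts), and it is the easier of the two. As written, your $b\neq 0$ case is incomplete.

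Two further repairs are needed. First, in case (2) the theorem asserts not only $u(T,\cdot)=0$ but also $\limsup_{t\to T^-}\|v(t,\cdot)\|_{H^{-2}(0,1)}=0$, and this does not follow from plain HUM: since $v(t)=L^{-1}\left(bu(t)+\chi_\omega h(t)\right)$, a control that is merely $L^2$ in time gives no decay of $v$ at $t=T$. One needs a control vanishing as $t\to T^-$, which the paper obtains from a weighted observability inequality (a factor $e^{-K'/(T-t)}$ inside the observation term) combined with the argument of \cite[Theorem 3.2]{FC13}; your proposal never addresses this. Second, your closing remark that the cost $Ke^{K/T}$ is guaranteed by ``not letting the large parameters $s,\lambda$ depend on $T$'' is inaccurate: the Carleman parameter must satisfy $s\geq \mu_0(T+T^2)$, so it necessarily depends on $T$; what matters is that $\mu_0$ and $\lambda_0$ are $T$-independent, so that after fixing $s=\mu_0(T+T^2)$ every exponential factor arising from the weights is of the form $e^{Cs/T^2}=e^{C\mu_0(1+1/T)}\leq Ke^{K/T}$. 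The remainder of your outline (the duality identity, restriction of the weights to a middle time interval plus energy estimates for the time-truncated adjoint to reach $\|\phi(0)\|_{H^2}$) is sound and matches the paper's \Cref{prop:refined_a_0} and \Cref{prop:refined}.
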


\begin{remark}
	Note that, we need the initial  data $u_0$ to be at least in $H^{-1}(0,1)$ to study the null-controllability of the nonlinear systems whereas to study the linearized models, $u_0\in H^{-2}(0,1)$ is fine. The reason is that we need some more regularity of the set of solutions $(u,v)$ to conclude the required local controllability results for the systems  \eqref{nonlinear-KS} or \eqref{nonlinear-elliptic}; see Section \ref{Section-nonlinear-KS}.
\end{remark}

\smallskip 

\paragraph{\bf Notations}
Throughout the paper, $C$ denotes the generic positive constant which may depend on $\gamma_1, \gamma_2$, $a,b,c$ but does not depend on $u_0$ or $T$, and change from line to line. 

We sometimes use the short notations $H^m(X)$ or $\C^0(X)$ to denote the spaces $H^m(0, T; X)$ and $\C^0([0,T]; X)$ respectively, for some $m\in \zahl$ and $X$ is a Lebesgue space.

\medskip 

\paragraph{\bf Paper organization} The  paper is organized as follows:
\begin{itemize}
	\item [--] In Section \ref{Section-well-posed}, we just point out  the well-posedness of the linearized control systems, which is more or less standard.
	
	\item[--] Section \ref{sec-car1} is	devoted to study the null-controllability of our parabolic-elliptic model where a localized control is acting on the KS-KdV equation. 
	Thereafter, in Section \ref{Section-elliptic}, we study the case  when a control force acts in the elliptic equation only. In both cases, the controllability of the associated linearized models will be established by global Carleman estimates and then using the source term method followed by a fixed point argument, we shall study the nonlinear systems; see Section \ref{Section-nonlinear-KS} for more details.

	 \item[--] Finally, we conclude our paper  with some remarks and open questions related to the KS-KdV-elliptic system, given by Section \ref{Sec-Conclusion}. 
\end{itemize}

\section{Well-posedness of the linearized systems}\label{Section-well-posed}
This section is devoted to discuss the well-posedness of the linear systems \eqref{main}/\eqref{main-2}--\eqref{bd}--\eqref{in}. Let us first write the system \eqref{main} in infinite dimensional ODE setup:
\begin{equation}\label{eq:main abs}
	\begin{cases}
		u_t=\mc Au+ \mc Bh , \quad t\in (0,T),
		\\u(0)=u_0,
	\end{cases}
\end{equation}
where the operators $\mc A$, $\mc B$ are given by 
\begin{equation}
	\begin{cases}\label{eq:operator}\mc Au =-\gamma_1 u_{xxxx}-u_{xxx}- \gamma_2 u_{xx}+ a v,\\
			-v_{xx}+c v= bu, \text{ with } \ v(0)=0,\, v(1)=0,
	\end{cases}
\end{equation} 
and $$D(\mc A)=\big\{u\in H^4(0,1): u(0)=u(1)=u_x(0)=u_x(1)=0\big\}.$$
The control operator $\mc B\in \mathcal{L}\left(\mathbb{R}, L^2(0,1)\right)$ can be defined as 
\begin{equation*} 
	\mc B^*h(t)=\chi_{\omega}h(t, \cdot) . 
\end{equation*}
Next, one can find the adjoint operator $\mc A^*$ of $\mc A$ with  $D(\mc A^{*})=D(\mc A)$ as follows:
\begin{equation}\label{ad}
	\begin{cases}\mc A^*{\sigma}=-\gamma_1 \sigma_{xxxx}+\sigma_{xxx}-\gamma_2 \sigma_{xx}+ b\psi,\\
		-\psi_{xx}+c\psi=a\sigma, \text{ with }  \ \psi(0)=0, \, \psi(1)=0.
	\end{cases}
\end{equation}
Therefore, one has the following adjoint system to \eqref{main}/\eqref{main-2}--\eqref{bd}--\eqref{in}, given by
 \begin{equation}\label{Adjoint-system}
 	\begin{cases}
 		-\sigma_t + \gamma_1 \sigma_{xxxx} - \sigma_{xxx}  +  \gamma_2 \sigma_{xx}=  b \psi   + g, & (t,x) \in Q_T, \\
 		-\psi_{xx}+c\psi=a \sigma  , & (t,x) \in Q_T,\\
 		\sigma(t, 0)=0,\quad \sigma(t, 1)=0,  & t \in (0, T),\\
 		\sigma_x(t,0)=0,\quad \sigma_x(t,1)=0, & t \in (0, T),\\
 		\psi(t, 0)=0,\quad \psi(t, 1)=0, & t \in (0, T),\\
 		\sigma(T,x)=\sigma_T(x), & x \in (0,1),
 	\end{cases}
 \end{equation}
with given final data $\sigma_T \in H^2_0(0,1)$ and right hand sides $g\in L^2(0,T; L^2(0,1))$. 

With these, we have the following proposition. 
\begin{proposition}\label{Prop-energy}
	For given 
 $\sigma_T \in H^2_0(0,1)$ and $g\in L^2(0,T; L^2(0,1))$ or $g\in L^1(0,T; H^2_0(0,1))$, there exists unique solution $(\sigma, \psi)$ to \eqref{Adjoint-system} such that 
\begin{align*}
&	\sigma \in \C^0([0,T]; H^2_0(0,1)) \cap L^2(0,T; H^4(0,1)) \cap H^1(0,T; L^2(0,1)) , \\
	& \psi \in L^2(0,T; H^2(0,1)\cap H^1_0(0,1)), 
\end{align*}
and in addition,  they satisfy the following estimates 
\begin{subequations}
	\begin{align}
	\label{adj-solu-1}	
	&\|\sigma\|_{\C^0(H^2_0) \cap L^2(H^4) \cap H^1(L^2)  } \leq C e^{C T} \left( \|\sigma_T\|_{H^2_0(0,1)} + \|g\|_{L^2(L^2)}    \right) , \\
	\label{adj-solu-2}	
	&\|\psi\|_{L^2(H^2 \cap H^1_0)} \leq C e^{C T} \left( \|\sigma_T\|_{H^2_0(0,1)} + \|g\|_{L^2(L^2)}    \right),
	\end{align}
\end{subequations}
or, 
\begin{subequations}
	\begin{align}
		\label{adj-solu-1-1}	
		&\|\sigma\|_{\C^0(H^2_0) \cap L^2(H^4) \cap H^1(L^2)  } \leq  C e^{C T} \left( \|\sigma_T\|_{H^2_0(0,1)} + \|g\|_{L^1(H^2_0)}    \right) , \\
		\label{adj-solu-2-1}	
		&\|\psi\|_{L^2(H^2 \cap H^1_0)} \leq   C e^{C T} \left( \|\sigma_T\|_{H^2_0(0,1)} + \|g\|_{L^1(H^2_0)}    \right) , 
	\end{align}
\end{subequations}
where $C>0$ is some constant that may depend on $a, b, c, \gamma_1, \gamma_2$ but not on $u_0$ or $T$.  
\end{proposition}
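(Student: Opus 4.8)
The plan is to eliminate the elliptic unknown $\psi$ and reduce the adjoint system \eqref{Adjoint-system} to a single fourth-order parabolic equation for $\sigma$, which I then treat by analytic semigroup theory. First I would exploit that $c>-\pi^2$ is strictly larger than the first Dirichlet eigenvalue of $-\partial_{xx}$, so the operator $-\partial_{xx}+c$ with homogeneous Dirichlet conditions is boundedly invertible; hence the relation $-\psi_{xx}+c\psi=a\sigma$ defines a bounded solution operator $\mc S:=(-\partial_{xx}+c)^{-1}$ with $\psi=a\,\mc S\sigma$ and the elliptic estimate $\norm{\psi}_{H^2\cap H^1_0}\leq C\norm{\sigma}_{L^2}$. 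The coupling term then reads $b\psi=ab\,\mc S\sigma$, where $\mc S$ is bounded (indeed compact and smoothing) on $L^2(0,1)$. Reversing time via $\tau=T-t$ turns \eqref{Adjoint-system} into the forward problem $\partial_\tau\sigma+\mc A_0\sigma-ab\,\mc S\sigma=g$, $\sigma(0)=\sigma_T$, with clamped boundary conditions, where $\mc A_0\sigma:=\gamma_1\sigma_{xxxx}-\sigma_{xxx}+\gamma_2\sigma_{xx}$ on $D(\mc A_0)=\{\sigma\in H^4(0,1):\sigma(0)=\sigma(1)=\sigma_x(0)=\sigma_x(1)=0\}$.

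The key step is to show that $-\mc A_0$ generates an analytic semigroup on $L^2(0,1)$. Its leading part $\gamma_1\partial_{xxxx}$ with these clamped conditions is self-adjoint, positive (with form $\int_0^1\sigma_{xx}^2$ and form domain $H^2_0(0,1)$) and has compact resolvent, while the lower-order terms $-\partial_{xxx}+\gamma_2\partial_{xx}$ are relatively bounded with relative bound zero by interpolation, so sectoriality is preserved (after a harmless spectral shift). Since $\mc S$ is bounded on $L^2$, the perturbation $-ab\,\mc S$ is a bounded perturbation and does not destroy analyticity; moreover, as a bounded perturbation it preserves maximal $L^2$-regularity whenever $\mc A_0$ has it. Thus $-(\mc A_0-ab\,\mc S)$ generates an analytic semigroup enjoying maximal regularity.

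For the estimate \eqref{adj-solu-1} I would invoke maximal $L^2$-regularity (de Simon's theorem applies, the analytic semigroup living on the Hilbert space $L^2$): for $g\in L^2(0,T;L^2)$ and initial datum in the interpolation/trace space $(L^2,D(\mc A_0))_{1/2,2}=H^2_0(0,1)$, the solution lies in $L^2(0,T;H^4)\cap H^1(0,T;L^2)$ and, by interpolation, in $\C^0([0,T];H^2_0)$, with the stated bound; the identification of the trace space with $H^2_0$ follows from the form domain of the clamped biharmonic operator. The companion bound \eqref{adj-solu-2} for $\psi$ is then immediate from $\psi=a\,\mc S\sigma$ and the elliptic estimate integrated in time.

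Finally, for the case $g\in L^1(0,T;H^2_0)$ yielding \eqref{adj-solu-1-1}--\eqref{adj-solu-2-1}, I would use the smoothing of the analytic semigroup together with $H^2_0=D(\mc A_0^{1/2})$: writing the mild solution by Duhamel and applying the standard bounds $\norm{\mc A_0^{1/2}e^{-\tau\mc A_0}}_{\mc L(L^2)}\leq C\tau^{-1/2}$ and $\norm{\mc A_0 e^{-\tau\mc A_0}}_{\mc L(L^2)}\leq C\tau^{-1}$, the convolution of an $L^1$-in-time, $H^2_0$-valued source reproduces the same regularity class. The main obstacle I anticipate is precisely this last point: extracting the full $L^2(H^4)\cap H^1(L^2)\cap\C^0(H^2_0)$ regularity from a source that is only $L^1$ in time requires carefully using that $g$ takes values in the half-power space $H^2_0$, and is less automatic than the $L^2$--$L^2$ maximal-regularity estimate. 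Everything else (the elliptic solve, sectoriality of the fourth-order operator, and the bounded coupling perturbation) is routine.
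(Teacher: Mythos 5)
For the case $g\in L^2(0,T;L^2(0,1))$ your argument is correct, but it takes a genuinely different route from the paper's. The paper stays entirely inside the energy method: it tests the elliptic equation with $\psi$ and with $\psi_{xx}$ to get $\norm{\psi(t,\cdot)}_{H^2\cap H^1_0}\leq C\norm{\sigma(t,\cdot)}_{L^2}$, then reverses time and tests the fourth-order equation with $\sigma_{xxxx}$, absorbs $\sigma_{xxx}$ by Ehrling's lemma, closes with Gronwall, and finally reads $\sigma_t\in L^2(0,T;L^2)$ off from the equation. You instead eliminate $\psi=a\,\mc{S}\sigma$ with $\mc{S}=(-\partial_{xx}+c)^{-1}$ (legitimate since $c>-\pi^2$), verify sectoriality of the clamped fourth-order operator (relative bound zero for $-\partial_{xxx}+\gamma_2\partial_{xx}$, bounded perturbation $ab\,\mc{S}$), and invoke de Simon's maximal $L^2$-regularity with the trace-space identification $(L^2,D(\mc{A}_0))_{1/2,2}=D(\mc{A}_0^{1/2})=H^2_0$. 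Both proofs are complete for \eqref{adj-solu-1}--\eqref{adj-solu-2}; your route delivers uniqueness and the optimality of $H^2_0$ for the final datum essentially for free, while the paper's elementary estimates transfer verbatim to the $L^1(H^2_0)$ case and to the $\veps$-uniform estimates of Section~\ref{Section-Uniform}, where semigroup constants would otherwise have to be tracked in $\veps$.

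The genuine gap is exactly where you anticipated it, and the tools you name cannot close it. Estimating the Duhamel term through the pointwise bounds $\norm{\mc{A}_0^{1/2}e^{-r\mc{A}_0}}\leq Cr^{-1/2}$, $\norm{\mc{A}_0 e^{-r\mc{A}_0}}\leq Cr^{-1}$ reduces the $L^2(0,T;H^4)$ bound to the scalar inequality $\norm{k*\phi}_{L^2(0,T)}\lesssim\norm{\phi}_{L^1(0,T)}$ with $k(r)=r^{-1/2}$ and $\phi(s)=\norm{\mc{A}_0^{1/2}g(s)}_{L^2}$. Since $k$ lies only in weak-$L^2$, this endpoint Young inequality is false; the desired estimate is nevertheless \emph{true}, but any proof must retain the semigroup structure that the reduction to operator norms discards. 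Within your framework the repair is the square-function estimate combined with Minkowski's integral inequality,
\begin{equation*}
	\norm{\mc{A}_0\!\int_0^{\boldsymbol{\cdot}} e^{-(\boldsymbol{\cdot}-s)\mc{A}_0}g(s)\,ds}_{L^2(0,T;L^2)}
	\leq \int_0^T\Big(\int_0^{\infty}\norm{\mc{A}_0^{1/2}e^{-r\mc{A}_0}\mc{A}_0^{1/2}g(s)}_{L^2}^2\,dr\Big)^{1/2}ds
	\leq \tfrac{1}{\sqrt{2}}\int_0^T\norm{\mc{A}_0^{1/2}g(s)}_{L^2}\,ds,
\end{equation*}
valid by the spectral theorem for the self-adjoint principal part (the skew third-order term and the coupling then have to be handled perturbatively). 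The paper's mechanism is more elementary and is precisely what your convolution bound misses: since $g(t)\in H^2_0$, one integrates by parts twice in the source term, $\int_0^1 g\,\sigma_{xxxx}=\int_0^1 g_{xx}\,\sigma_{xx}\leq\norm{g}_{H^2_0}\norm{\sigma_{xx}}_{L^2}$, and Gronwall applied to $\norm{\sigma_{xx}}_{L^2}$ gives the $\C^0(H^2_0)$ and $L^2(H^4)$ bounds with only $\norm{g}_{L^1(H^2_0)}$ on the right.

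Be aware, finally, that the $H^1(0,T;L^2)$ component of \eqref{adj-solu-1-1} cannot be obtained by any method: from the equation, $\sigma_t$ differs from $g$ by terms bounded in $L^2(0,T;L^2)$, and $L^1(0,T;H^2_0)\not\subset L^2(0,T;L^2)$ (take $g(t,x)=\phi(x)\,|t-T/2|^{-3/4}$ with $0\neq\phi\in H^2_0$), so $\sigma_t\notin L^2(0,T;L^2)$ for such $g$. This is a defect of the statement itself --- the paper disposes of the $L^1$ case in a single unproved sentence, and its later duality argument only uses the $\C^0(H^2_0)$ bound --- so your hesitation was well founded, but the honest conclusion is stronger: the $L^1$ part of the proposition should either drop the $H^1(L^2)$ norm or strengthen the hypothesis on $g$, and the parts that are true require either the square-function argument above or the paper's integration-by-parts trick, not the pointwise smoothing bounds you proposed.
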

\begin{proof}

	We start the proof with the estimate of the elliptic equation. 
	Multiplying the second equation of \eqref{Adjoint-system} by $\psi$, then integrating by parts and considering  the homogeneous boundary conditions on $\psi$,  we obtain
	\begin{align*}
		\int_{0}^{1}\psi_x^2+c\int_{0}^{1}\psi^2=a\int_{0}^{1}\sigma \psi.
	\end{align*}
Recall the Poincar\'{e} inequality with best constant:
\begin{equation}\label{poin} 
	\int_{0}^{1}|\phi|^2\leq \frac{1}{\pi^2}\int_{0}^{1}|\phi'|^2, \quad \forall \phi \in H^1_0(0,1).
\end{equation}
Then, using the Cauchy-Schwartz and Young's  inequality together with \eqref{poin},  we  deduce
	\begin{align*}
		\norm{\psi(t, \cdot)}_{H^1_0(0,1)}&\leq C \norm{\sigma(t, \cdot)}_{L^2(0,1)}.
	\end{align*}

	Next, multiplying the second equation of \eqref{Adjoint-system} by $\psi_{xx}$ and performing the same business as above, we get the following estimate:
	\begin{align*}\norm{\psi(t, \cdot)}_{H^2(0,1)\cap H^1_0(0,1)}^2 \leq C \norm{\sigma(t, \cdot)}_{L^2(0,1)}^2, \text{ for a.e. t in } [0,T].
	\end{align*}
	Then, integrating on $[0,T]$, we have
	\begin{equation*}
		\norm{\psi}_{L^2(0,T;H^2(0,1)\cap H^1_0(0,1))}^2 \leq C \norm{\sigma}_{L^2(0,T; L^2(0,1))}^2.
	\end{equation*}

\smallskip 

To obtain the regularity result for $\sigma$, let us make a change of variable from $t$ to $T-t$ so that the first equation of \eqref{Adjoint-system} becomes forward in time. Then, testing the  equation against $\sigma_{xxxx}$, we get 
	\begin{align}\label{equ-mult}
		\int_0^1  \sigma_t \sigma_{xxxx} + \gamma_1 \int_0^1 |\sigma_{xxxx}|^2 = \int_0^1 \left( b\psi + \sigma_{xxx} -\gamma_2 \sigma_{xx} + g\right) \sigma_{xxxx} .	 
	\end{align}
	Performing consecutive  integration by parts  in   the first integral and using the Young's inequality in the right hand side of  \eqref{equ-mult}, leads to
	\begin{align*}
		\frac{d}{d t} \int_0^1 |\sigma_{xx}|^2 + \gamma_1 \int_0^1 |\sigma_{xxxx}|^2 \leq \epsilon  \int_0^1 |\sigma_{xxxx}|^2 + \frac{C}{\epsilon} \int_0^1 \left(|\psi|^2 + |\sigma_{xxx}|^2 + |\sigma_{xx}|^2 + |g|^2 \right) .
	\end{align*}
	Using the estimate of $\psi$ from \eqref{adj-solu-2} and choosing $\epsilon=\gamma_1/2$, we get 
	\begin{align}\label{aux-energy}
	\frac{d}{d t} \int_0^1 |\sigma_{xx}|^2 + \frac{\gamma_1}{2} \int_0^1 |\sigma_{xxxx}|^2 \leq  C \int_0^1 \left(|\sigma|^2 + |\sigma_{xxx}|^2 + |\sigma_{xx}|^2 + |g|^2 \right) .
		\end{align} 
	Thanks to the Ehrling’s lemma, we have for $\epsilon>0$, 
	\begin{align*}
		\int_0^1 |\sigma_{xxx}|^2 \leq \epsilon 	\int_0^1 |\sigma_{xxxx}|^2 + K(\epsilon)\int_0^1 |\sigma|^2,
	\end{align*}
and applying this in \eqref{aux-energy} for  $\epsilon>0$ small enough and the Poincar\'e inequality,   we have  
\begin{align}\label{aux-energy-2}
	\frac{d}{d t} \int_0^1 |\sigma_{xx}|^2 +  \int_0^1 |\sigma_{xxxx}|^2 \leq  C \int_0^1 \left(|\sigma_{xx}|^2 + |g|^2 \right) .
\end{align} 
Then the Gronwall's lemma yields to
\begin{align*}
	\|\sigma\|_{L^{\infty}(0,T; H^2_0(0,1) )} \leq  C e^{C T} \left(\|\sigma_T\|_{H^2_0(0,1)} + \|g\|_{L^2(0,T; L^2(0,1))} \right) .
\end{align*}   
Next, integrating both sides of the inequality \eqref{aux-energy-2} with respect to $(0,t)$ and taking supremum on the both sides for $t\in [0,T]$, we  shall obtain the $L^2(0,T; H^4(0,1))$ estimate of $\sigma$ and finally from the equation of $\sigma$, we obtain $\sigma_t\in L^2(0,T; L^2(0,1))$.  The estimate of $\sigma_t$ can be obtained using the $L^\infty(H^2_0)$ and $L^2(H^4)$ estimations of $\sigma$. This ensures that $\sigma \in \C^0([0,T]; H^2_0(0,1)).$

\medskip

On the other hand, assuming $g\in L^1(0, T; H^2_0(0,1))$, one can obtain  the  estimates \eqref{adj-solu-1-1} and \eqref{adj-solu-2-1}.  
\end{proof}


Next let us give the well-posedness of the linearized system \eqref{main}--\eqref{bd}--\eqref{in} and \eqref{main-2}--\eqref{bd}--\eqref{in}. 
We only write the details  for the system \eqref{main}--\eqref{bd}--\eqref{in}. For a second order parabolic-elliptic control problem, similar well-posedness results have been appeared in \cite{hernandez2021boundary}.

\begin{definition}
	Let be $u_0\in H^{-2}(0,1)$ and $h\in L^2(0,T; H^{-2}(0,1))$. Then,  $(u,v)\in [L^2(0,T; L^2(0,1))]^2$ is said to be a solution to the system \eqref{main}--\eqref{bd}--\eqref{in} if for any $g\in L^2(0,T; L^2(0,1))$ the following  holds:
	\begin{align}\label{defn of soln}
		\Iint u(t,x) g(t,x)=&\ip{u_0}{\sigma(0, x)}_{H^{-2}(0,1), H^2_0(0,1)}
		+\ip{h}{\sigma}_{L^2(0,T; H^{-2}(0,1)), L^2(0,T; H_{0}^{2}(0,1))},
	\end{align}
where $(\sigma, \psi)$ is a solution of the adjoint system \eqref{Adjoint-system} with $\sigma_T=0$. 
\end{definition}

Moreover, we have the following result. 
\begin{theorem}
Let $u_0\in H^{-2}(0,1)$ and $h\in L^2(0,T; H^{-2}(0,1)).$ Then the system \eqref{main}--\eqref{bd}--\eqref{in} has a unique solution $(u,v) \in  [\C^0([0,T]; H^{-2}(0,1))]^2 \cap [L^2(0,T; L^2(0,1))]^2$ with the following estimate:
\begin{equation}\label{uv}
	\norm{(u,v)}_{[\C^0(0,T; H^{-2}(0,1))]^2}  + 	\norm{(u,v)}_{[L^2(0,T; L^2(0,1))]^2}   \leq C e^{C T} \left(\norm{u_0}_{H^{-2}(0,1)}+ \norm{h}_{L^2(0,T; H^{-2}(0,1))}\right).
\end{equation}
\end{theorem}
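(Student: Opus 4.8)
The plan is to construct the solution by the transposition (duality) method dictated by the Definition above, and then to upgrade its time regularity by an interpolation argument. First I would \emph{construct $u$}. Given $g\in L^2(0,T;L^2(0,1))$, let $(\sigma,\psi)$ be the solution of the adjoint system \eqref{Adjoint-system} with $\sigma_T=0$; Proposition \ref{Prop-energy} guarantees its existence together with the bound \eqref{adj-solu-1}, so in particular $\sigma(0)\in H^2_0(0,1)$ and $\sigma\in L^2(0,T;H^2_0(0,1))$, with
$$\|\sigma(0)\|_{H^2_0(0,1)}+\|\sigma\|_{L^2(0,T;H^2_0(0,1))}\le C\,\|g\|_{L^2(0,T;L^2(0,1))}.$$
Consider the linear functional $\Lambda:g\mapsto \ip{u_0}{\sigma(0)}_{H^{-2}(0,1),H^2_0(0,1)}+\ip{h}{\sigma}_{L^2(H^{-2}),L^2(H^2_0)}$. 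Since $\sigma$ depends linearly on $g$, $\Lambda$ is linear, and Cauchy--Schwarz combined with the displayed bound gives
$$|\Lambda(g)|\le C\big(\|u_0\|_{H^{-2}(0,1)}+\|h\|_{L^2(0,T;H^{-2}(0,1))}\big)\,\|g\|_{L^2(0,T;L^2(0,1))}.$$
As $L^2(0,T;L^2(0,1))$ is a Hilbert space, the Riesz representation theorem yields a unique $u$ in this space realizing $\Lambda$, i.e. satisfying \eqref{defn of soln} for every $g$, with $\|u\|_{L^2(L^2)}\le C(\|u_0\|_{H^{-2}}+\|h\|_{L^2(H^{-2})})$. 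Uniqueness of $u$ is then immediate, since two solutions would give $\iintQ(u-\wtil{u})g=0$ for all $g$.

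Next I would \emph{recover $v$}. For a.e. $t$, define $v(t,\cdot)$ as the unique solution of $-v_{xx}+cv=bu(t,\cdot)$ with $v(t,0)=v(t,1)=0$; this problem is well posed because $c>-\pi^2$, and elliptic regularity gives $\|v(t,\cdot)\|_{H^2\cap H^1_0}\le C\|u(t,\cdot)\|_{L^2}$, so that $v\in L^2(0,T;H^2\cap H^1_0)\hookrightarrow L^2(0,T;L^2)$ with the required estimate. One checks that this $v$ is consistent with \eqref{defn of soln}: pairing the $u$-equation with $\sigma$ and integrating by parts, the terms $\iintQ b\psi u$ and $\iintQ a v\sigma$ cancel precisely because $-\psi_{xx}+c\psi=a\sigma$ and $-v_{xx}+cv=bu$, which is exactly the structure that makes the transposition identity close on $u$ alone.

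Finally, the \emph{main obstacle is the time-continuity} $(u,v)\in[\C^0([0,T];H^{-2}(0,1))]^2$, which the transposition construction does not deliver directly. I would first recover the equation in the distributional sense by testing \eqref{defn of soln} against $g$ compactly supported in $(0,T)\times(0,1)$, thereby identifying $u_t=-\gamma_1 u_{xxxx}-u_{xxx}-\gamma_2 u_{xx}+av+h$. Since $u\in L^2(L^2)$ forces $u_{xxxx}\in L^2(0,T;H^{-4}(0,1))$, while $av\in L^2(L^2)$ and $h\in L^2(H^{-2})$, we obtain $u_t\in L^2(0,T;H^{-4}(0,1))$. Combining $u\in L^2(0,T;L^2(0,1))$ with $u_t\in L^2(0,T;H^{-4}(0,1))$ and the interpolation identity $[L^2,H^{-4}]_{1/2}=H^{-2}$, the standard Lions--Magenes embedding yields $u\in \C^0([0,T];H^{-2}(0,1))$ with the stated bound; elliptic regularity applied to $-v_{xx}+cv=bu$ with $u\in\C^0(H^{-2})$ then gives $v\in\C^0([0,T];L^2(0,1))\hookrightarrow\C^0([0,T];H^{-2}(0,1))$.

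The delicate points are the justification that the transposition solution is a genuine distributional solution (so that $u_t$ may legitimately be read off the equation) and the bookkeeping of the negative-order spaces and their interpolation against the underlying boundary conditions. Should the interpolation route prove awkward, an alternative is to establish the estimate first for smooth data $(u_0,h)$, for which $u$ is classical and continuous in time, via a time-localized duality argument — solving the adjoint backward from each $t\in[0,T]$ with final datum in $H^2_0(0,1)$ and applying Proposition \ref{Prop-energy} on $(0,t)$ to bound $\|u(t,\cdot)\|_{H^{-2}}$ uniformly — and then to pass to the limit in $\C^0(H^{-2})$ using the linear estimate applied to differences of approximating data.
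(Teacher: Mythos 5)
Your construction of $u$ is exactly the paper's: the same transposition functional, bounded via Proposition \ref{Prop-energy}, and Riesz representation on $L^2(0,T;L^2(0,1))$ (you are in fact more careful than the paper in noting that Riesz only delivers $u$, with $v$ then recovered from the elliptic equation, and your cancellation argument showing consistency of this $v$ with the duality identity is correct). Where you genuinely diverge is the upgrade to $\C^0([0,T];H^{-2}(0,1))$. The paper runs the duality argument a second time, now with test sources $g\in L^1(0,T;H^2_0(0,1))$ and the corresponding adjoint estimate \eqref{adj-solu-1-1} (this is precisely why Proposition \ref{Prop-energy} treats both classes of $g$), which gives $u\in L^\infty(0,T;H^{-2}(0,1))$ with the stated bound; continuity is then obtained by the ``standard density argument'' (smooth data give continuous solutions, and one passes to the uniform limit) --- essentially your fallback option. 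You instead identify the equation distributionally, read off $u_t\in L^2(0,T;H^{-4}(0,1))$, and invoke the Lions--Magenes embedding with $[L^2,H^{-4}]_{1/2}=H^{-2}$. This route is valid and has the advantage of producing continuity in one step, without the $L^1(H^2_0)$ adjoint estimate and without a separate density argument; its price is that you must justify the interpolation identity in negative-order spaces with boundary conditions (i.e. $[L^2,(H^4_0)']_{1/2}=(H^2_0)'$, via duality and $[H^4_0,L^2]_{1/2}=H^2_0$, which holds since the order $2$ avoids the half-integer exceptional cases) and carry out the distributional identification, both of which you correctly flag as the delicate points. Either approach yields the theorem; yours trades the paper's extra adjoint estimate for interpolation machinery.
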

\begin{proof}
	Let us define a linear map $\mathcal{L}: L^2(0,T; L^2(0,1))\mapsto \mathbb{R}$ by 
	\begin{equation*}
	\mathcal{L}(g)= \ip{u_0}{\sigma(0, x)}_{H^{-2}(0,1), H^2_0(0,1)}+\ip{h}{\sigma}_{L^2(0,T; H^{-2}(0,1)), L^2(0,T; H_{0}^{2}(0,1))},
	\end{equation*}
where $(\sigma,\psi)$ is the solution of the equation \eqref{Adjoint-system}. Thanks to \Cref{Prop-energy}, $\mathcal{L}$ is a continuous linear functional. Thus, by the Riesz-representation theorem, we have a unique $(u,v)\in [L^2(0,T; L^2(0,1))]^2$ such that \eqref{defn of soln} holds.

\smallskip 
Next, if we define the map $\mathcal{L}:L^1(0, T; H^2_0(0,1)) \mapsto \mathbb{R}$ by \eqref{l}, then using estimate \eqref{adj-solu-1-1}, we get the continuity of $\mathcal{L}$. This yields that $u\in L^{\infty}(0, T; H^{-2}(0,1))$ satisfying \eqref{defn of soln} and consequently, $v \in L^\infty(0,T; H^{-2}(0,1))$. 
Indeed, we have the following estimate
\begin{equation}\label{l}
	\norm{(u,v)}_{L^{\infty}(0,T; H^{-2}(0,1)\times H^{-2}(0,1) )}\leq  C e^{C T}\left(\norm{u_0}_{H^{-2}(0,1)}+ \norm{h}_{L^2(0,T; H^{-2}(0,1))}\right).
\end{equation}
Using standard density arguments, we get the estimate \eqref{uv}. Indeed, first taking the initial data regular enough and using the equation \eqref{main}, it can be shown that $u_t\in L^2(0,T; H^{-4}(0,1)).$ Using $u\in L^2(0,T;L^2(0,1))$, by classical regularity result, one can conclude that $u\in \C^0([0,T]; H^{-2}(0,1)),$ and hence 
$v\in \C^0([0,T]; H^{-2}(0,1))$. For the less regular initial data, standard density argument and the estimate \eqref{l} will give the desired result. Similar arguments can be found in \cite[Theorem 2.4]{Cerpa-Mercado-Pazoto}.
\end{proof}

Analogous to the previous theorem,  one can prove the following result when a  control is acting in the elliptic equation.
\begin{theorem}
	Let $u_0\in H^{-2}(0,1)$ and $h\in L^2(0,T; H^{-2}(0,1)).$ Then the system \eqref{main-2}--\eqref{bd}--\eqref{in} has a unique solution $(u,v)$ such that $u\in  \C^0([0,T]; H^{-2}(0,1))\cap L^2(0,T; L^2(0,1))$, $v  \in L^2(0,T; L^2(0,1))$, and in addition, one has 
	\begin{align*}
		\norm{u}_{\C^0([0,T];H^{-2}(0,1)) \cap L^2(0,T; L^2(0,1))}  + 	\norm{v}_{L^2(0,T; L^2(0,1))}   \leq C e^{C T} \left(\norm{u_0}_{H^{-2}(0,1)}+ \norm{h}_{L^2(0,T; H^{-2}(0,1))}\right).
	\end{align*}
\end{theorem}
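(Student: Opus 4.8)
The plan is to follow the transposition (duality) method verbatim from the previous theorem, the only structural change being that the control now enters through the elliptic equation and so is tested against the adjoint elliptic variable $\psi$ rather than against $\sigma$. To locate the correct weak formulation I would take a solution $(\sigma,\psi)$ of the adjoint system \eqref{Adjoint-system} with $\sigma_T=0$, multiply the first equation of \eqref{main-2} by $\sigma$, integrate over $Q_T$, and integrate by parts in $t$ and $x$ using \eqref{bd} together with $\sigma(t,0)=\sigma(t,1)=\sigma_x(t,0)=\sigma_x(t,1)=0$. This transfers the fourth-order operator onto $\sigma$ and, via the $\sigma$-equation, produces the right-hand side $b\psi+g$, while the time integration by parts contributes the term $\ip{u_0}{\sigma(0,\cdot)}$. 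The coupling $a\iintQ v\sigma$ is then rewritten with the adjoint elliptic identity $-\psi_{xx}+c\psi=a\sigma$: integrating by parts twice (using $v(t,0)=v(t,1)=\psi(t,0)=\psi(t,1)=0$) and invoking the elliptic equation of \eqref{main-2} gives $a\iintQ v\sigma=\iintQ(bu+\chi_\omega h)\psi$. After cancelling the common term $b\iintQ u\psi$, the solution would be defined by
\begin{equation*}
\iintQ u\,g=\ip{u_0}{\sigma(0,\cdot)}_{H^{-2},H^2_0}+\ip{\chi_\omega h}{\psi},\qquad\forall\,g\in L^2(0,T;L^2(0,1)),
\end{equation*}
in which, crucially, the control is paired with $\psi$.

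Next I would run the Riesz argument as before. Setting $\mathcal{L}(g)$ equal to the right-hand side above, \Cref{Prop-energy} bounds $g\mapsto\sigma(0,\cdot)$ from $L^2(0,T;L^2)$ into $H^2_0(0,1)$ and $g\mapsto\psi$ into $L^2(0,T;H^2\cap H^1_0)$, so $\mathcal{L}$ is a continuous linear functional and the Riesz representation theorem yields a unique $u\in L^2(0,T;L^2(0,1))$ satisfying the identity. The component $v$ is then recovered as the unique solution of $-v_{xx}+cv=bu+\chi_\omega h$ with $v(t,0)=v(t,1)=0$, which is well posed because $c>-\pi^2$. Here $bu\in L^2(0,T;L^2)$ is a genuine function, while $\chi_\omega h\in L^2(0,T;H^{-2})$ is a distribution supported in $\omega$; using the two-derivative elliptic gain (for the distributional part, via the interior location of $\mathrm{supp}\,\chi_\omega h$) places $v\in L^2(0,T;L^2(0,1))$, exactly the claimed regularity. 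Note that, unlike the previous theorem, $v$ acquires no time regularity: it inherits directly the merely $L^2$-in-time control, which is precisely why only $L^2(0,T;L^2)$ is claimed for $v$.

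To upgrade $u$ to $\C^0([0,T];H^{-2}(0,1))$ I would repeat the Riesz step with test data $g\in L^1(0,T;H^2_0(0,1))$, now using the estimate \eqref{adj-solu-1-1}; this delivers $u\in L^\infty(0,T;H^{-2}(0,1))$ with the asserted bound, after which a standard density argument (approximating $u_0$ and $h$ by smooth data, for which the parabolic smoothing gives classical solutions, and passing to the limit) promotes $L^\infty$ to $\C^0$ in time. Uniqueness is immediate: $u_0=0$ and $h=0$ force $\iintQ u\,g=0$ for all $g$, hence $u=0$ and then $v=0$. The one genuinely delicate point — the only real difference from the previous theorem — is the well-definedness of the pairing $\ip{\chi_\omega h}{\psi}$. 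In the previous theorem the control was tested against $\sigma\in\C^0([0,T];H^2_0(0,1))$, which carries the full boundary regularity matching the $H^{-2}$–$H^2_0$ duality; here the relevant adjoint function $\psi$ lies only in $\C^0([0,T];H^2\cap H^1_0(0,1))$ (it vanishes at the endpoints but its derivative need not), so the pairing is legitimate only because $\chi_\omega h$ is supported in $\omega$ away from $\partial(0,1)$ — assuming, as one may, that $\omega$ is compactly contained in $(0,1)$, so that a cutoff identifies $\chi_\omega h$ with an element of $(H^2\cap H^1_0)'$. Making this pairing rigorous is the main (though routine) obstacle.
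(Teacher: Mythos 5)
Your proposal is correct and is essentially the argument the paper intends: the paper proves this theorem only by declaring it ``analogous to the previous theorem,'' i.e., the same transposition/Riesz-representation scheme, which you carry out with the right structural modification (the duality identity now pairs the control against $\psi$ rather than $\sigma$, after the cancellation of the common term $b\iint u\psi$, and $v$ is recovered a posteriori from the elliptic equation, which also explains why $v$ gets no continuity in time). Your closing observation --- that $h\in L^2(0,T;H^{-2})=L^2(0,T;(H^2_0)')$ cannot be canonically paired with $\psi\in L^2(0,T;H^2\cap H^1_0)$ and that this must be repaired via the localization of $\chi_\omega h$ away from the boundary (cutoff argument) --- is a genuine subtlety that the paper glosses over, and your fix is the standard and correct one.
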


	\smallskip 
	
\section{Control acting in the KS-KdV equation}\label{sec-car1}

This section is devoted to study the controllability of our coupled system when  a localized control is exerted in the KS-KdV equation. We start by proving the null-controllability of the associated linearized model and as mentioned earlier, using the source term method and a fixed point argument, we will give the required local null-controllability for the nonlinear system. 

\subsection{A global Carleman estimate}\label{Sec-carleman-1}

 We shall utilize the Carleman technique to deduce the null-controllability of the linearized system. 
 
To do so, let us define the some useful weight functions.
Consider a non-empty open set $\omega_0\subset \subset  \omega$.
There exists a function $\nu \in \C^\infty([0,1])$ such that  
\begin{align}\label{definition_nu} 
	\begin{cases}
		\nu(x)>0 \qquad \forall x\in (0,1), \ \ \nu(0)=\nu(1)=0, \\
		|\nu^\prime(x) | \geq \widehat c >0 \ \  \forall x \in \overline{(0,1) \setminus \omega_0} \ \ \text{ for some } \widehat c > 0.
	\end{cases}
\end{align} 
It is clear that $\nu^{\prime}(0)>0$ and $\nu^\prime(1)<0$. We refer \cite{Fur-Ima} where the existence of such function has been addressed.  

Now, for any constant $k>1$ and parameter $\lambda>0$,  we define the weight functions 
\begin{align}\label{weight_function}
	\vphi(t,x)= \frac{e^{2\lambda k \|\nu\|_{\infty}   }-  e^{\lambda\big( k\|\nu\|_{\infty} + \nu(x) \big)}}{t(T-t)}	, \quad \xi(t,x) = \frac{e^{\lambda\big( k\|\nu\|_{\infty} + \nu(x) \big)}}{t(T-t)}, \quad \forall \, (t,x) \in Q_T.
\end{align}
We have defined the above  weight functions by means of  the works \cite{Zhou} and \cite{G07}. It is clear that both $\vphi$ and $\xi$ are positive functions in $Q_T$.

We also define
\begin{equation}
\begin{aligned}\label{min max}
&\widehat{\vphi}(t)=\max_{x\in[0,1]}\vphi(t,x), \quad \quad {\vphi}^*(t)=\min_{x\in[0,1]}\vphi(t,x),\\
&\widehat{\xi}(t)=\max_{x\in[0,1]}\xi(t,x), \quad \quad {\xi}^*(t)=\min_{x\in[0,1]}\xi(t,x).
\end{aligned}
\end{equation}  
Some immediate results associated with the weight functions are the followings.
\begin{itemize}
	\item[--]  For any $l\in \mathbb N^*$ and $s>0$, there exists some $C>0$ such that
	\begin{align}\label{weight-deri-t}
		\big|(e^{-2s\vphi}\xi^l)_t \big| \leq C s e^{-2s\vphi}\xi^{l+2}.
	\end{align}

	\item[--]  For any $(l,n)\in \mathbb N^* \times \mathbb N^*$ and $s>0$, there exists some $C>0$ such that
\begin{align}\label{weight-deri-x}
	\big|(e^{-2s\vphi}\xi^l)_{n,x} \big| \leq C s^n \lambda^n e^{-2s\vphi}\xi^{l+n}.
\end{align}  
	
\end{itemize}

\paragraph{\bf Some useful notations} We also  declare the following notations which will simplify the expressions of our Carleman inequalities.

\begin{itemize} 
	\item[--]  For any $\sigma\in \C^2([0,T];\C^4([0,1]))$ and positive parameters $s$, $\lambda$,  we denote
	\begin{multline}\label{Notation_KS}
		I_{KS}(s,\lambda;\sigma):=s^7\lambda^8 \iintQ e^{-2s\vphi} \xi^7 |\sigma|^2 + s^5\lambda^6 \iintQ e^{-2s\vphi} \xi^5 |\sigma_{x}|^2 +  s^3\lambda^4 \iintQ e^{-2s\vphi} \xi^3 |\sigma_{xx}|^2 \\
		+ s\lambda^2 \iintQ e^{-2s\vphi} \xi |\sigma_{xxx}|^2   +	s^{-1} \iintQ e^{-2s\vphi}\xi^{-1} (|\sigma_t|^2+ |\sigma_{xxxx}|^2\big). 
	\end{multline}	
	
	\item[--] For any function $\psi\in \C^2(\overline{Q_T})$ and positive parameters $s$, $\lambda$, we denote 
	\begin{align}\label{Notation_elliptic}
		I_E(s,\lambda; \psi): = s^3\lambda^4 \iintQ e^{-2s\vphi} \xi^3 |\psi|^2 + s\lambda^2 \iintQ e^{-2s\vphi} \xi |\psi_{x}|^2 +	s^{-1} \iintQ e^{-2s\vphi}\xi^{-1} |\psi_{xx}|^2.
	\end{align}
\end{itemize}



\smallskip 
With help of the above notations, we now prescribe the  Carleman estimate satisfied by the solution $(\sigma, \psi)$ to the following adjoint system
(of \eqref{main}/\eqref{main-2}--\eqref{bd}--\eqref{in})
\begin{equation}\label{Adjoint-system-2}
	\begin{cases}
		-\sigma_t + \gamma_1 \sigma_{xxxx} - \sigma_{xxx}  +  \gamma_2 \sigma_{xx}=  b \psi   , & (t,x) \in Q_T, \\
		-\psi_{xx}+c\psi=a \sigma   , & (t,x) \in Q_T,\\
		\sigma(t, 0)=0,\quad \sigma(t, 1)=0,  & t \in (0, T),\\
		\sigma_x(t,0)=0,\quad \sigma_x(t,1)=0, & t \in (0, T),\\
		\psi(t, 0)=0,\quad \psi(t, 1)=0, & t \in (0, T),\\
		\sigma(T,x)=\sigma_T(x), & x \in (0,1),
	\end{cases}
\end{equation}
with given final data $\sigma_T \in H^2_0(0,1)$. 
\begin{theorem}[Carleman estimate: control in KS-KdV eq.]\label{Thm_Carleman_main}
		Let the weight functions $(\vphi, \xi)$  be given by \eqref{weight_function}. Then, there exist positive constants $\lambda_0$, $s_0:=  \mu_0(T+T^{2})$ for some $\mu_0>0$ and $C$ which  depend on $\gamma_1$, $\gamma_2$, $a$, $b$, $c$ and the set $\omega$,  such that we have the following estimate satisfied by the solution to \eqref{Adjoint-system-2}, 
			\begin{align}\label{carle-joint} 
			I_{KS}(s,\lambda; \sigma) +   	I_{E}(s,\lambda; \psi)
			\leq C  s^{15}\lambda^{16} \int_0^T \int_{\omega_1} e^{-4s\vphi^*+2s\widehat{\vphi}} (\widehat{\xi})^{15} |\sigma|^2,
		\end{align}
		for all $\lambda \geq \lambda_0$ and $s\geq s_0$, where $I_{KS}(s,\lambda; \sigma)$ and $I_E(s,\lambda; \psi)$ are introduced in \eqref{Notation_KS} and \eqref{Notation_elliptic} respectively and $\vphi^*, \widehat{\vphi},\, \widehat{\xi}$ are given in \eqref{min max}. 
	\end{theorem}
Before going to the proof of the above Carleman inequality, let us write the individual Carleman inequalities for the components $\sigma$ and $\psi$.
 \begin{proposition}\label{thm-carleman-sigma}
	Let the weight functions $(\vphi, \xi)$  be given by \eqref{weight_function}. Then, there exist positive constants $\lambda_1$, $s_1:=  \mu_1(T+T^{2})$ for some $\mu_1>0$ and $C$ which  depend on $\gamma_1, \gamma_2, a,b,c$ and the set $\omega_0$,  such that we have the following estimate satisfied by the solution component $\sigma$  of \eqref{Adjoint-system-2},
	\begin{align}\label{carle-ks}
		I_{KS}(s,\lambda; \sigma)   
		\leq C \iintQ e^{-2s\vphi}|\psi|^2 + Cs^7\lambda^8 \int_0^T\int_{\omega_0} e^{-2s\vphi} \xi^7 |\sigma|^2,
	\end{align}
	for all $\lambda \geq \lambda_1$ and $s\geq s_1$, and $I_{KS}(s,\lambda; \sigma)$ is introduced by \eqref{Notation_KS}.
\end{proposition}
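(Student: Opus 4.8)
The plan is to regard \Cref{thm-carleman-sigma} as a Carleman estimate for the scalar fourth-order parabolic operator
\[
 P\sigma := -\sigma_t + \gamma_1 \sigma_{xxxx} - \sigma_{xxx} + \gamma_2 \sigma_{xx},
\]
with the clamped boundary conditions $\sigma = \sigma_x = 0$ at $x=0,1$, treating the coupling term $b\psi$ on the right-hand side of the first equation of \eqref{Adjoint-system-2} as a given source $f \in L^2(Q_T)$. Thus it suffices to prove
\[
 I_{KS}(s,\lambda;\sigma) \leq C \iintQ e^{-2s\vphi} |f|^2 + C s^7 \lambda^8 \int_0^T \int_{\omega_0} e^{-2s\vphi}\xi^7 |\sigma|^2
\]
for every solution of $P\sigma = f$, and then to substitute $f = b\psi$. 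This is the KS-KdV analogue of the fourth-order Carleman inequalities in \cite{Zhou}, \cite{Taka17}, \cite{Cer17}, the only structural novelty being the presence of the dispersive term $-\sigma_{xxx}$, which I will absorb as a lower-order perturbation.

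\textbf{Conjugation and the symmetric/antisymmetric split.} First I would set $w = e^{-s\vphi}\sigma$ and compute $e^{-s\vphi} P(e^{s\vphi} w)$, keeping track of the leading contributions of $\gamma_1 \partial_x^4$ and $-\partial_t$ and collecting terms by parity in $s$ and in the number of spatial derivatives. This produces a decomposition $e^{-s\vphi} P \sigma = L_1 w + L_2 w + Rw$, where $L_1$ (resp. $L_2$) gathers the formally self-adjoint (resp. skew-adjoint) terms and $R$ is lower order. Squaring and integrating over $Q_T$ yields $\|L_1 w\|_{L^2}^2 + \|L_2 w\|_{L^2}^2 + 2(L_1 w, L_2 w)_{L^2} \le C\| e^{-s\vphi} f\|_{L^2}^2 + (\text{lower order})$. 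The heart of the argument is the cross term $(L_1 w, L_2 w)_{L^2}$: after repeated integration by parts in $x$ and $t$, and using the structure of $\vphi,\xi$ together with \eqref{weight-deri-t}--\eqref{weight-deri-x}, it generates the positive distributed quantities
\[
 s^7\lambda^8 \iintQ e^{-2s\vphi}\xi^7 |w|^2 + s^5\lambda^6 \iintQ e^{-2s\vphi}\xi^5|w_x|^2 + s^3\lambda^4 \iintQ e^{-2s\vphi}\xi^3 |w_{xx}|^2 + s\lambda^2\iintQ e^{-2s\vphi}\xi |w_{xxx}|^2,
\]
i.e. precisely the terms defining $I_{KS}$ once one returns to $\sigma$. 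Choosing first $\lambda$ and then $s$ large lets me absorb the remainder $Rw$ and the non-principal terms $-\sigma_{xxx}$ and $\gamma_2\sigma_{xx}$, whose weighted norms carry strictly lower powers of $s$ and $\lambda$ than the gained positive terms.

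\textbf{Localization and boundary terms.} The cross-term computation a priori produces the weighted gradient terms integrated over all of $(0,1)$, not just over $\omega_0$. To localize I would use the defining property $|\nu'(x)| \ge \widehat c$ on $\overline{(0,1)\setminus\omega_0}$ from \eqref{definition_nu}, which guarantees that outside $\omega_0$ the coefficient multiplying the top-order weighted term has the right sign; the only region where this may fail is $\omega_0$, where the offending quantity is reinserted as the observation term $s^7\lambda^8\int_0^T\int_{\omega_0} e^{-2s\vphi}\xi^7|\sigma|^2$. Reducing this local term to one involving only $|\sigma|^2$ (and none of its derivatives) requires a Caccioppoli-type argument: introduce a cutoff $\zeta \in \C^\infty_c(\omega)$ with $\zeta \equiv 1$ on $\omega_0$, multiply $P\sigma=f$ by suitable weighted derivatives of $\zeta\sigma$, and integrate by parts to trade each local derivative of $\sigma$ for higher powers of $s,\lambda,\xi$ times $|\sigma|^2$ supported in $\omega$.

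\textbf{Main obstacle.} I expect the delicate point to be the boundary terms at $x=0,1$. Since the clamped conditions only give $\sigma = \sigma_x = 0$, the many integrations by parts in $x$ leave boundary contributions involving $\sigma_{xx}$ and $\sigma_{xxx}$, which are not directly controlled. These must be shown either to vanish or to carry a favourable sign, using $\nu(0)=\nu(1)=0$, $\nu'(0)>0$, $\nu'(1)<0$, together with the fact that $e^{-2s\vphi}$ and all weighted powers of $\xi$ decay to zero as $t\to 0^+$ and $t\to T^-$ (so the temporal boundary terms drop). The odd-order dispersive term $-\sigma_{xxx}$ also breaks the clean parity of the split and contributes extra boundary and interior terms that have to be tracked and absorbed; controlling these, rather than the bulk estimate, is where the real work lies. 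Finally, returning from $w$ to $\sigma = e^{s\vphi}w$ and collecting all the estimates gives \eqref{carle-ks}.
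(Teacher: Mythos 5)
You should first know that the paper contains no proof of \Cref{thm-carleman-sigma} at all: immediately after the statement it says the estimate ``is initially established in \cite{Zhou}'' and refers to \cite{Cerpa-Mercado-Pazoto} for a similar inequality adapted to the KS--KdV operator. So your proposal is not competing with an argument in the paper but with a citation. That said, the route you sketch is the correct and standard one, and it is essentially the proof carried out in those references: conjugate with $e^{-s\vphi}$, split the conjugated operator into self-adjoint and skew-adjoint parts, extract from the cross term the positive quantities $s^{7-2k}\lambda^{8-2k}\xi^{7-2k}|\partial_x^k w|^2$ for $k=0,\dots,3$, use $|\nu'|\ge \widehat c$ on $\overline{(0,1)\setminus\omega_0}$ to confine the loss of positivity to the observation region, absorb $-\sigma_{xxx}$ and $\gamma_2\sigma_{xx}$ as perturbations carrying strictly lower powers of $s,\lambda$, recover the $s^{-1}\xi^{-1}$-weighted terms in $\sigma_t$ and $\sigma_{xxxx}$ from the equation itself, and remove derivatives from the local term by a weighted Caccioppoli argument. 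Treating $b\psi$ as an $L^2$ source, which reappears as the term $C\iintQ e^{-2s\vphi}|\psi|^2$ in \eqref{carle-ks}, is also exactly right.

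Two caveats. First, your text is an outline rather than a proof: the two points you yourself flag as ``where the real work lies'' --- the sign analysis of the spatial boundary terms that survive under the clamped conditions $\sigma=\sigma_x=0$ (these involve $\sigma_{xx}$ and $\sigma_{xxx}$ and must be controlled using $\nu(0)=\nu(1)=0$, $\nu'(0)>0$, $\nu'(1)<0$ from \eqref{definition_nu}), and the extra interior and boundary contributions generated by the odd-order term $-\sigma_{xxx}$ --- constitute the bulk of the computations in \cite{Zhou} and \cite{Cerpa-Mercado-Pazoto} and are not executed here, so the proposal as written does not yet establish \eqref{carle-ks}. Second, a small mismatch in the localization: you take a cutoff $\zeta\in\C^\infty_c(\omega)$ with $\zeta\equiv 1$ on $\omega_0$, which produces an observation integral over $\omega$, whereas \eqref{carle-ks} states it over $\omega_0$; to match the statement you should construct the weight $\nu$ relative to a strictly smaller open set $\omega_0'\subset\subset\omega_0$ and take the cutoff supported in $\omega_0$. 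This is a harmless bookkeeping fix, but it should be made explicit.
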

The proof of  above such Carleman estimate is initially established in \cite{Zhou}. We also refer \cite{Cerpa-Mercado-Pazoto} where a similar Carleman estimate has been addressed.  

Let us also write the following elliptic Carleman estimate (see for instance \cite{Fur-Ima}) satisfied by $\psi$. 
 \begin{proposition}\label{thm-carleman-psi}
	Let the weight functions $(\vphi, \xi)$  be given by \eqref{weight_function}. Then, there exist positive constants $\lambda_2$, $s_2:=  \mu_2(T+T^{2})$ with some $\mu_2>0$ and $C$ which  depend on $\gamma_1, \gamma_2, a,b,c$ and the set $\omega_0$,  such that we have the following estimate satisfied by the solution component $\psi$  of \eqref{Adjoint-system-2}, 
	\begin{align}\label{carle-ell} 
		I_{E}(s,\lambda; \psi)
		\leq C \iintQ e^{-2s\vphi}|\sigma|^2 + Cs^3\lambda^4 \int_0^T\int_{\omega_0} e^{-2s\vphi} \xi^3 |\psi|^2,
	\end{align}
	for all $\lambda \geq \lambda_2$ and $s\geq s_2$,  where $I_{E}(s,\lambda; \psi)$ is introduced by \eqref{Notation_elliptic}.
\end{proposition}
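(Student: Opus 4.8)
The plan is to treat \eqref{Adjoint-system-2} as a one-parameter family (indexed by $t$) of one-dimensional elliptic boundary value problems and to apply the classical Fursikov--Imanuvilov elliptic Carleman estimate, since \eqref{carle-ell} is exactly of this type. For each fixed $t\in(0,T)$ the component $\psi$ solves $-\psi_{xx}+c\psi=a\sigma$ on $(0,1)$ with $\psi(t,0)=\psi(t,1)=0$; writing $F:=a\sigma-c\psi$ I regard this as $-\psi_{xx}=F$ and feed $F$ into the elliptic estimate as a source term. The key observation is that the spatial weight $e^{\lambda(k\norm{\nu}_\infty+\nu(x))}$ and its $x$-derivatives entering \eqref{weight_function} are precisely the ones used in the standard estimate, with the $t$-dependent factor $1/(t(T-t))$ acting merely as a multiplicative constant in $x$; in particular $\vphi_x=-\lambda\nu'\xi$ and $\xi_x=\lambda\nu'\xi$, which is what produces the announced powers $s^3\lambda^4$, $s\lambda^2$, $s^{-1}$.

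To obtain the estimate I would conjugate by the weight, setting $w:=e^{-s\vphi}\psi$, and split the conjugated operator $e^{-s\vphi}(-\partial_{xx}+c)e^{s\vphi}$ into its symmetric part $P_1w=-w_{xx}-s^2\vphi_x^2w+cw$ and its antisymmetric part $P_2w=-2s\vphi_xw_x-s\vphi_{xx}w$. Expanding $\norm{e^{-s\vphi}F}_{L^2}^2=\norm{P_1w}_{L^2}^2+\norm{P_2w}_{L^2}^2+2\ip{P_1w}{P_2w}_{L^2}$ and integrating the cross term by parts in $x$ produces, thanks to $|\nu'|\ge\widehat c>0$ on $\overline{(0,1)\setminus\omega_0}$, the dominant positive quantities $s^3\lambda^4\xi^3|w|^2$ and $s\lambda^2\xi|w_x|^2$, up to boundary contributions at $x=0,1$ and a residual term supported in $\omega_0$. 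Returning to $\psi$ and integrating over $(0,T)$ then yields the first two terms of $I_E(s,\lambda;\psi)$ from \eqref{Notation_elliptic} on the left, the source contribution $C\iintQ e^{-2s\vphi}|F|^2$ on the right, and a local term $Cs^3\lambda^4\int_0^T\int_{\omega_0}e^{-2s\vphi}\xi^3|\psi|^2$.

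It remains to cast this in the exact form \eqref{carle-ell}. First, using $|F|^2\le 2a^2|\sigma|^2+2c^2|\psi|^2$ together with the fact that $1\le Cs^3\lambda^4\xi^3$ for $s,\lambda$ large (recall $\xi\ge 4/T^2$), the contribution coming from $c\psi$ is absorbed into the left-hand side, leaving only $C\iintQ e^{-2s\vphi}|\sigma|^2$. Second, the second-derivative term is recovered directly from the equation: since $\psi_{xx}=c\psi-a\sigma$, one has $s^{-1}\iintQ e^{-2s\vphi}\xi^{-1}|\psi_{xx}|^2\le Cs^{-1}\iintQ e^{-2s\vphi}\xi^{-1}(|\psi|^2+|\sigma|^2)$, whose $\psi$-part is again absorbed (as $s^{-1}\xi^{-1}\le s^3\lambda^4\xi^3$) and whose $\sigma$-part is bounded by $C\iintQ e^{-2s\vphi}|\sigma|^2$ because $s^{-1}\xi^{-1}$ is uniformly bounded for $s\ge s_2$. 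Choosing $\lambda_2$ large enough to absorb the lower powers of $\lambda$ and $s_2=\mu_2(T+T^2)$ large enough to control the blow-up of $\vphi$ as $t\to 0,T$ completes the argument.

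I expect the genuine difficulty to be the boundary terms at $x=0$ and $x=1$ generated by the integration by parts in the cross term $\ip{P_1w}{P_2w}_{L^2}$. These are controlled precisely by the homogeneous Dirichlet condition $w(t,0)=w(t,1)=0$ together with the sign information $\nu'(0)>0$ and $\nu'(1)<0$ recorded after \eqref{definition_nu}, which forces the surviving boundary contributions to carry the favourable sign. Keeping the bookkeeping of the powers of $s$ and $\lambda$ consistent through this step, and through the localization by a cutoff adapted to $\omega_0\subset\subset\omega$, is the main technical chore; everything else is routine and can be quoted from \cite{Fur-Ima}.
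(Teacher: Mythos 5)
Your proposal is correct and is essentially the approach the paper itself relies on: the paper does not prove this proposition at all but simply cites \cite{Fur-Ima}, and your argument --- conjugating with $w=e^{-s\vphi}\psi$, splitting into symmetric/antisymmetric parts, integrating the cross term by parts, using $|\nu'|\geq \widehat c$ outside $\omega_0$ and the signs $\nu'(0)>0$, $\nu'(1)<0$ for the boundary terms, then recovering $\psi_{xx}$ from the equation --- is precisely the standard proof behind that citation, correctly adapted to the $t$-dependent weights by applying it slice-wise in $t$ with effective parameter $s/(t(T-t))$, which the choice $s_2=\mu_2(T+T^2)$ keeps above threshold uniformly in $t$. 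The one cosmetic slip is that you simultaneously put $cw$ into $P_1$ and move $-c\psi$ into the source $F$ (you should do one or the other), but either choice closes the argument exactly as you describe.
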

With help of the above two Carleman estimates \eqref{carle-ks} and \eqref{carle-ell}, we are now ready to prove the main Carleman estimate \eqref{carle-joint}.

\begin{proof}[\bf Proof of \Cref{Thm_Carleman_main}]
	Let us add both the Carleman estimates \eqref{carle-ks} and \eqref{carle-ell}, we have 
	\begin{multline}\label{Add_carlemans}
		I_{KS}(s,\lambda; \sigma) + I_{E}(s,\lambda; \psi) 
	\leq C \bigg[\iintQ e^{-2s\vphi}|\psi|^2 + 	\iintQ e^{-2s\vphi}|\sigma|^2  \\+
	 s^7\lambda^8 \int_0^T\int_{\omega_0} e^{-2s\vphi} \xi^7 |\sigma|^2 
	 + s^3\lambda^4 \int_0^T\int_{\omega_0} e^{-2s\vphi} \xi^3 |\psi|^2\bigg],
	\end{multline}
for all $\lambda\geq \lambda_0:= \max\{\lambda_1, \lambda_2\}$ and $s\geq \widehat\mu(T+T^2)$ with $\widehat \mu:=\max\{\mu_1, \mu_2\}$.

\smallskip 
\noindent 
{\bf Step 1. Absorbing the lower order integrals.} Observe that $1\leq C T^{2}\xi$ for some $C>0$. This yields 
\begin{align}\label{lower_int}
	 \iintQ e^{-2s\vphi}|\psi|^2 + 	\iintQ e^{-2s\vphi}|\sigma|^2 \leq C T^{6}\iintQ e^{-2s\vphi}\xi^3|\psi|^2 +  C T^{14} \iintQ e^{-2s\vphi}\xi^7|\sigma|^2. 
	\end{align}
Then by choosing $s \geq \widehat C T^2$ for some constant $\widehat C>0$,   the integrals in the right hand side of \eqref{lower_int} can be absorbed by the associated leading integrals appearing in the left hand side of \eqref{Add_carlemans}. This leads to the following 
\begin{align}\label{Add_carlemans-2}
	I_{KS}(s,\lambda; \sigma) + I_{E}(s,\lambda; \psi) 
	\leq C \bigg[
	s^7\lambda^8 \int_0^T\int_{\omega_0} e^{-2s\vphi} \xi^7 |\sigma|^2 
	+ s^3\lambda^4 \int_0^T\int_{\omega_0} e^{-2s\vphi} \xi^3 |\psi|^2\bigg],
\end{align}
for all $\lambda\geq \lambda_0$ and $s\geq \mu_0(T+T^2)$ for some $\mu_0\geq \widehat \mu$. 

\smallskip 
\noindent 
 {\bf Step 2. Absorbing the observation integral in $\psi$.}
Consider $\omega_0$ and another non-empty open set $\omega_1$ in such a way that  $\omega_0 \subset \subset \omega_1 \subset \subset \omega$. Then, we consider the function 
\begin{align}\label{smooth_func}
	\phi \in \C^\infty_c(\omega_1)  \ \text{ with } \ 0\leq \phi \leq 1, \ \text{ and } \ \phi =1 \ \text{ in } \ \omega_0. 
\end{align}
Now, recall the adjoint system \eqref{Adjoint-system-2},  one has (since $b\neq 0$),
\begin{align*}
	\psi = \frac{1}{b} \left( -\sigma_t + \gamma_1 \sigma_{xxxx} -\sigma_{xxx} +\gamma_2 \sigma_{xx}   \right), \quad \text{in } Q_T.
\end{align*}
Using this, we have 
\begin{align}\label{Obs-psi}
s^3\lambda^4 \int_0^T\int_{\omega_0} e^{-2s\vphi} \xi^3 |\psi|^2 
&\leq s^3\lambda^4 \int_0^T\int_{\omega_1} \phi e^{-2s\vphi} \xi^3 |\psi|^2   \notag \\ 
&= \frac{1}{b}s^3\lambda^4 \int_0^T\int_{\omega_1} \phi e^{-2s\vphi} \xi^3 \psi \left( -\sigma_t + \gamma_1 \sigma_{xxxx} -\sigma_{xxx} +\gamma_2 \sigma_{xx}   \right)  \notag \\  
 &=A_1+ A_2+ A_3+A_4 .
\end{align}

\noindent
(i) Let us start with the following. 
\begin{align}\label{A_1}
	A_1:&= - \frac{1}{b}s^3\lambda^4 \int_0^T\int_{\omega_1} \phi e^{-2s\vphi} \xi^3 \psi \sigma_t  \notag  \\
	&= \frac{1}{b} s^3\lambda^4 \int_0^T\int_{\omega_1} \phi (e^{-2s\vphi} \xi^3)_t \psi \sigma
	 + \frac{1}{b}s^3\lambda^4 \int_0^T\int_{\omega_1} \phi e^{-2s\vphi} \xi^3 \psi_t \sigma.
	\end{align}
In above, we perform an integration by parts with respect to $t$. There is no boundary terms since   the weight function $e^{-2s\vphi}$ vanishes near $t=0$ and $T$.  

Now, recall the fact \eqref{weight-deri-t}, so that the first integral in the right hand side of \eqref{A_1} gives 
\begin{align}\label{esti_A_1-1}
	\left|\frac{1}{b} s^3\lambda^4 \int_0^T\int_{\omega_1} \phi (e^{-2s\vphi} \xi^3)_t \psi \sigma\right| &\leq C s^4\lambda^4 \int_0^T \int_{\omega_1} e^{-2s\vphi} \xi^5 \left| \psi \sigma \right| \notag \\ 
	&\leq \epsilon s^3\lambda^4 \iintQ e^{-2s\vphi} \xi^3 |\psi|^2 + \frac{C}{\epsilon} s^5\lambda^4\int_0^T \int_{\omega_1} e^{-2s\vphi} \xi^7 |\sigma|^2,
\end{align}
for any $\epsilon>0$, where we have  used the Cauchy-Schwarz and Young's inequalities. 

To estimate the last integral term in \eqref{A_1}, we need to differentiate the second equation of \eqref{Adjoint-system-2} with respect to $t$ and that yields
\begin{equation}\label{elliptic_t}
	\begin{cases}
		-(\psi_t)_{xx}+ c \psi_t=a \sigma_t, & \text{ in }Q_T,\\
		\psi_t(t,0)= \psi_t(t,1)=0, & t \in (0,T).
	\end{cases}
\end{equation}
	Thanks to \eqref{elliptic_t}, there exists a constant $C>0$ such that we have
\begin{align}\label{lap est}
	\int_{0}^{1}|\psi_t|^2\leq C \int_{0}^{1}|\sigma_t|^2.
\end{align}
Let us estimate the last integral in \eqref{A_1} as 
\begin{align}\label{esti_A_1-2}
\nonumber	\left|\frac{1}{b} s^3\lambda^4 \int_0^T\int_{\omega_1} \phi e^{-2s\vphi} \xi^3 \psi_t \sigma\right|&\leq C s^3\lambda^4 \int_0^T\int_{\omega_1} e^{-2s\vphi} \xi^3 \left|\psi_t\right| \, |\sigma|\\ 
\nonumber& \leq C s^3\lambda^4 \int_0^T e^{-2s\vphi^*} (\widehat{\xi})^3 \norm{\psi_t}_{L^2(\omega_1)}\, \norm{\sigma}_{L^2(\omega_1)}, \text{ using } \eqref{min max}\\
 \nonumber& \leq C s^3\lambda^4 \int_0^T e^{-s\widehat{\vphi}} (\widehat{\xi})^3 \norm{\sigma_t}_{L^2(\omega_1)}\, e^{-2s\vphi^*+s\widehat{\vphi}} \norm{\sigma}_{L^2(\omega_1)} \text{ using } \eqref{lap est}\\
 \nonumber& \leq C  \int_0^T e^{-s\widehat{\vphi}}  (s\widehat{\xi})^{-\frac{1}{2}}  \norm{\sigma_t}_{L^2(\omega_1)}\, e^{-2s\vphi^*+s\widehat{\vphi}} \lambda^4 (s\widehat{\xi})^{\frac{7}{2}} \norm{\sigma}_{L^2(\omega_1)} \\
	&\leq  \epsilon s^{-1} \iintQ e^{-2s\widehat{\vphi}} (\widehat{\xi})^{-1} |\sigma_t|^2 + \frac{C}{\epsilon} s^7\lambda^8\int_0^T \int_{\omega_1} e^{-4s\vphi^*+2s\widehat{\vphi}} (\widehat{\xi})^7 |\sigma|^2,
\end{align}
for any $\epsilon>0$, where we have  used the Cauchy-Schwarz and Young's inequalities.
Thus, we eventually have 
\begin{align}\label{A_1-1}
	|A_1| \leq \epsilon s^3\lambda^4 \iintQ e^{-2s\vphi} \xi^3 |\psi|^2 +   \epsilon s^{-1} \iintQ e^{-2s\vphi} \xi^{-1} |\sigma_t|^2 + \frac{C}{\epsilon} s^7\lambda^8 \int_0^T \int_{\omega_1} e^{-4s\vphi^*+2s\widehat{\vphi}} (\widehat{\xi})^7 |\sigma|^2 . 
 \end{align}

\noindent
(ii) Next, after integrating by parts twice with respect to $x$, we get 
\begin{align}\label{A_2}
	A_2:&=  \frac{1}{b} \gamma_1s^3\lambda^4 \int_0^T\int_{\omega_1} \phi e^{-2s\vphi} \xi^3 \psi \sigma_{xxxx}  \notag  \\
	&= -\frac{1}{b}\gamma_1 s^3\lambda^4 \bigg[\int_0^T\int_{\omega_1} \phi_x (e^{-2s\vphi} \xi^3) \psi \sigma_{xxx} 
	+\int_0^T\int_{\omega_1} \phi (e^{-2s\vphi} \xi^3)_x \psi \sigma_{xxx}   \notag  \\
	& \qquad \qquad \qquad \qquad \qquad \qquad \qquad +\int_0^T\int_{\omega_1} \phi (e^{-2s\vphi} \xi^3) \psi_x \sigma_{xxx}\bigg]  \notag \\
	& = \frac{1}{b}\gamma_1 s^3\lambda^4 \int_0^T \int_{\omega_1} \bigg[\phi_{xx} (e^{-2s\vphi} \xi^3) \psi \sigma_{xx} +  \phi (e^{-2s\vphi} \xi^3)_{xx} \psi \sigma_{xx} +  \phi (e^{-2s\vphi} \xi^3) \psi_{xx} \sigma_{xx} \bigg]    \notag \\  
	& \quad + \frac{2}{b}\gamma_1 s^3\lambda^4 \int_0^T \int_{\omega_1} \bigg[\phi_{x} (e^{-2s\vphi} \xi^3)_x \psi \sigma_{xx} + \phi_{x} (e^{-2s\vphi} \xi^3) \psi_x \sigma_{xx} + \phi (e^{-2s\vphi} \xi^3)_x \psi_x \sigma_{xx}\bigg].
\end{align}
Using \eqref{weight-deri-x} and Cauchy-Schwarz inequality,    we have from \eqref{A_2},
\begin{align}\label{A_2-1}
	|A_2| \leq 3\epsilon s^3\lambda^4 \iintQ e^{-2s\vphi} \xi^{3} |\psi|^2+2\epsilon s\lambda^2 \iintQ e^{-2s\vphi} \xi |\psi_x|^2 + \epsilon s^{-1} \iintQ e^{-2s\vphi} \xi^{-1} |\psi_{xx}|^2  \notag \\
+	 \frac{C}{\epsilon} s^{7}\lambda^{8} \int_0^T \int_{\omega_1}  e^{-2s\vphi} \xi^{7} |\sigma_{xx}|^2 .
\end{align}

\noindent
(iii) Let us estimate the third term of \eqref{Obs-psi} in the following way 
\begin{align*}
	A_3:&=  -\frac{1}{b}s^3\lambda^4\int_0^T\int_{\omega_1} \phi e^{-2s\vphi} \xi^3 \psi \sigma_{xxx}  \\
	&= \frac{1}{b} s^3\lambda^4 \int_0^T\int_{\omega_1} \bigg[\phi_x e^{-2s\vphi} \xi^3 \psi \sigma_{xx}  +  \phi (e^{-2s\vphi} \xi^3)_x \psi \sigma_{xx} +  \phi e^{-2s\vphi} \xi^3 \psi_x \sigma_{xx}\bigg].
\end{align*}
Therefore, for $\epsilon>0$ we have, using \eqref{weight-deri-x} and the Cauchy-Schwarz inequality, that 
\begin{align}\label{A_3} 
	|A_3| \leq 2\epsilon s^3\lambda^4 \iintQ e^{-2s\vphi} \xi^3 |\psi|^2 + 
	\epsilon s\lambda^2 \iintQ e^{-2s\vphi} \xi |\psi_x|^2 + 
	\frac{C}{\epsilon} s^5\lambda^6 \int_0^T \int_{\omega_1} e^{-2s\vphi} \xi^5 |\sigma_{xx}|^2.  
\end{align}

\noindent
(iv) Next, it is easy to see that 
\begin{align}\label{A_4}
	|A_4|  \leq \epsilon s^3\lambda^4 \iintQ e^{-2s\vphi} \xi^3 |\psi|^2 + \frac{C}{\epsilon} s^3\lambda^4 \int_0^T \int_{\omega_1} e^{-2s\vphi} \xi^3 |\sigma_{xx}|^2. 
\end{align}
Combining the estimates \eqref{A_1-1}, \eqref{A_2-1}, \eqref{A_3}, \eqref{A_4}, and applying in \eqref{Obs-psi} we get 
\begin{multline}\label{estimate-aux-1}
	s^3\lambda^4 \int_0^T \int_{\omega_0} e^{-2s\vphi} \xi^3 |\psi|^2 
	\leq 
7\epsilon  s^3\lambda^4 \iintQ  e^{-2s\vphi} \xi^3 |\psi|^2 +  
	3\epsilon s\lambda^2 \iintQ  e^{-2s\vphi} \xi |\psi_x|^2 \\ 
		+ \epsilon s^{-1} \iintQ  e^{-2s\vphi} \xi^{-1} |\psi_{xx}|^2+\epsilon s^{-1} \iintQ  e^{-2s\vphi} \xi^{-1} |\sigma_{t}|^2 \\
		+\frac{C}{\epsilon} s^7\lambda^8 \int_0^T \int_{\omega_1} e^{-4s\vphi^*+2s\widehat{\vphi}} (\widehat{\xi})^7 |\sigma|^2 + \frac{C}{\epsilon} s^7\lambda^8 \int_0^T \int_{\omega_1} e^{-2s\vphi} \xi^7 |\sigma_{xx}|^2.
\end{multline} 
Now, fix $\epsilon>0$  small enough in \eqref{estimate-aux-1}, so that all the terms with coefficient $\epsilon$ can be absorbed by the associated integrals in the left hand side of \eqref{Add_carlemans-2}. As a  consequence,  the estimate \eqref{Add_carlemans-2} boils  down to 
\begin{align}\label{Add_carlemans-3}
	I_{KS}(s,\lambda; \sigma) + I_{E}(s,\lambda; \psi) 
	\leq C s^7\lambda^8 \int_0^T \int_{\omega_1} e^{-4s\vphi^*+2s\widehat{\vphi}} (\widehat{\xi})^7 |\sigma|^2 
	+C s^7\lambda^8 \int_0^T\int_{\omega_1} e^{-2s\vphi} \xi^7 |\sigma_{xx}|^2,
\end{align}
for all $\lambda\geq \lambda_0$ and $s\geq \mu_0(T+T^2)$.

\smallskip 
\noindent 
{\bf Step 3. Absorbing the observation integral in $\sigma_{xx}$.}
We need to estimate the last integral of \eqref{Add_carlemans-3}. Consider a function (recall that $\omega_1\subset\subset \omega$)
\begin{align}\label{smooth_func-2}
	\wphi \in \C^\infty_c(\omega)  \ \text{ with } \ 0\leq \wphi \leq 1, \ \text{ and } \  \wphi =1 \ \text{ in } \ \omega_1. 
\end{align}
With this, we have 
\begin{align*}  
	s^{7}\lambda^{8} \int_0^T \int_{\omega_1}  e^{-2s\vphi} \xi^{7} |\sigma_{xx}|^2 \leq s^{7}\lambda^{8} \int_0^T \int_{\omega} \wphi e^{-2s\vphi} \xi^{7} |\sigma_{xx}|^2 .
\end{align*}
Successive integrating by parts yields to
\begin{align*}
	&s^{7}\lambda^{8} \int_0^T \int_{\omega} \wphi e^{-2s\vphi} \xi^{7} \sigma_{xx} \sigma_{xx}  \\
	= & -	s^{7}\lambda^{8} \int_0^T \int_{\omega} \bigg[ \wphi e^{-2s\vphi} \xi^{7} \sigma_{xxx} \sigma_{x}   + \wphi (e^{-2s\vphi} \xi^{7})_x \sigma_{xx} \sigma_{x} + \wphi_x e^{-2s\vphi} \xi^{7} \sigma_{xx} \sigma_{x} \bigg] \\
	= &   s^{7}\lambda^{8} \int_0^T \int_{\omega} \bigg[ \wphi e^{-2s\vphi} \xi^{7} \sigma_{xxxx} \sigma   + \wphi (e^{-2s\vphi} \xi^{7})_{xx} \sigma_{xx} \sigma + \wphi_{xx} e^{-2s\vphi} \xi^{7} \sigma_{xx} \sigma \bigg] \\
	& \qquad +  2s^{7}\lambda^{8} \int_0^T \int_{\omega} \bigg[ \wphi_x e^{-2s\vphi} \xi^{7} \sigma_{xxx} \sigma   + \wphi (e^{-2s\vphi} \xi^{7})_{x} \sigma_{xxx} \sigma + \wphi_{x} (e^{-2s\vphi} \xi^{7})_x \sigma_{xx} \sigma \bigg].
\end{align*}
Again by using the information \eqref{weight-deri-x} and Cauchy-Schwarz inequality,   we get for some $\epsilon>0$, that 
\begin{multline} \label{A_2-2}
	s^{7}\lambda^{8} \int_0^T \int_{\omega_1}  e^{-2s\vphi} \xi^{7} |\sigma_{xx}|^2   
	\leq  
	\epsilon s^{-1} \iintQ e^{-2s\vphi} \xi^{-1} |\sigma_{xxxx}|^2 + 2 \epsilon s\lambda^2 \iintQ e^{-2s\vphi} \xi |\sigma_{xxx}|^2 \\
	+ 
	3 \epsilon s^3\lambda^4 \iintQ e^{-2s\vphi} \xi^3 |\sigma_{xx}|^2  +  \frac{C}{\epsilon} s^{15}\lambda^{16} \int_0^T \int_{\omega} e^{-2s\vphi} \xi^{15} |\sigma|^2.
\end{multline}
We fix small enough $\epsilon>0$ in \eqref{A_2-2} so that the integrals with coefficient $\epsilon$ can be absorbed by the leading integrals in the left hand of \eqref{Add_carlemans-3} and as a result we obtain 
\begin{align*}
	I_{KS}(s,\lambda; \sigma) + I_{E}(s,\lambda; \psi) 
	\leq C 
	s^{15}\lambda^{16} \int_0^T \int_{\omega_1} e^{-4s\vphi^*+2s\widehat{\vphi}} (\widehat{\xi})^{15} |\sigma|^2 ,
\end{align*}
for all $\lambda\geq \lambda_0$ and $s\geq \mu_0(T+T^2)$.

This is the required joint Carleman estimate \eqref{carle-joint} of our theorem. The proof is finished.
\end{proof}

\subsection{Observability inequality and null-controllability of the linearized model}\label{Section-obser}
With the  Carleman estimate \eqref{carle-joint},  it is difficult to obtain the desired observability inequality with norm of the  components $\sigma, \psi$ at $x=0$ in the left hand side. The reason behind is that the weight function $e^{-2s\vphi}$  is vanishing near $t=0$.  

To avoid this obstacle, we  shall proceed as follows. 
For given $T>0$, there is some $N\in \mathbb N^*$, such that $\frac{T}{N}\leq 1$. We then consider
\begin{align}\label{function_lm} 
	\ell_N(t)=
	\begin{cases}
		\frac{T^2(2N-1)}{4N^2} , &  0\leq t\leq  \frac{T}{2N}, \\
		t(T-t)  , &  \frac{T}{2N} \leq t\leq T,
	\end{cases}
\end{align}
and the following modified weight functions
\begin{align}\label{weight_function_not_0}
	\mathfrak{S}_N(t,x)= \frac{e^{2 \lambda k \|\nu\|_{\infty} } -  e^{\lambda\big( k\|\nu\|_{\infty} + \nu(x) \big)}}{\ell_N (t)}	, \quad \mathfrak{Z}_N(t,x) = \frac{e^{\lambda\big( k\|\nu\|_{\infty} + \nu(x) \big)}}{\ell_N(t)}, \quad \forall (t,x) \in Q_T.
\end{align}
for any constants $\lambda>1$ and $k>1$. Set $\widehat{\mathfrak{S}}_N, {\mathfrak{S}}^*_N, \widehat{\mathfrak{Z}}_N, {\mathfrak{Z}}^*_N$ in a similar fashion as \eqref{min max}.
%
Further, we denote
\begin{align}
\label{M-hat}	\widehat M :  &=  e^{2\lambda k\|\nu\|_{\infty}} - e^{\lambda k\|\nu\|_{\infty}}, \\
\label{M-star}	M^* :  &= e^{2\lambda k\|\nu\|_{\infty}} - e^{\lambda(k+1)\|\nu\|_{\infty}},
\end{align}
Thus we note that, 
\begin{equation}
	2M^*-\widehat{M}>0 \text{ and also } 2\mathfrak{S}^*_N-\widehat{\mathfrak{S}}_N>0.
\end{equation}
\begin{remark}\label{remark_N}
	Observe that when $0<T\leq 1$, one can simply take $N=1$ in the definition of $\ell_N(t)$  in \eqref{function_lm}.
\end{remark}

\begin{proposition}[Observability inequality: control in KS-KdV eq.]\label{prop:refined_a_0}
	There exists a constant $C>0$  depending on $\omega$, $a$, $b$, $c$, $\gamma_1$, $\gamma_2$ and $N$  such that
	we have the following observability inequality
	\begin{align}\label{Partial-obser-1}
		\|\sigma(0,\cdot)\|^2_{H^2_0(0,1)} + 	\|\psi(0,\cdot)\|^2_{H^2(0,1)}\leq
	C e^{C/T} \int_0^T\int_{\omega} |\sigma|^2  ,
	\end{align}
	where $(\sigma,\psi)$ is the solution to the adjoint system \eqref{Adjoint-system-2} and the constant $C>0$ is independent in $T$. 
\end{proposition}

\begin{proof}

	By  construction we have that $\vphi=\mathfrak{S}_N$ and $\xi=\mathfrak{Z}_N$ in $\big(\frac{T}{2N},T\big)\times(0,1)$, hence
	{
	\begin{align*}
		\int_{\frac{T}{2N}}^T \int_0^1 e^{-2s\mathfrak{S}_N} \mathfrak{Z}^7_N |\sigma|^2 = 	\int_{\frac{T}{2N}}^T \int_0^1 e^{-2s\vphi} \xi^7 |\sigma|^2.
	\end{align*}
}
	Therefore,  from the Carleman estimate \eqref{carle-joint} we readily get
	{ 
	\begin{align}\label{carleman-T/2}
		s^7\lambda^8 \int_{\frac{T}{2N}}^T \int_0^1 e^{-2s\mathfrak{S}_N} \mathfrak{Z}^7_N |\sigma|^2 
		\leq Cs^{15}\lambda^{16} \int_0^T \int_{\omega_1} e^{-4s\mathfrak{S}^*_N+2s\widehat{\mathfrak{S}}_N} (\widehat{\mathfrak{Z}}_N)^{15} |\sigma|^2   ,
	\end{align}
}
for all $\lambda \geq \lambda_0$ and $s\geq s_0$ and the constant $C>0$ does not depend on $T$.

{ Let us introduce a function $\eta\in \C^1([0,T])$ such that
	\begin{equation*}
		\eta=1 \textnormal{ in } \left[0,\frac{T}{2N}\right], \quad \eta=0 \textnormal{ in } \left[\frac{3T}{4N}, T  \right].
	\end{equation*}
It is clear that $\textnormal{Supp} \, (\eta^\prime) \subset \big[\frac{T}{2N}, \frac{3T}{4N}\big]$.} 

Recall the adjoint system  \eqref{Adjoint-system-2}. Then, the  pair $(\widetilde \sigma, \widetilde \psi)$ with  $\widetilde \sigma=\eta \sigma$, $\widetilde \psi=\eta \psi$ satisfies the following set of equations
	\begin{align*}
		\begin{cases}
			-\widetilde \sigma_t + \gamma_1 \widetilde \sigma_{xxxx} - \widetilde \sigma_{xxx}  + \gamma_2 \widetilde \sigma_{xx} = b \widetilde \psi    -\eta{^\prime} \sigma,    &\text{in } \big(0,\frac{3T}{4N}\big)\times (0,1)  , \\
			-\widetilde \psi_{xx} + c \widetilde \psi = a \widetilde \sigma ,  &\text{in } \big(0,\frac{3T}{4N}\big)\times (0,1), \\
			\widetilde \sigma(t,0) = 	\widetilde \sigma(t,1) = 0, &\text{in } \big(0,\frac{3T}{4N}\big), \\ 
			\widetilde \sigma_x (t,0)   =	\widetilde \sigma_x (t,1)=0,  &\text{in } \big(0,\frac{3T}{4N}\big), \\
			\widetilde \psi(t,0) =\widetilde \psi(t,1) =0,   &\text{in } \big(0,\frac{3T}{4N}\big), \\
			\widetilde \sigma(T) = 0, &\text{in } (0,1).
		\end{cases}
	\end{align*}
Thanks to the \Cref{Prop-energy}, we have the following energy estimate for $\widetilde \sigma$,
{
	\begin{align}\label{eq:est_eta_forward}
		\|\eta \sigma\|_{L^\infty(0,\frac{3T}{4N};H_0^2(0,1))} 
		&	\leq 
			C e^{\frac{3CT}{4N}} \|\eta^\prime \sigma\|_{L^2( (0,\frac{3T}{4N})\times (0,1) ) } \notag \\ 
			&  \leq  
C e^{\frac{3C}{4}} \|\sigma\|_{L^2((\frac{T}{2N}, \frac{3T}{4N})  \times (0,1))},
	\end{align}
where we have used the fact that $\frac{T}{N}\leq 1$.} Moreover, the constant $C>0$ in \eqref{eq:est_eta_forward}  does not depend on $T$. 

From \eqref{eq:est_eta_forward}, we further  observe that 
\begin{align}\label{subsequent-1} 
	\|\eta \sigma\|^2_{L^\infty(0,\frac{3T}{4N};H_0^2(0,1))}
	& \leq  C  s^7\lambda^8\int_{\frac{T}{2N}}^{\frac{3T}{4N}} \int_0^1 e^{2s\mathfrak{S}_N} \mathfrak{Z}^{-7}_N e^{-2s\mathfrak{S}_N} \mathfrak{Z}^7_N|\sigma|^2  \notag \\   
	&  \leq  C \max_{\big[\frac{T}{2N}, \frac{3T}{4N}\big]\times [0,1]}  \left|e^{2s\mathfrak{S}_N} \mathfrak{Z}^{-7}_N\right|
	s^7\lambda^8 \int_{\frac{T}{2N}}^{T} \int_0^1 e^{-2s\mathfrak{S}_N} \mathfrak{Z}^7_N|\sigma|^2   \notag \\		 
	&\leq C \max_{\big[\frac{T}{2N}, \frac{3T}{4N}\big]\times [0,1]}  \left|e^{2s\mathfrak{S}_N} \mathfrak{Z}^{-7}_N\right| s^{15}\lambda^{16}  \int_0^T \int_{\omega_1} e^{-4s\mathfrak{S}^*_N+2s\widehat{\mathfrak{S}}_N} (\widehat{\mathfrak{Z}}_N)^{15} |\sigma|^2 ,  
\end{align}
where we have utilized the Carleman estimate  \eqref{carleman-T/2} to obtain the last inequality in \eqref{subsequent-1}.

\smallskip 

Let us now use the following: { (i) when $T\leq 1$, we take $N=1$ (see Remark \ref{remark_N}) and the maximum of $e^{2s\mathfrak{S}_1} {\mathfrak{Z}}^{-7}_1$ in  $\big[\frac{T}{2}, \frac{3T}{4}\big]$ occurs at  $t=\frac{3T}{4}$;   (ii) when $T > 1$, then of course we need to take $N\geq 2$ and  the maximum of the function $e^{2s\mathfrak{S}_N} {\mathfrak{Z}}^{-7}_N$ in  $\big[\frac{T}{2N}, \frac{3T}{4N}\big]$ occurs at $\frac{T}{2N}$. To be more precise,  we have 
\begin{align}\label{maximum}
	\begin{cases}
\text{when $T\leq 1$,} \ \ \max\limits_{\big[\frac{T}{2}, \frac{3T}{4}\big]\times [0,1]}  \left|e^{2s\mathfrak{S}_1} \mathfrak{Z}^{-7}_1\right| \leq C T^{14} e^{\frac{32s\widehat M}{3T^2}},
\\
\text{when $T> 1$,} \ \ \max\limits_{\big[\frac{T}{2N}, \frac{3T}{4N}\big]\times [0,1]}  \left|e^{2s\mathfrak{S}_N} \mathfrak{Z}^{-7}_N\right| \leq C T^{14} e^{\frac{8s\widehat M}{T^2}\frac{N^2}{(2N-1)}},
\end{cases}
\end{align}
with some constant $C>0$ that does depend on $N$ and the  quantities $\omega$, $a$, $b$, $c$, $\gamma_1$, $\gamma_2$ but not on T.}

We also observe that the maximum of $e^{-4s\mathfrak{S}^*_N+2s\widehat{\mathfrak{S}}_N} (\widehat{\mathfrak{Z}}_N)^{15}$ in $(0,T)$ occurs at $t=T/2$, and one has 
{
\begin{align}\label{maximum-2}
\max_{[0,T]\times [0,1]} 	\big|e^{-4s\mathfrak{S}^*_N+2s\widehat{\mathfrak{S}}_N} (\widehat{\mathfrak{Z}}_N)^{15}\big|  \leq \frac{C}{T^{30}} \, e^{-\frac{8s(2M^*-\widehat M)}{T^2}} .
\end{align} 
}
{ Combining \eqref{maximum}  and \eqref{maximum-2}, we have 
\begin{align}\label{maximum-3}
	\begin{cases}
	\text{when $T\leq 1$,} \ \ \max\limits_{\big[\frac{T}{2}, \frac{3T}{4}\big]\times [0,1]}  \left|e^{2s\mathfrak{S}_1} \mathfrak{Z}^{-7}_1\right| \times \max\limits_{[0,T]\times [0,1]} \big|	e^{-4s\mathfrak{S}^*_1+2s\widehat{\mathfrak{S}}_1} (\widehat{\mathfrak{Z}}_1)^{15} \big| 
	\leq \frac{C}{T^{16}} \, e^{\frac{8s}{T^2}\big[ \frac{7}{3}\widehat M - 2M^* \big]  } , 
	\\
	\text{when $T> 1$,} \ \ \max\limits_{\big[\frac{T}{2N}, \frac{3T}{4N}\big]\times [0,1]}  \left|e^{2s\mathfrak{S}_N} \mathfrak{Z}^{-7}_N\right| \times \max\limits_{[0,T]\times [0,1]} 	\big|e^{-4s\mathfrak{S}^*_N+2s\widehat{\mathfrak{S}}_N} (\widehat{\mathfrak{Z}}_N)^{15}\big| 
	\\ 
\qquad \qquad \qquad \qquad \qquad \qquad \qquad 
\qquad \qquad \qquad \qquad \qquad \qquad \quad \leq 
	  \frac{C}{T^{16}}\, e^{\frac{8s}{T^2}\big[(\frac{N^2+2N-1}{2N-1}) \widehat M - 2M^*\big]},
	\end{cases}
	\end{align}
}
where $\widehat M$ and $M^*$ are defined in \eqref{M-hat}--\eqref{M-star}.

 \smallskip

 {
Hereinafter, we fix $\lambda=\lambda_0$, $s= s_0=\mu_0(T+T^2)$ and using \eqref{maximum-3} in \eqref{subsequent-1}, we obtain 
\begin{align}\label{subsequent-2}
	\|\sigma(0,\cdot) \|^2_{H^2_0(0,1))} 
	&	\leq  \frac{C}{T^{16}}  e^{\frac{C s_0}{T^2}} \int_0^T \int_{\omega}   |\sigma|^2    \notag \\
	&\leq C e^{C/T} \int_0^T \int_{\omega}   |\sigma|^2 ,
\end{align}
 for some constant $C>0$  that may differ from previous but independent in $T$.}

Next, from the equation of $\widetilde \psi$, we get 
\begin{align*}
	\|\eta \psi(t, \cdot)\|_{H^2(0,1)} \leq |a| \| \eta \sigma \|_{L^\infty(0,\frac{3T}{4N}; L^2(0,1))} .
\end{align*} 
 This, combining with  the inequality \eqref{subsequent-2}, we deduce the required observability estimate \eqref{Partial-obser-1}.
  \end{proof}

Below, we sketch the proof of null-controllability for the corresponding linearized system when a control acts on the KS-KdV equation.  
 \begin{proof}[\bf Proof of \Cref{Thm-linear-control-KS}--\Cref{point-1}] 
  	Once we have the  observability inequality \eqref{Partial-obser-1}, 
  	then by standard duality argument, one can prove the existence of a null-control $h\in L^2((0,T)\times \omega)$ (in terms of the adjoint component $\sigma$ localized in $\omega$) 
  	for the linearized system \eqref{main}--\eqref{bd}--\eqref{in}. 
  	This method is standard and has been applied in several works; see for instance the pioneer works \cite{Fur-Ima} or \cite[Section 1.3]{Cara-Guerrero}.
  	
  {	Finally, we mention that the cost of control: $\displaystyle C e^{C/T}\|u_0\|_{H^{-2}(0,1)}$ is followed from the sharp observability inequality \eqref{Partial-obser-1}, where $C>0$ is independent in $T$. } 
  \end{proof}

\smallskip
  
\subsection{Null controllability of the nonlinear system: control in KS-KdV equation}\label{Section-nonlinear-KS}
In this section, by using the control cost   of the linear system, given by 
\Cref{Thm-linear-control-KS}--\Cref{point-1}, we shall prove the local null-controllability of our nonlinear model \eqref{nonlinear-KS}--\eqref{bd}--\eqref{in} for given initial data  $u_0\in H^{-1}(0,1)$. The proof will be based on  the  so-called source term method developed in \cite{Tucsnak-nonlinear} followed by a Banach fixed point argument. 
\subsubsection{\bf The source term method}  
 Let us discuss the source term method (see \cite{Tucsnak-nonlinear}) for our case.
We assume the constants  $p>0$, $q>1$  in such a way that 
\begin{align}\label{choice-p_q}
	1<q<\sqrt{2}, \ \ \text{and} \ \ p> \frac{q^2}{2-q^2}.
\end{align}
We also redenote  the constant appearing in the control estimate \eqref{control_estimate} of the linearized model by $K$, more precisely the control cost is given by {$\displaystyle Ke^{K/T}$} (to make difference with the generic constant $C$). Now define the functions 
\begin{align}\label{def_weight_func}
	\begin{cases}
		\rho_0(t) = e^{-\frac{pK}{(q-1)(T-t)}}, \\
		\rho_{\F}(t)= e^{-\frac{(1+p)q^2 K}{(q-1)(T-t)}}, 
	\end{cases}
	\qquad \forall t \in \left[ T\left(1-\frac{1}{q^2}  \right), T\right],
\end{align} 
extended in $\left[0, T(1-1/q^2) \right]$ in a constant way  such that the functions $\rho_0$ and $\rho_{\F}$ are continuous and non-increasing in $[0,T]$ with $\rho_0(T)=\rho_{\F}(T)=0$. 
\begin{remark}\label{Remark-weights}
	We compute that
	\begin{align*}
		\frac{\rho_0^2(t)}{\rho_\F(t)}= e^{\frac{q^2 K + pK(q^2-2)}{(q-1)(T-t)}}, \quad \forall t \in \left[T\left(1-\frac{1}{q^2}\right) , T\right],
	\end{align*} 
	Due to the choices of $p,q$ in \eqref{choice-p_q}, we have $K\big(q^2+ p(q^2-2)\big)<0$, $(q-1)>0$ and therefore we can conclude that
	\begin{align*}
		\frac{\rho^2_0(t)}{\rho_{\F}(t)} \leq 1, \quad \forall t \in [0,T]. 
	\end{align*}
\end{remark}

Let us write the following Gelfand tripel  
\begin{align}
	H^3(0,1)\cap H^2_0(0,1)\subset H^{-1}(0,1)\subset  \left(H^3(0,1)\cap H^2_0(0,1)\right)^\prime , 
\end{align} 
and denote $X:= [L^2(0,1)]^2$. Then, we define the following weighted spaces: 
\begin{subequations}
	\begin{align}
		\label{space_F}
		&\F:= \left\{f\in L^2(0,T; (H^{3}(0,1)\cap H^2_0(0,1))') \ \Big| \ \frac{f}{\rho_\F} \in L^2(0,T; (H^{3}(0,1)\cap H^2_0(0,1))')  \right\}, \\
		\label{space_Y} 
	&	\Y := \left\{ (u,v)\in L^2(0,T; X)  \ \Big| \ \frac{(u,v)}{\rho_0} \in L^2(0,T; X)  \right\}, \\
		\label{space_V}
	&	\V:= \left\{ h\in L^2((0,T) \times \omega)  \ \Big| \ \frac{h}{\rho_0}\in L^2((0,T) \times \omega)  \right\}.
	\end{align}
\end{subequations}
It is clear that the aforementioned spaces are  Hilbert spaces. In particular,
the inner products in  $\F$ and $\V$ are respectively defined by 
\begin{align*}
	\langle f_1, f_2 \rangle_{\F} := \int_0^T  \rho^{-2}_\F \langle f_1, f_2 \rangle_{(H^{3}(0,1)\cap H^2_0(0,1))'}, \quad 
		\langle h_1, h_2 \rangle_{\V} := \int_0^T\int_\omega  \rho^{-2}_0 h_1 h_2,
\end{align*}
 and consequently, the norms by
 \begin{align*}
 	\|f\|_{\F} := \left(\int_0^T  \rho^{-2}_\F \|f\|^2_{(H^{3}(0,1)\cap H^2_0(0,1))'}\right)^{1/2}, \quad 
 	\|h\|_{\V} := \left(\int_0^T\int_\omega  \rho^{-2}_0 |h|^2\right)^{1/2}.
 \end{align*}

Let us now consider the following system (still linear) with a source term $f$ in the right hand side of the KS-KdV equation:
 \begin{equation}\label{System-source}
 	\begin{cases}
 		u_t+\gamma_1 u_{xxxx}+u_{xxx}+\gamma_2 u_{xx} = f+ a v+\chi_{\omega} h, &\text{in } Q_T,\\
 		-v_{xx}+ c v= b u, & \text{in } Q_T,\\ 
 		u(t, 0)=0,  \  u(t, 1)=0,  & t \in (0, T),\\
 		u_x(t,0)=0, \  u_x(t,1)=0, & t \in (0, T),\\
 		v(t, 0)=0,  \  v(t, 1)=0, & t \in (0, T),\\
 		u(0, x)=u_0(x), & x \in  (0, 1).
 	\end{cases}
 \end{equation}

The classical parabolic regularity gives the following result (see \cite[Chapter 3]{DLv5} or \cite[Section 3, Chapter II]{RT}  for more details).
\begin{proposition}\label{prop-esti-regular}
	For any given initial data $u_0 \in H^{-1}(0,1)$, source term $f \in L^2(0,T; (H^{3}\cap H^2_0)^\prime)$ and $h\in L^2((0,T)\times \omega)$,  there exists unique solution $(u,v)$ of \eqref{System-source} such that
	\begin{align}\label{sol-u}
		& u \in \C^0([0,T]; H^{-1}(0,1)) \cap L^2(0,T; H^1_0(0,1)) \cap H^1(0,T;(H^{3}(0,1)\cap H^2_0(0,1))^\prime), \\
		\label{sol-v}
		& v \in \C^0([0,T]; H^{-1}(0,1)) \cap L^2(0,T; H^2(0,1)\cap H^1_0(0,1)).
	\end{align} 
	In addition, we have the following estimate
	\begin{multline*}
		\left\|{u} \right\|_{\C^0(H^{-1}) \cap L^2(H^1_0) \cap H^1((H^{3}\cap H^2_0)^\prime)} + 	\left\|v \right\|_{\C^0( H^{-1}) \cap L^2(H^2\cap H^1_0)} \\
		\leq  Ce^{CT} \left(\|u_0\|_{H^{-1}(0,1)} + \left\|f \right\|_{L^2((H^{3}\cap H^2_0)^\prime)} + \|h\|_{L^2((0,T)\times \omega  )}\right),
	\end{multline*}
for some constant $C>0$  independent in  $T$. 
\end{proposition}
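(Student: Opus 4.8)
The plan is to decouple the system \eqref{System-source} into a single fourth-order parabolic equation for $u$ and then to invoke the classical analytic-semigroup maximal regularity theory in a suitably shifted Sobolev scale. \textbf{Decoupling.} Since $c>-\pi^2$, the elliptic operator $L:=-\partial_{xx}+cI$ with homogeneous Dirichlet conditions is an isomorphism and $L^{-1}$ is smoothing: it maps $H^s(0,1)$ continuously into $H^{s+2}(0,1)$ (intersected with the boundary conditions) for the relevant range of $s$, and in particular $L^{-1}\colon H^{-1}\to H^1_0$ is bounded. Writing $v=bL^{-1}u$ and substituting into the first equation reduces \eqref{System-source} to the scalar problem
\begin{equation*}
	u_t+\gamma_1 u_{xxxx}+u_{xxx}+\gamma_2 u_{xx}-ab\,L^{-1}u = F, \qquad F:=f+\chi_\omega h,
\end{equation*}
with the clamped boundary conditions and $u(0)=u_0$. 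Observe that $\chi_\omega h\in L^2(Q_T)\hookrightarrow L^2(0,T;(H^3\cap H^2_0)')$, so that $F\in L^2(0,T;(H^3\cap H^2_0)')$, the same space in which $f$ lives.

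Next I would set up the generator. Let $\mc A_+u=\gamma_1 u_{xxxx}+u_{xxx}+\gamma_2 u_{xx}-abL^{-1}u$ on $L^2(0,1)$ with domain $D(\mc A_+)=\{u\in H^4: u(0)=u(1)=u_x(0)=u_x(1)=0\}$. Its leading part $\gamma_1\partial_{xxxx}$ is a positive self-adjoint operator, hence sectorial; the lower-order terms $u_{xxx}$ and $\gamma_2 u_{xx}$ are relatively $\mc A_+$-bounded with relative bound zero, and $abL^{-1}$ is bounded (in fact smoothing) on $L^2(0,1)$. Therefore $-\mc A_+$ generates an analytic semigroup and the forward problem is well posed. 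Identifying the fractional power domains with Sobolev spaces gives $D(\mc A_+^{1/4})=H^1_0$ and $D(\mc A_+^{3/4})=H^3\cap H^2_0$; dualizing with $L^2$ as pivot yields $D(\mc A_+^{-1/4})=H^{-1}$ and $D(\mc A_+^{-3/4})=(H^3\cap H^2_0)'$.

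With this, the data satisfy $u_0\in D(\mc A_+^{-1/4})$ and $F\in L^2(0,T;D(\mc A_+^{-3/4}))$, so the standard $L^2$ maximal regularity for analytic semigroups, applied in the shifted scale (this is the content of \cite{DLv5,RT}), produces a unique
\begin{equation*}
	u\in L^2\big(0,T;D(\mc A_+^{1/4})\big)\cap H^1\big(0,T;D(\mc A_+^{-3/4})\big)\hookrightarrow \C^0\big([0,T];D(\mc A_+^{-1/4})\big),
\end{equation*}
together with the estimate controlling these norms by $\|u_0\|_{H^{-1}}+\|F\|_{L^2((H^3\cap H^2_0)')}$; rewriting the fractional spaces this is exactly \eqref{sol-u} with the announced bound. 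To recover $v$ I set $v=bL^{-1}u$: from $u\in\C^0(H^{-1})$ the smoothing of $L^{-1}$ gives $v\in\C^0(H^1)\hookrightarrow\C^0(H^{-1})$, while $u\in L^2(H^1_0)\hookrightarrow L^2(L^2)$ and elliptic regularity give $v\in L^2(H^2\cap H^1_0)$, which is \eqref{sol-v}. Collecting the two groups of bounds gives the stated estimate, and uniqueness follows from linearity (or is already contained in the maximal regularity statement).

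The hard part will be the low regularity of the data: because $u_0$ and $f$ live in negative-order spaces, the conclusion cannot be reached by naive $L^2$ energy estimates, and one has to work in the negative portion of the fractional power scale, checking carefully that $D(\mc A_+^{\pm 1/4})$ and $D(\mc A_+^{\pm 3/4})$ coincide, with equivalent norms and the correct boundary conditions, with the Sobolev spaces appearing in the statement. By contrast the feedback coupling through $v$ is harmless, precisely because $L^{-1}$ is two orders smoothing and hence only a relatively compact perturbation of the fourth-order part.
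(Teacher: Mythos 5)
Your proposal is correct and is essentially the proof the paper has in mind: the paper writes out no argument at all, merely invoking ``classical parabolic regularity'' with citations to \cite{DLv5} and \cite{RT}, and your decoupling $v=bL^{-1}u$ followed by analytic-semigroup maximal regularity in the shifted fractional-power scale (base space $D(\mc A_+^{-3/4})$, solution space $L^2(D(\mc A_+^{1/4}))\cap H^1(D(\mc A_+^{-3/4}))\hookrightarrow \C^0(D(\mc A_+^{-1/4}))$) is exactly that classical theory, with the correct bookkeeping of spaces. The single technical point you flag --- identifying $D(\mc A_+^{\pm 1/4})$ and $D(\mc A_+^{\pm 3/4})$ with $H^1_0$, $H^{-1}$, $H^3\cap H^2_0$, $(H^3\cap H^2_0)'$ although $\mc A_+$ is not self-adjoint --- is real but standard (fractional power domains are stable under relatively bounded lower-order perturbations, e.g.\ Henry's perturbation theorem, and the negative powers follow by dualizing against $\mc A_+^*$), or can be bypassed altogether by taking the clamped operator $\gamma_1\partial_{xxxx}$ as the generator and absorbing $u_{xxx}+\gamma_2 u_{xx}-ab\,L^{-1}u$ into the source term via interpolation and Gronwall, so it does not constitute a gap.
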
 

Using the above proposition and the embedding 
\begin{align}\label{embedding}
	\Big\{u\in L^2(0,T; H^1_0(0,1)) \mid u_t \in L^2(0,T; (H^{3}(0,1)\cap H^2_0(0,1))') \Big\} \hookrightarrow L^4(0,T; L^2(0,1)), 
	\end{align}
we deduce   that  $u\in L^4(0,T; L^2(0,1))$.

Next, by the following proposition we shall obtain the existence of a control $h\in \V$  for the system \eqref{System-source} with  given source term $f\in \F$ and initial data $u_0 \in H^{-1}(0,1)$.  Observe that, we choose here slightly higher regular initial data than the linear system (see \Cref{Thm-linear-control-KS}). The reason is to handle the nonlinear term $uu_x$ in our system \eqref{nonlinear-KS}--\eqref{bd}--\eqref{in}.

Let us  prove the following result. 
\begin{proposition}\label{Proposition-weighted}
	Let $T>0$,  $u_0 \in H^{-1}(0,1)$  and $f\in \F$ be given. We also take any $(a,b)\in \mathbb R^2$ with $b\neq 0$. Then, there exists a linear map 
	\begin{align}\label{map-linearity}
		\left[u_0, f\right] \in H^{-1}(0,1) \times \F \mapsto \left[(u,v), h  \right] \in \Y \times \V,
	\end{align}
	such that $\left[(u,v), h  \right]$ solves \eqref{System-source}. Moreover, we have the following regularity estimate
	\begin{multline}\label{estimate-weighted}
		\left\|\frac{u}{\rho_0} \right\|_{\C^0(H^{-1}) \cap L^2(H^1_0) \cap H^1((H^{3}\cap H^2_0)')} + 	\left\|\frac{v}{\rho_0} \right\|_{\C^0(H^{-1}) \cap L^2(H^2\cap H^1_0)} + \left\|\frac{h}{\rho_0} \right\|_{L^2((0,T)\times \omega)}\\
		\leq   C_1  \left(\|u_0\|_{H^{-1}(0,1)} + \left\|\frac{f}{\rho_\F} \right\|_{L^2((H^{3}\cap H^2_0)')} \right),
	\end{multline}
where the constant $C_1>0$ does not depend on $u_0$, $f$, $h$. 
\end{proposition}
\begin{proof}
	Let any $T>0$ be given. Then, we define the sequence $(T_k)_{k\geq 0}$ as 
	\begin{align}\label{def-T-k}
		T_k = T - \frac{T}{q^k}, \qquad \forall k\geq 0,
		\end{align}
	where $q$ is given by \eqref{choice-p_q}. With this $T_k$, it can be shown that the weight functions  $\rho_0$ and $\rho_\F$ enjoy the following relation:
\begin{align}\label{relation_rho_0_rho_F}
	\rho_0(T_{k+2}) =  \rho_{\F}(T_k) e^{\frac{K}{T_{k+2}-T_{k+1}}} , \qquad \forall k\geq 0.
\end{align}
	We also define  a sequence $(m_k)_{k\geq 0}$ with
\begin{align}\label{sequence_m_k}
	m_0 = u_0 \in H^{-1}(0,1), \qquad m_{k+1} =  \widetilde u( T^{-}_{k+1}), \quad \forall k \geq 0 ,
\end{align}
where $\left(\widetilde u, \widetilde v\right)$ is such that 
\begin{align*}
 &\widetilde u \in \C^0([T_k, T_{k+1}]; H^{-1}(0,1)) \cap L^2(T_k,T_{k+1}; H^1_0(0,1)) \cap H^1(T_k,T_{k+1}; (H^{3}(0,1)\cap H^2_0(0,1))'), \\
& \widetilde v \in \C^0([T_k, T_{k+1}]; H^{-1}(0,1)) \cap L^2(T_k, T_{k+1}; H^2(0,1)\cap H^1_0(0,1)), 
 \end{align*}
and that $(\widetilde u, \widetilde v)$ uniquely satisfies the following set of equations 
 \begin{equation}\label{System-source-no-control}
	\begin{cases}
		\widetilde u_t+\gamma_1\widetilde u_{xxxx}+ \widetilde u_{xxx}+\gamma_2 \widetilde u_{xx} = f+ a \widetilde v &\text{in }    (T_k, T_{k+1})\times (0,1),\\
		-\widetilde v_{xx}+ c\widetilde v= b \widetilde u, & \text{in }  (T_k, T_{k+1})\times (0,1),\\ 
	\widetilde	u(t, 0)=0,  \  \widetilde u(t, 1)=0,  & t \in  (T_k, T_{k+1}),\\
	\widetilde	u_x(t,0)=0, \ \widetilde u_x(t,1)=0 & t \in (T_k, T_{k+1}) \\
	\widetilde	v(t, 0)=0,  \  \widetilde v(t, 1)=0 & t \in  (T_k, T_{k+1}),\\
		\widetilde u(T^+_{k}, x) = 0, & x \in  (0, 1).
	\end{cases}
\end{equation}
 Moreover, it satisfies the following regularity estimate (using \Cref{prop-esti-regular}) 
 \begin{align*}
 	\left\|{\widetilde u} \right\|_{\C^0([T_k, T_{k+1}]; H^{-1}) \cap L^2(T_k, T_{k+1}; H^1_0) \cap H^1(T_k, T_{k+1}; (H^{3}\cap H^2_0)')} + 	\left\|\widetilde v \right\|_{\C^0([T_k, T_{k+1}]; H^{-1}) \cap L^2(T_k, T_{k+1}; H^2\cap H^1_0)}\\ 
 	\leq   Ce^{CT}  \left\|f \right\|_{L^2(T_k, T_{k+1}; (H^{3}\cap H^2_0)^\prime)} , 
 \end{align*}
for all $k\geq 0$. In particular,  we have 
\begin{align}\label{esti-m-k}
	\left\|m_{k+1} \right\|_{H^{-1}(0,1)} \leq   Ce^{CT} \left\|f \right\|_{L^2(T_k, T_{k+1}; (H^{3}\cap H^2_0)')} , \quad \forall k\geq 0.
\end{align} 

\noindent 
$\bullet$ {\em Weighted estimate of the control.}
 Let us  consider the following control system in the time interval $(T_k, T_{k+1})$ for all $k\geq 0$,
 \begin{equation}\label{eq tk}
 	\begin{cases}
 		\bar u_t+\gamma_1\bar u_{xxxx}+ \bar u_{xxx}+\gamma_2 \bar u_{xx} = \chi_{\omega} h_k+ a \bar v &\text{in }    (T_k, T_{k+1})\times (0,1),\\
 		-\bar v_{xx}+ c\bar v= b \bar u, & \text{in }  (T_k, T_{k+1})\times (0,1),\\ 
 		\bar	u(t, 0)=0,  \  \bar u(t, 1)=0,  & t \in  (T_k, T_{k+1}),\\
 		\bar	u_x(t,0)=0, \ \bar u_x(t,1)=0 & t \in (T_k, T_{k+1}) \\
 		\bar	v(t, 0)=0,  \  \bar v(t, 1)=0 & t \in  (T_k, T_{k+1}),\\
 		\bar u(T^+_{k}, x) = m_k, & x \in  (0, 1).
 	\end{cases}
 \end{equation}
By using \Cref{Thm-linear-control-KS}--\Cref{point-1}, we have the existence of a control $h_k$ that satisfies 
\begin{align}\label{control_estimate tk}
	\|h_k\|_{L^2((T_k, T_{k+1})\times \omega)} \leq K e^{K/\left( T_{k+1}-T_k\right)} \|m_k\|_{H^{-1}(0,1)}, \quad \forall k\geq 0, 
\end{align}
where the constant $C>0$ (appearing in \Cref{Thm-linear-control-KS}) neither depends on $T$ nor on $u_0$. As a result,  the solution to  \eqref{eq tk} satisfies
\begin{align}\label{controllability-kth}
	\left(\bar u(T^-_{k+1}, x), \bar v(T^-_{k+1}, x)\right)=(0, 0), \quad \forall x \in (0,1), \quad \forall k\geq 0. 
\end{align}
Now, combining \eqref{esti-m-k} and \eqref{control_estimate tk}, 
  we have for all $k\geq 0$, 
 \begin{align}\label{hk1}
 	\|h_{k+1}\|_{L^2((T_{k+1},T_{k+2})\times \omega)} &\leq K  e^{K/\left( T_{k+2}-T_{k+1}\right)} \|m_{k+1}\|_{H^{-1}(0,1)}    \notag \\
 	&\leq {C e^{CT} e^{K/\left( T_{k+2}-T_{k+1}\right)} }\left\|f \right\|_{L^2(T_k, T_{k+1}; (H^{3}\cap H^2_0)')} \notag \\
 	& \leq C e^{CT} {e^{K/(T_{k+2} - T_{k+1})}} \rho_{\F}(T_k) \left\|\frac{f}{\rho_{\F}} \right\|_{L^2(T_k, T_{k+1}; (H^{3}\cap H^2_0)')},
 \end{align}
since $\rho_\F$ is a non-increasing function. Then, using the relation \eqref{relation_rho_0_rho_F}, one can write 
\begin{equation}
	\norm{{h}_{k+1}}_{L^2((T_{k+1},T_{k+2})\times \omega)}\leq C e^{CT}\rho_0(T_{k+2})\norm{\frac{f}{\rho_{\F}}}_{L^2(T_k,T_{k+1};(H^{3}\cap H^2_0)')}, \quad \forall k \geq 0.  
\end{equation}
 Again, since $\rho_0$ is also non-increasing, we deduce that 
\begin{align}\label{esti-contr-k}
	\norm{\frac{{h}_{k+1}}{\rho_0}}_{L^2((T_{k+1},T_{k+2})\times \omega)} & \leq\frac{1}{\rho_0(T_{k+2})}\norm{{h}_{k+1}}_{L^2((T_{k+1},T_{k+2})\times \omega)}  \notag \\
	&\leq C e^{CT} \norm{\frac{f}{\rho_{\F}}}_{L^2(T_k,T_{k+1};(H^{3}\cap H^2_0)')}, \text{ for all } k \geq 0.
\end{align}
We now define the control function ${h}$ as
\begin{equation}
	h:=\sum_{k\geq 0} {h}_k \chi_{(T_k,T_{k+1})},   \quad \text{in } \ (0,T)\times \omega, 
\end{equation}
where $\chi$ denotes the characteristic function. 

Recall that, we have the $L^2$- estimates of $\displaystyle \frac{{h}_{k+1}}{\rho_0}$ for all $k\geq0$. It remains to find the $L^2$- estimate of $\displaystyle \frac{{h}_0}{\rho_0}$.
 From the control  estimate \eqref{control_estimate tk} and since $\displaystyle \rho_0(T_1) = e^{-\frac{pqK}{T(q-1)}}$ and $\displaystyle T_1=T\left(1-\frac{1}{q}\right)$ (from \eqref{def-T-k}), we get
\begin{align}\label{esti-h_0}
	\norm{{h}_0}_{L^2((0,T_1)\times \omega)} & \leq {Ke^{\frac{K}{T_1}} }\norm{u_0}_{H^{-1}(0,1)}  \notag \\
	& =  {Ke^{ \frac{1}{T} \frac{q(1+p)K}{(q-1)}} \rho_0(T_1)} \|u_0\|_{H^{-1}(0,1)} \notag  \\
	& \leq {K e^{\frac{K_1}{T}}} \rho_0(T_1) \|u_0\|_{H^{-1}(0,1)}, 
\end{align}
where $K_1 := \frac{q(1+p)K}{(q-1)}> K$. But $\rho_0$ is non-increasing function in $(0,T_1)$ which yields 
\begin{equation}\label{esti-con-0}
	\norm{\frac{{h}_0}{\rho_0}}_{L^2((0,T_1)\times \omega)}\leq {K e^{\frac{K_1}{T}}} \norm{u_0}_{H^{-1}(0,1)}.
\end{equation}
Combining  the estimates \eqref{esti-con-0} and \eqref{esti-contr-k},  we obtain 
\begin{align}\label{wght_ctl_est}
\notag	\norm{\frac{h}{\rho_0}}_{L^2((0,T)\times \omega)}
\leq
\sum_{k\geq0}\norm{\frac{{h}_{k}}{\rho_0}}_{L^2((T_{k},T_{k+1})\times \omega )}
&\leq {K e^{\frac{K_1}{T}}} \norm{u_0}_{H^{-1}(0,1)}+ C e^{CT} \sum_{k\geq0} \norm{\frac{f}{\rho_{\F}}}_{L^2(T_k,T_{k+1}; (H^{3}\cap H^2_0)')}\\
	&\leq C  e^{C(T+\frac{1}{T})}  \left(\norm{u_0}_{H^{-1}(0,1)}+\norm{\frac{f}{\rho_{\F}}}_{L^2(0,T;(H^{3}\cap H^2_0)')}\right),
\end{align}
for some constant $C>0$ independent in $T$. 

\noindent 
$\bullet$ {\em Weighted estimate of the solution.}
Let us set 
\begin{equation}
	(u,v)=(\widetilde{u},\widetilde{v})+(\bar{u},\bar{v}),
\end{equation}
which  satisfies the following  control system with the source term $f$, given by 
\begin{equation}\label{System-source fixed}
	\begin{cases}
		u_t+\gamma_1 u_{xxxx}+u_{xxx}+\gamma_2 u_{xx} = f + a v+\chi_{\omega} h_k, &\text{ in }\ (T_k,T_{k+1})\times(0,1),\\
		-v_{xx}+ c v= b u, & \text{ in }\ (T_k,T_{k+1})\times(0,1),\\ 
		u(t, 0)=0,  \  u(t, 1)=0,  & \text{ in }\ (T_k,T_{k+1}),\\
		u_x(t,0)=0, \  u_x(t,1)=0, & \text{ in }\ (T_k,T_{k+1}),\\
		v(t, 0)=0,  \  v(t, 1)=0, & \text{ in }\ (T_k,T_{k+1}),\\
	 u(T_k, x) = m_k, & \text{ in } (0, 1).
	\end{cases}
\end{equation}
for all $k\geq0$. Note that, the solution $(u,v)$ satisfies
\begin{equation*}
	u(T_0)=m_0
\end{equation*}
and, using \eqref{controllability-kth} one has  for all $k\geq0$, that
\begin{align*}
	u(T^-_{k+1})=\widetilde{u}(T^-_{k+1}) + \bar{u}(T^-_{k+1}) = m_{k+1}, \\
	u(T^+_{k+1}) = \widetilde{u}(T^+_{k+1}) + \bar{u}(T^+_{k+1}) = m_{k+1}.
\end{align*}
This gives the continuity of the component $u$ at each $T_{k+1}$ for $k\geq 0$, more precisely 
$$  u \in  \C^0([T_{k},T_{k+1}]; H^{-1}(0,1)). $$
Once we have this, the elliptic part $v$ enjoys
$$v(T_{k}) =  b\left(-\partial_{xx} + c I \right)^{-1} u(T_{k}) \in H^{-1}(0,1), \quad \forall k\geq 0,$$
which yields  the continuity of $v$ at time $T_k$ ($k\geq 0$). 
Moreover, we have the following estimate (thanks to \Cref{prop-esti-regular}) for all $k\geq 0$: 
\begin{align}\label{esti-sol-t-k}
	&\norm{u}_{\C^0([T_{k},T_{k+1}];H^{-1}) \cap L^2(T_{k},T_{k+1};{H^1_0}) \cap H^1(T_k,T_{k+1}; (H^{3}\cap H^2_0)')  }
 + \norm{v}_{\C^0([T_{k},T_{k+1}];H^{-1}) \cap L^2(T_k, T_{k+1}; (H^2\cap H^1_0))}  \notag \\
	&\leq  C e^{CT} \left(\norm{m_{k}}_{H^{-1}(0,1)}+\norm{{h}_{k}}_{L^2((T_{k},T_{k+1})\times \omega)}+\norm{f}_{L^2(T_{k},T_{k+1};(H^{3}\cap H^2_0)')}\right).  
	\end{align}
We start with $k\geq 1$; using the estimates of $m_k$ and $h_k$ from \eqref{esti-m-k} and \eqref{control_estimate tk} (resp.), we get 
	\begin{align*} 
	&	\norm{u}_{\C^0([T_{k},T_{k+1}];H^{-1}) \cap L^2(T_{k},T_{k+1};{H^1_0}) \cap H^1(T_k,T_{k+1}; (H^{3}\cap H^2_0)')  }
		+ \norm{v}_{\C^0([T_{k},T_{k+1}];H^{-1}) \cap L^2(T_k, T_{k+1}; (H^2\cap H^1_0))} \\
	&\leq Ce^{CT} e^{\frac{K}{T_{k+1}-T_{k}}}\norm{f}_{L^2(T_{k-1},T_{k}; (H^{3}\cap H^2_0)')}+ Ce^{CT}  \norm{f}_{L^2(T_{k},T_{k+1}; (H^{3}\cap H^2_0)')}\\
	&\leq Ce^{CT} \rho_\F(T_{k-1}) e^{\frac{K}{T_{k+1}-T_{k}}}\norm{\frac{f}{\rho_\F}}_{L^2(T_{k-1},T_{k+1}; (H^{3}\cap H^2_0)')},
\end{align*}
 since $\rho_{\F}$ is non-increasing in $(0,T)$.   But $\rho_0(T_{k+1})=e^{\frac{K}{T_{k+1}-T_{k}}}\rho_{\F}(T_{k-1})$  for all $k\geq 1$ and 
   therefore
\begin{align}\label{esti-sol-k-k+1}
	& \norm{\frac{u}{\rho_0}}_{\C^0([T_{k},T_{k+1}];H^{-1}) \cap L^2(T_{k},T_{k+1};{H^1_0}) \cap H^1(T_k,T_{k+1}; \left(H^{3}\cap H^2_0\right)')  }
+ \norm{\frac{v}{\rho_0}}_{\C^0([T_{k},T_{k+1}];H^{-1}) \cap L^2(T_k, T_{k+1}; (H^2\cap H^1_0))}  \notag \\
 & \leq   Ce^{CT} \norm{\frac{f}{\rho_{\F}}}_{L^2(T_{k-1},T_{k+1};(H^{3}\cap H^2_0)')}, \qquad \forall k\geq 1, \,\, \text{ since } \rho_0 \text{ is also non-increasing  in } (0,T).
\end{align}

Finally, using the estimate of $h_0$ from \eqref{esti-h_0}, one can deduce that 
\begin{align*}
	& \norm{u}_{\C^0([0,T_{1}];H^{-1}) \cap L^2(0,T_{1};{H^1_0}) \cap H^1(0,T_{1}; (H^{3}\cap H^2_0)')}
	+ \norm{v}_{\C^0([0,T_{1}];H^{-1}) \cap L^2(0, T_{1}; (H^2\cap H^1_0))} \\
	& \leq   C e^{CT}  e^{\frac{K_1}{T}} \rho_0(T_1) \left(\|u_0\|_{H^{-1}(0,1)} + \|f\|_{L^2(0,T_1; (H^{3}\cap H^2_0)')} \right).
\end{align*}
But $\rho_0$ is non-increasing function and it is easy to observe that \begin{align*}\displaystyle \|f\|_{L^2(0,T_1; (H^{3}\cap H^2_0)^\prime)} \leq \norm{\frac{f}{\rho_\F}}_{L^2(0,T_1; (H^{3}\cap H^2_0)^\prime)},
\end{align*}
 which  leads to the following:
\begin{align}\label{esti-sol-0-T_1}
	& \norm{\frac{u}{\rho_0}}_{\C^0([0,T_{1}];H^{-1}) \cap L^2(0,T_{1};{H^1_0}) \cap H^1(0,T_{1}; (H^{3}\cap H^2_0)'))  }
+ \norm{\frac{v}{\rho_0}}_{\C^0([0,T_{1}];H^{-1}) \cap L^2(0, T_{1}; (H^2\cap H^1_0))} \notag  \\
& \leq   Ce^{CT} e^{\frac{K_1}{T}}\left(\|u_0\|_{H^{-1}(0,1)} + \norm{\frac{f}{\rho_\F}}_{L^2(0,T_1; (H^{3}\cap H^2_0)')} \right). 
\end{align}
Combining the estimates \eqref{esti-sol-k-k+1} and \eqref{esti-sol-0-T_1}, we obtain
\begin{align}\label{esti-sol-0-T}
	& \norm{\frac{u}{\rho_0}}_{\C^0([0,T];H^{-1}) \cap L^2(0,T;{H^1_0}) \cap H^1(0,T; (H^{3}\cap H^2_0)')  }
	+ \norm{\frac{v}{\rho_0}}_{\C^0([0,T];H^{-1}) \cap L^2(0, T; (H^2\cap H^1_0))} \notag  \\
	& \leq  C e^{C(T+\frac{1}{T})} \left(\|u_0\|_{H^{-1}(0,1)} + \norm{\frac{f}{\rho_\F}}_{L^2(0, T; (H^{3}\cap H^2_0)')} \right).
\end{align}
In the above and in \eqref{wght_ctl_est}, we denote $\displaystyle C_1:= C e^{C(T+\frac{1}{T})}$ and in what follows, we have the required weighted estimate  \eqref{estimate-weighted}.
The proof is complete.
\end{proof}

\subsubsection{\bf{Application of Banach fixed point theorem}}
Recall that our nonlinear system when a control  acts in KS equation is:
\begin{equation}\label{non lin}
	\begin{cases}
		u_t+\gamma_1 u_{xxxx}+u_{xxx}+\gamma_2 u_{xx}+uu_x= a v+\chi_{\omega} h, &\text{  in } Q_T,\\
		-v_{xx}+ c v= b u, & \text{ in } Q_T,\\ 
	u(t, 0)=0,  \  u(t, 1)=0,  & t \in (0, T),\\
		u_x(t,0)=0, \  u_x(t,1)=0, & t \in (0, T),\\
		v(t, 0)=0,  \  v(t, 1)=0, & t \in (0, T),\\
	u(0, x)=u_0(x), & x \in  (0, 1).
	\end{cases}
\end{equation}
Let us formally test the equation of $(u,v)$ in \eqref{non lin} by any $(\vphi, \theta)\in \big[H^4(0,1)\cap H^2_0(0,1)\big]\times \big[ H^2(0,1)\cap H^1_0(0,1) \big]$, leading to
\begin{multline}
	\frac{d}{dt} \int_0^1 u \vphi + \gamma_1 \int_0^1 u \vphi_{xxxx} - \int_0^1 u \vphi_{xxx} - \frac{1}{2} \int_0^1 u^2 \vphi_x  - \int_0^1 v \theta_{xx} + c\int_0^1 v \theta \\
	 = a\int_0^1 v \vphi + b \int_0^1 u \theta + \int_\omega h \vphi .
\end{multline}
We now define the nonlinearity $uu_x$ in terms of the following function: $F: L^2(0,1) \to H^{-2}(0,1)$ such that
\begin{align}\label{function_nonlinear} 
\left\langle F(u), \vphi \right\rangle_{H^{-2}, H^2_0} = - \frac{1}{2} \int_0^1 u^2 \vphi_x , \quad \forall \vphi \in H^2_0(0,1).
\end{align}
%
%
%
%
Now, recall that the system \eqref{System-source} has unique solution $(u,v)$ given by \eqref{sol-u}--\eqref{sol-v} and moreover, $u\in L^4(0,T; L^2(0,1))$ (see \eqref{embedding}). 
Therefore, one has 
\begin{align*}
	\left|  \left\langle F(u), \vphi \right\rangle_{H^{-2}, H^2_0}  \right| \leq \frac{1}{2}\|\vphi_x\|_{L^\infty(0,1)} \|u(t)\|^2_{L^2(0,1)} \leq C_2 \|\vphi\|_{H^2_0(0,1)} \|u(t)\|^2_{L^2(0,1)}  ,
	\end{align*}
for some constant $C_2>0$, 
since $H^1_0(0,1)\hookrightarrow L^\infty(0,1)$. This yields 
\begin{align}\label{Well-defined_F}
	\|F(u)\|_{L^2(0,T; H^{-2}(0,1))} \leq C_2 \|u\|^2_{L^4(0,T; L^2(0,1))} ,
\end{align}
that is the function $F$ is well-defined. In what follows, we  define the map 
\begin{align}\label{map-fixed_point}
	\Lambda &: \F \to \F \\
	        &  f \mapsto -F(u) \notag, 
\end{align} 
where $u$ is the solution of the system \eqref{System-source}.
 We also consider an open ball $B(0,R)$ in $\F$ with center $0$ and radius $R>0$. We begin with the following lemma. 
\begin{lemma}[Stability]\label{Lemma-stability} 
 There exists some $R>0$ such that $\overline{B(0,R)} \subset \F$ is stable under the map $\Lambda$. In other words, $\Lambda\big(\overline{B(0,R)}\big) \subset \overline{B(0,R)}$.
\end{lemma}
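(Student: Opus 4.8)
The plan is to prove stability by deriving a single \emph{quadratic} estimate of the form $\|\Lambda(f)\|_{\F} \le C_3\big(\|u_0\|_{H^{-1}(0,1)} + \|f\|_{\F}\big)^2$ and then closing it for $R$ small. Given $f \in \overline{B(0,R)}$, let $\big[(u,v),h\big]$ be the controlled triple furnished by \Cref{Proposition-weighted}, so that $\Lambda(f) = -F(u)$ with $F$ as in \eqref{function_nonlinear}. The whole argument reduces to estimating the weighted $\F$-norm of $F(u)$ in terms of the weighted solution norm of $u$.

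First I would record the pointwise-in-time bound for the nonlinearity. From \eqref{function_nonlinear} and $H^1_0(0,1)\hookrightarrow L^\infty(0,1)$ one gets $\|F(u)(t)\|_{H^{-2}(0,1)} \le C_2\|u(t)\|^2_{L^2(0,1)}$, and since $H^3(0,1)\cap H^2_0(0,1)\hookrightarrow H^2_0(0,1)$ induces the dual embedding $H^{-2}(0,1)\hookrightarrow (H^3(0,1)\cap H^2_0(0,1))'$ with norm at most one, this upgrades to
$$
\|F(u)(t)\|_{(H^3\cap H^2_0)'} \le C_2\|u(t)\|^2_{L^2(0,1)}, \qquad \text{a.e. } t\in(0,T).
$$

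The crucial step is the insertion of the weights. Writing $u = \rho_0\cdot(u/\rho_0)$ in the definition of the $\F$-norm yields
$$
\|F(u)\|^2_{\F} = \int_0^T \rho_\F^{-2}\,\|F(u)\|^2_{(H^3\cap H^2_0)'} \le C_2^2\int_0^T \left(\frac{\rho_0^2}{\rho_\F}\right)^2 \left\|\frac{u}{\rho_0}\right\|^4_{L^2(0,1)}.
$$
Here the construction of $\rho_0,\rho_\F$ pays off exactly: by \Cref{Remark-weights} we have $\rho_0^2/\rho_\F \le 1$ on $[0,T]$, so the weight is harmless and $\|F(u)\|_{\F} \le C_2\,\|u/\rho_0\|^2_{L^4(0,T;L^2)}$. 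Next, since \eqref{estimate-weighted} controls $u/\rho_0$ in $L^2(0,T;H^1_0)\cap H^1(0,T;(H^3\cap H^2_0)')$, the embedding \eqref{embedding} applies to $u/\rho_0$ and gives $\|u/\rho_0\|_{L^4(L^2)} \le C\,C_1\big(\|u_0\|_{H^{-1}(0,1)} + \|f/\rho_\F\|_{L^2((H^3\cap H^2_0)')}\big) = C\,C_1\big(\|u_0\|_{H^{-1}(0,1)} + \|f\|_{\F}\big)$. Combining the last two displays produces the announced quadratic estimate with $C_3 := C_2 (C\,C_1)^2$.

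To conclude, I would fix $R \le 1/(4C_3)$ and restrict to initial data with $\|u_0\|_{H^{-1}(0,1)} \le R$; then for $\|f\|_{\F}\le R$ we obtain $\|\Lambda(f)\|_{\F} \le C_3(2R)^2 = 4C_3 R^2 \le R$, so $\overline{B(0,R)}$ is stable under $\Lambda$. The main obstacle is precisely the weighted bookkeeping: because $F$ is quadratic, the solution weight $\rho_0$ enters to the fourth power against the single factor $\rho_\F^{-2}$ from the source space, and the estimate closes only thanks to the sharp comparison $\rho_0^2/\rho_\F\le1$ guaranteed by the choice \eqref{choice-p_q} of $p,q$. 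The superlinear (quadratic) character of the bound is also what forces the smallness of both $R$ and $u_0$, which is consistent with the local nature of the controllability result.
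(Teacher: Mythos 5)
Your proof is correct, and its skeleton is the same as the paper's: everything rests on the weighted estimate \eqref{estimate-weighted} of \Cref{Proposition-weighted}, the weight comparison $\rho_0^2/\rho_\F\le 1$ of \Cref{Remark-weights}, the quadratic character of $F$ from \eqref{function_nonlinear}, and a choice of $R$ inversely proportional to the resulting constant, together with the restriction $\|u_0\|_{H^{-1}(0,1)}\le R$ (which, exactly as in the paper, must be read as part of the statement).

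The one point where you genuinely diverge is the handling of the quadratic term. The paper bounds $\big\|F(u)/\rho_\F\big\|_{H^{-2}(0,1)}$ pointwise in time by the product $C_2\big\|u/\rho_0\big\|_{H^1_0(0,1)}\big\|u/\rho_0\big\|_{H^{-1}(0,1)}$ (this is \eqref{norm_F-weighted}) and then integrates in time by pairing $L^2(0,T;H^1_0)$ with $\C^0([0,T];H^{-1})$, both factors being supplied directly by \eqref{estimate-weighted}. You instead keep the cruder pointwise bound $C_2\big\|u/\rho_0\big\|^2_{L^2(0,1)}$ and pay for it with the $L^4(0,T;L^2)$ norm of $u/\rho_0$, which you recover from the $L^2(H^1_0)\cap H^1\big((H^3\cap H^2_0)'\big)$ control in \eqref{estimate-weighted} via the embedding \eqref{embedding}. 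These are two packagings of the same interpolation fact (the square of the $L^2$ norm is controlled by the product of the $H^{-1}$ and $H^1_0$ norms), so the routes are equivalent in substance; the paper's version avoids importing the embedding constant of \eqref{embedding} into the fixed-point ball, while yours is slightly more modular and has the merit of making explicit the dual embedding $H^{-2}(0,1)\hookrightarrow \big(H^3(0,1)\cap H^2_0(0,1)\big)'$, which the paper uses only tacitly when it concludes $\Lambda(f)\in\overline{B(0,R)}\subset\F$ from a bound in $L^2(0,T;H^{-2}(0,1))$.
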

\begin{proof} 
Using the definition \eqref{function_nonlinear}, we have  $\forall \vphi\in H^2_0(0,1)$, 
\begin{align*}
	\left|\left\langle \frac{F(u)}{\rho_\F}, \vphi \right\rangle_{H^{-2}, H^2_0}\right| &= \frac{1}{2} \left|\int_0^1 \frac{u^2}{\rho_F} \vphi_x\right| \\
	   & \leq \frac{1}{2} \left\|\frac{\rho_0^2}{\rho_\F} \right\|_{L^\infty(0,T)} \left\|\frac{u}{\rho_0}  \right\|_{H^1_0(0,1)} \left|\int_0^1 \frac{u}{\rho_0} \vphi_x \right|  .
   \end{align*} 
But $\displaystyle \rho^2_0(t)/ \rho_\F(t)\leq 1$ for all $t\in [0,T]$ and thus there exists some constant $C_2>0$ such that
\begin{align}\label{norm_F-weighted}
	\left\|\frac{F(u)}{\rho_\F}\right\|_{H^{-2}(0,1)} \leq C_2 \left\|\frac{u}{\rho_0}  \right\|_{H^1_0(0,1)} \left\|\frac{u}{\rho_0}  \right\|_{H^{-1}(0,1)}.
\end{align}
Hence, for any source term $f\in \overline{B(0,R)}$, one has 
\begin{align} 
\left\|\frac{\Lambda(f)}{\rho_\F} \right\|_{L^2(0,T; H^{-2}(0,1))} 
 =  &	\left\|\frac{F(u)}{\rho_\F} \right\|_{L^2(0,T; H^{-2}(0,1))}   \notag \\
\leq  & C_2 \left\|\frac{u}{\rho_0}  \right\|_{\C^0([0,T]; H^{-1}(0,1))} \left\|\frac{u}{\rho_0}  \right\|_{L^2(0,T; H^1_0(0,1))}  \label{eq-1} \\  
\label{eq-2}
\leq &  C_1^2C_2 \left(\|u_0\|_{H^{-1}(0,1)} + \left\|\frac{f}{\rho_\F}\right\|_{L^2(0,T; (H^{3}\cap H^2_0)^\prime)}      \right)^2 \\  \label{eq-3}
 \leq &  4C^2_1C_2 R^2,
\end{align} 
   where we have used the estimate  \eqref{estimate-weighted} to obtain the equation \eqref{eq-2} from \eqref{eq-1}, and then in \eqref{eq-2} we consider the initial data $u_0$ such that $\|u_0\|_{H^{-1}(0,1)}\leq R$.
 Now, consider 
   \begin{align}\label{definition-R}
   	 R = \frac{1}{8C^2_1C_2},
   \end{align} 
so that, from  \eqref{eq-3}, we have 
\begin{align*}
\left\|\frac{\Lambda(f)}{\rho_\F} \right\|_{L^2(0,T; H^{-2}(0,1))} \leq R/ 4\leq R ,
\end{align*}
yielding that $\overline{B(0,R)}$ is invariant under the map $\Lambda$, defined in \eqref{map-fixed_point}. 
\end{proof} 

We also have the following result. 
\begin{lemma}[Contraction]\label{Lemma-contraction}
	The map $\Lambda$ defined by  \eqref{map-fixed_point} is a contraction map on the closed ball $\overline{B(0,R)}$. 
\end{lemma}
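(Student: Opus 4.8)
The plan is to exploit the linearity of the solution map furnished by \Cref{Proposition-weighted}, combined with a bilinear estimate for the nonlinearity that mirrors the computation carried out in \Cref{Lemma-stability}. First I would fix two source terms $f_1, f_2 \in \overline{B(0,R)}$ and let $(u_1,v_1)$, $(u_2,v_2)$ be the corresponding solutions of \eqref{System-source} produced by \Cref{Proposition-weighted}, both issued from the \emph{same} initial datum $u_0$. By the linearity of the map \eqref{map-linearity}, the difference $w := u_1 - u_2$ solves \eqref{System-source} with source term $f_1 - f_2$ and zero initial datum, so the weighted estimate \eqref{estimate-weighted} yields
\begin{align*}
\left\| \frac{w}{\rho_0} \right\|_{\C^0(H^{-1}) \cap L^2(H^1_0)} \leq C_1 \left\| \frac{f_1 - f_2}{\rho_\F} \right\|_{L^2((H^{3}\cap H^2_0)')} = C_1 \|f_1 - f_2\|_\F .
\end{align*}

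Next I would estimate $\Lambda(f_1) - \Lambda(f_2) = -(F(u_1) - F(u_2))$. Using the factorization $u_1^2 - u_2^2 = (u_1 - u_2)(u_1 + u_2)$ in the definition \eqref{function_nonlinear}, for every $\varphi \in H^2_0(0,1)$ one has
\begin{align*}
\left| \left\langle \frac{F(u_1) - F(u_2)}{\rho_\F}, \varphi \right\rangle_{H^{-2}, H^2_0} \right| = \frac{1}{2} \left| \int_0^1 \frac{\rho_0^2}{\rho_\F} \, \frac{w}{\rho_0} \, \frac{u_1 + u_2}{\rho_0} \, \varphi_x \right| .
\end{align*}
Arguing exactly as in \eqref{norm_F-weighted}, using $\|\rho_0^2/\rho_\F\|_{L^\infty(0,T)} \leq 1$ (see \Cref{Remark-weights}) and the embedding $H^1_0(0,1) \hookrightarrow L^\infty(0,1)$, this gives the pointwise-in-time bound
\begin{align*}
\left\| \frac{F(u_1) - F(u_2)}{\rho_\F} \right\|_{H^{-2}(0,1)} \leq C_2 \left\| \frac{w}{\rho_0} \right\|_{H^1_0(0,1)} \left\| \frac{u_1 + u_2}{\rho_0} \right\|_{H^{-1}(0,1)} ,
\end{align*}
whence, integrating in time and applying Cauchy-Schwarz,
\begin{align*}
\|\Lambda(f_1) - \Lambda(f_2)\|_\F \leq C_2 \left\| \frac{w}{\rho_0} \right\|_{L^2(H^1_0)} \left\| \frac{u_1 + u_2}{\rho_0} \right\|_{\C^0(H^{-1})} .
\end{align*}

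Finally I would control the two factors. The first is bounded by $C_1 \|f_1 - f_2\|_\F$ as above. For the second, by linearity $u_1 + u_2$ solves \eqref{System-source} with source $f_1 + f_2$ and initial datum $2u_0$, so \eqref{estimate-weighted} together with $\|u_0\|_{H^{-1}} \leq R$ and $f_1, f_2 \in \overline{B(0,R)}$ gives $\|(u_1+u_2)/\rho_0\|_{\C^0(H^{-1})} \leq C_1\big(2\|u_0\|_{H^{-1}} + \|(f_1+f_2)/\rho_\F\|_\F\big) \leq 4 C_1 R$. Combining these estimates,
\begin{align*}
\|\Lambda(f_1) - \Lambda(f_2)\|_\F \leq 4 C_1^2 C_2 R \, \|f_1 - f_2\|_\F ,
\end{align*}
and with the choice $R = 1/(8 C_1^2 C_2)$ already fixed in \eqref{definition-R}, the contraction constant equals $1/2 < 1$, so $\Lambda$ is a contraction on $\overline{B(0,R)}$. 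I expect the only delicate point to be the correct splitting of $u_1^2 - u_2^2$: one factor must carry the $H^1_0$-regularity (absorbed by the $L^2(H^1_0)$ smoothing in \eqref{estimate-weighted}) and the other the $\C^0(H^{-1})$ bound, while the weight bookkeeping through $\rho_0^2/\rho_\F \leq 1$ is what keeps the resulting estimate closed in $\F$.
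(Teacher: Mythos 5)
Your proof is correct and follows essentially the same route as the paper: linearity of the solution map from \Cref{Proposition-weighted} applied to the difference $f_1-f_2$, the factorization $u_1^2-u_2^2=(u_1-u_2)(u_1+u_2)$ combined with $\rho_0^2/\rho_\F\leq 1$, and the choice of $R$ in \eqref{definition-R} yielding the contraction constant $1/2$. The only (immaterial, symmetric) difference is that you place the $L^2(H^1_0)$ norm on the difference $u_1-u_2$ and the $\C^0(H^{-1})$ norm on the sum $u_1+u_2$, whereas the paper does the reverse; both assignments are covered by the weighted estimate \eqref{estimate-weighted} and give the same bound $4C_1^2C_2R$.
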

\begin{proof} 
Consider $f_1, f_2 \in \overline{B(0,R)}$. Then in view of \Cref{Proposition-weighted}, there exists control $h_j \in \V$ and  solution $(u_j,v_j)$ to the system \eqref{System-source} associated with the source term $f_j$ for  $j=1,2$. 
%
     
By the linearity of the solution map (thanks to \Cref{Proposition-weighted}),  we have by means of the estimate \eqref{estimate-weighted}), 
\begin{multline}\label{diff-estimate}
	\left\|\frac{u_1-u_2}{\rho_0} \right\|_{\C^0(H^{-1}) \cap L^2(H^1_0) \cap H^1((H^{3}\cap H^2_0)')} + 	\left\|\frac{v_1-v_2}{\rho_0} \right\|_{L^2(H^2\cap H^1_0)} + \left\|\frac{h_1-h_2}{\rho_0} \right\|_{L^2((0,T)\times \omega)} \\
	\leq C_1  \left\|\frac{f_1-f_2}{\rho_\F} \right\|_{L^2((H^{3}\cap H^2_0)')} .
\end{multline}
Now, using \eqref{function_nonlinear}, we have $\forall \vphi\in H^2_0(0,1)$, 
\begin{align*}
	\left|\left\langle \frac{F(u_1)-F(u_2)}{\rho_\F}, \vphi\right\rangle_{H^{-2}, H^2_0}\right| &\leq  \frac{1}{2}\left|\frac{\rho_0^2(t)}{\rho_\F(t)} \right| \left| \int_0^1 \left(\frac{u^2_1 - u^2_2}{\rho_0^2} \right)\vphi_x\right| \\
	&\leq  \frac{1}{2}\left(\left\| \frac{u_1}{\rho_0}\right\|_{H^1_0(0,1)} + \left\|\frac{u_2}{\rho_0}\right\|_{H^1_0(0,1)}\right) \left|\int_0^1 \frac{(u_1-u_2)}{\rho_0} \vphi_x \right| ,
\end{align*}
and so,
\begin{align*}
	\left\|\frac{F(u_1)-F(u_2)}{\rho_\F}\right\|_{H^{-2}(0,1)} \leq C_2 \left(\left\| \frac{u_1}{\rho_0} \right\|_{H^1_0(0,1)} + \left\|\frac{u_2}{\rho_0}\right\|_{H^1_0(0,1)}\right) \left\|\frac{u_1-u_2}{\rho_0}\right\|_{H^{-1}(0,1)},
\end{align*}
where the constant  $C_2$ is the one which has  appeared in \eqref{norm_F-weighted}. 
Using this, we have 
\begin{align}
	&\left\| \frac{\Lambda(f_1) - \Lambda(f_2)}{\rho_\F} \right\|_{L^2(0,T; H^{-2}(0,1))}    \notag \\
	 = &  	\left\| \frac{F(u_1) - F(u_2)}{\rho_\F} \right\|_{L^2(0,T;H^{-2}(0,1))}  \notag \\
	 \label{tag-1}
	 \leq  &  C_2 \left(\left\| \frac{u_1}{\rho_0} \right\|_{L^2(0,T;H^1_0(0,1))} + \left\|\frac{u_2}{\rho_0}\right\|_{L^2(0,T;H^1_0(0,1))}\right) \left\|\frac{u_1-u_2}{\rho_0}\right\|_{\C^0([0,T]; H^{-1}(0,1))}\\
	 \label{tag-2}
	 \leq & C_2 \left(2C_1\|u_0\|_{H^{-1}(0,1)} + C_1\left\|\frac{f_1}{\rho_\F} \right\|_{L^2((H^{3}\cap H^2_0)')} + C_1\left\|\frac{f_2}{\rho_\F} \right\|_{L^2((H^{3}\cap H^2_0)')}   \right) C_1 \left\|\frac{f_1-f_2}{\rho_\F} \right\|_{L^2((H^{3}\cap H^2_0)')}  \\
	 \label{tag-3}
	  \leq & 4 C_1^2 C_2  R \left\|\frac{f_1-f_2}{\rho_\F} \right\|_{L^2((H^{3}\cap H^2_0)')} \\
	 \leq & \frac{1}{2} \left\|\frac{f_1-f_2}{\rho_\F} \right\|_{L^2((H^{3}\cap H^2_0)')}  \label{tag-4},
\end{align}
where we make use of the estimate \eqref{estimate-weighted} to deduce the inequality \eqref{tag-2} from \eqref{tag-1} and then in \eqref{tag-2}, we used  $f_1, f_2 \in \overline{B(0,R)}$ and $\|u_0\|_{H^{-1}(0,1)}\leq R$. Finally, thanks to the choice of $R$ in \eqref{definition-R}, we get \eqref{tag-4} leading to the fact that the map $\Lambda$ is a contraction map on $\overline{B(0,R)}$. 
\end{proof}
\subsubsection{\bf
	Proof of \Cref{Thm-nonlinear-KS} }
\begin{proof}
The proof is followed by the \Cref{Lemma-stability} and \ref{Lemma-contraction}. Thanks to those results, 
we can apply
 the Banach fixed point argument which ensures the existence of a unique fixed point of the map $\Lambda$ in $\overline{B(0,R)}\subset \F$.  Denote the fixed point by $f_0 \in \F$.   
 
 Now, in terms of  to \Cref{Proposition-weighted}, with the above $f_0 \in \overline{B(0,R)}$ and initial data $\|u_0\|_{H^{-1}(0,1)}\leq R$, there exists a control $h\in \V$ that satisfies the estimate \eqref{estimate-weighted}.   Then, the property $\displaystyle \lim_{t\to T^{-}} \rho_0(t)=0$ forces that 
 $$ \left(u(T,x), v(T,x)  \right) =(0,0) , \quad \forall \,  x \, \in (0,1).$$
 This proves the local null-controllability of the nonlinear system \eqref{nonlinear-KS}--\eqref{bd}--\eqref{in}.  
\end{proof}

\section{Control acting in the elliptic equation}\label{Section-elliptic}

In this section, we study the controllability property of the system \eqref{nonlinear-elliptic}--\eqref{bd}--\eqref{in}, that is when a control locally acts  only in the elliptic equation.   
As earlier, a suitable Carleman estimate is required to get an observability inequality for the linearized model \eqref{main-2}--\eqref{bd}--\eqref{in}.

\subsection{A global Carleman estimate}\label{sec-carleman-2}

 We have the following estimate.

\begin{theorem}[Carleman estimate: control in elliptic eq.]\label{Thm_Carleman_main 2}
	Let the weight functions $(\vphi, \xi)$  be given by \eqref{weight_function}. Then, there exist positive constants $\lambda_0$, $s_0:=  \mu_0(T+T^{2})$ for some $\mu_0$ and $C$ which  depend on $\gamma_1, \gamma_2, a,b,c$ and the set $\omega$,  such that we have the following estimate satisfied by the solution to \eqref{Adjoint-system-2},
	\begin{align}\label{carle-joint 2} 
		I_{KS}(s,\lambda; \sigma) +   	I_{E}(s,\lambda; \psi)
		\leq Cs^{11}\lambda^{12}\int_0^T\int_{\omega} e^{-2s\vphi} \xi^{11} |\psi|^2,
	\end{align}
	for all $\lambda \geq \lambda_0$ and $s\geq s_0$, where $I_{KS}(s,\lambda; \sigma)$ and $I_E(s,\lambda; \psi)$ are introduced by \eqref{Notation_KS} and \eqref{Notation_elliptic} respectively. 
\end{theorem}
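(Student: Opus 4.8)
The plan is to mirror the proof of \Cref{Thm_Carleman_main}, interchanging the roles of $\sigma$ and $\psi$ in the final bookkeeping so that the observation is retained on the elliptic component $\psi$. First I would add the two individual Carleman inequalities \eqref{carle-ks} and \eqref{carle-ell} of \Cref{thm-carleman-sigma} and \Cref{thm-carleman-psi}, which produces exactly \eqref{Add_carlemans}; carrying out Step~1 of \Cref{Thm_Carleman_main} verbatim (absorbing the two global lower-order integrals by means of $1\leq CT^2\xi$ and $s\geq\widehat C T^2$) then reduces the right-hand side to \eqref{Add_carlemans-2}, i.e. to the two localized observation integrals over $\omega_0$, one in $|\sigma|^2$ and one in $|\psi|^2$. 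It remains only to convert the $\sigma$-observation into a $\psi$-observation.

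This is the sole step where the argument genuinely departs from the KS case. Since $a\neq 0$, the second equation of \eqref{Adjoint-system-2} yields $\sigma=\tfrac1a(-\psi_{xx}+c\psi)$ in $Q_T$. Choosing $\omega_0\subset\subset\omega_1\subset\subset\omega$ and a cutoff $\phi\in\C^\infty_c(\omega_1)$ with $0\leq\phi\leq1$ and $\phi=1$ on $\omega_0$, I would write
\begin{align*}
    s^7\lambda^8 \int_0^T\int_{\omega_0} e^{-2s\vphi}\xi^7|\sigma|^2
    \leq \frac{1}{a}\, s^7\lambda^8 \int_0^T\int_{\omega_1} \phi\, e^{-2s\vphi}\xi^7\, \sigma\,(-\psi_{xx}+c\psi),
\end{align*}
and integrate the $\psi_{xx}$ contribution by parts twice in $x$ (no boundary terms, since $\phi$ is compactly supported in $\omega_1$), moving both derivatives onto $\phi\, e^{-2s\vphi}\xi^7\sigma$. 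Expanding $\partial_{xx}(\phi\, e^{-2s\vphi}\xi^7\sigma)$ and using the weight bounds \eqref{weight-deri-x} with the Cauchy-Schwarz and Young inequalities, every resulting term factors into a $\sigma$-part, absorbed by the corresponding leading integral $s^7\lambda^8\xi^7|\sigma|^2$, $s^5\lambda^6\xi^5|\sigma_x|^2$ or $s^3\lambda^4\xi^3|\sigma_{xx}|^2$ of $I_{KS}(s,\lambda;\sigma)$, and a $\psi$-part placed on the right. The decisive contribution is the genuine $\sigma_{xx}\psi$ term, whose balance $s^7\lambda^8\xi^7=(s^{3/2}\lambda^2\xi^{3/2})(s^{11/2}\lambda^6\xi^{11/2})$ produces precisely the observation weight $s^{11}\lambda^{12}\xi^{11}|\psi|^2$, while all remaining terms (from $\sigma_x$, $\sigma$, the weight derivatives and the $c\psi$ part) yield $\psi$-observations of strictly lower weight. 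This explains both why one step with only two integrations by parts suffices and why the exponent here is $s^{11}\lambda^{12}$, as opposed to $s^{15}\lambda^{16}$ in the KS case, where inverting the fourth-order operator forces $\sigma_{xxxx}$.

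Finally, after fixing $\epsilon>0$ small enough so that all $\sigma$-type contributions are absorbed into the left-hand side of \eqref{Add_carlemans-2}, the only surviving integral is the $\psi$-observation over $\omega$; the residual term $s^3\lambda^4\xi^3|\psi|^2$ left from Step~1 is dominated by $s^{11}\lambda^{12}\xi^{11}|\psi|^2$ for $s\geq s_0$ and $\lambda\geq\lambda_0$, which gives \eqref{carle-joint 2}. I expect the main obstacle to be purely the power-counting bookkeeping in the middle step: one must check that each of the many terms generated by $\partial_{xx}(\phi\, e^{-2s\vphi}\xi^7\sigma)$ carries weight powers matching a leading $\sigma$-integral of $I_{KS}$, so that it can be absorbed, while its paired $\psi$-factor never exceeds $s^{11}\lambda^{12}\xi^{11}$.
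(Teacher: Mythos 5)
Your proposal is correct and follows essentially the same route as the paper's proof: adding the two individual Carleman estimates, absorbing the global lower-order terms as in Step 1 of \Cref{Thm_Carleman_main}, then substituting $\sigma=\tfrac1a(-\psi_{xx}+c\psi)$ with a cutoff supported in $\omega_1$, integrating by parts twice, and using \eqref{weight-deri-x} with Young's inequality so that the $\sigma$-terms are absorbed by $I_{KS}$ while the $\psi$-terms carry at most the weight $s^{11}\lambda^{12}\xi^{11}$. Your power-counting for the decisive $\sigma_{xx}\psi$ term, $s^7\lambda^8\xi^7=(s^{3/2}\lambda^2\xi^{3/2})(s^{11/2}\lambda^6\xi^{11/2})$, is exactly the balance that appears in the paper's estimate of $B_1$.
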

\begin{proof} 
 Adding the two individual Carleman estimates for KS and elliptic parts, we have the following estimate (as obtained in \eqref{Add_carlemans-2}), 
\begin{align}\label{Add_carlemans-3-2}
	I_{KS}(s,\lambda; \sigma) + I_{E}(s,\lambda; \psi) 
	\leq C \bigg[
	s^7\lambda^8 \int_0^T\int_{\omega_0} e^{-2s\vphi} \xi^7 |\sigma|^2 
	+ s^3\lambda^4 \int_0^T\int_{\omega_0} e^{-2s\vphi} \xi^3 |\psi|^2\bigg],
\end{align}
for all $\lambda\geq \lambda_0$ and $s\geq \mu_0(T+T^2)$ where $\lambda_0$ and $\mu_0$ are as introduced in the Step 1 of the proof of \Cref{Thm_Carleman_main}. 

Thus,  we just need to absorb the integral related to $\sigma$ from the right hand side of \eqref{Add_carlemans-3-2}.

\smallskip 

\noindent 
	{\bf Absorbing the observation integral in $\psi$.}
Recall	 the adjoint system \eqref{Adjoint-system-2}, one has (since $a\neq 0$),
	\begin{align*}
		\sigma = \frac{1}{a} \left(  -\psi_{xx}+c\psi \right), \quad \text{in } Q_T.
	\end{align*}
	We also
recall the smooth function $\phi \in \C^\infty_c(\omega_1)$ as given by \eqref{smooth_func}.
	
Using these, we have 
\begin{align}\label{Obs-psi 1}
	s^7\lambda^8 \int_0^T\int_{\omega_0} e^{-2s\vphi} \xi^7 |\sigma|^2 
	&\leq s^7\lambda^8 \int_0^T\int_{\omega_1} \phi e^{-2s\vphi} \xi^7 |\sigma|^2 \\ \notag 
	&= \frac{1}{a}s^7\lambda^8 \int_0^T\int_{\omega_1} \phi e^{-2s\vphi} \xi^7 \sigma \left( -\psi_{xx}+c\psi   \right) \\  \notag 
	&=B_1+ B_2.
\end{align}
(i) Let us  first calculate $B_1$. 
\begin{align*}
	B_1=&\frac{1}{a}s^7\lambda^8\int_0^T\int_{\omega_1}\bigg[ \phi_x e^{-2s\vphi} \xi^7 \sigma \psi_x+\phi (e^{-2s\vphi} \xi^7)_x \sigma \psi_x+\phi_x e^{-2s\vphi} \xi^7 \sigma_x \psi_x\bigg]\\
	=&-\frac{1}{a}s^7\lambda^8\int_0^T\int_{\omega_1}\bigg[ \left(\phi_{xx} e^{-2s\vphi} \xi^7 + 2\phi_x (e^{-2s\vphi} \xi^7)_x +\phi (e^{-2s\vphi} \xi^7)_{xx}\right) \sigma \psi 
	\\ 
	& \quad + \phi e^{-2s\vphi} \xi^7 \sigma_{xx} \psi
	+\left(2\phi_x  e^{-2s\vphi} \xi^7 + 2\phi (e^{-2s\vphi} \xi^7)_x \right)\sigma_x \psi  \bigg].
\end{align*}
Next, using the bounds \eqref{weight-deri-x} and  applying the Cauchy-Schwarz and Young's inequalities successively, we get 
\begin{align*}
|B_1|&\leq C s^7\lambda^8 \int_0^T\int_{\omega_1} (e^{-2s\vphi} \xi^7) |\sigma| |\psi|+C s^8\lambda^9 \int_0^T\int_{\omega_1} (e^{-2s\vphi} \xi^8) |\sigma| |\psi|\\
&\quad +C s^9\lambda^{10} \int_0^T\int_{\omega_1} (e^{-2s\vphi} \xi^9) |\sigma| |\psi|+C s^7\lambda^{8} \int_0^T\int_{\omega_1} (e^{-2s\vphi} \xi^7) |\sigma_{xx}| |\psi|\\
&\quad\quad + C s^7\lambda^{8} \int_0^T\int_{\omega_1} (e^{-2s\vphi} \xi^7) |\sigma_x| |\psi|+ C s^8\lambda^9 \int_0^T\int_{\omega_1} (e^{-2s\vphi} \xi^8) |\sigma_x| |\psi|
\\
&\leq 3\epsilon  s^7\lambda^8 \iintQ e^{-2s\vphi} \xi^7 |\sigma|^2  + \epsilon s^3 \lambda^4 \iintQ e^{-2s\vphi} \xi^3 |\sigma_{xx}|^2 + 2\epsilon s^5\lambda^6 \iintQ e^{-2s\vphi} \xi^5 |\sigma_{x}|^2 \\
& \quad + \frac{C}{\epsilon} \int_0^T\int_{\omega_1} e^{-2s\vphi} \left(s^7\lambda^8 \xi^7 + s^9\lambda^{10} \xi^9 + s^{11} \lambda^{12} \xi^{11}\right) |\psi|^2 \\
&\leq 3\epsilon I_{KS}(s,\lambda; \sigma) +\frac{C}{\epsilon} s^{11}\lambda^{12} \int_0^T\int_{\omega_1} e^{-2s\vphi} \xi^{11} |\psi|^2.
\end{align*}

\noindent 
(ii) Estimate of $B_2$ can be taken care in a similar manner.  We then fix a small $\epsilon>0$  such that there is a positive constant $C$ with the following:
\begin{align*}
	I_{KS}(s,\lambda; \sigma) + I_{E}(s,\lambda; \psi) 
	\leq C s^{11} 
	\lambda^{12 }\int_0^T\int_{\omega} (e^{-2s\vphi} \xi^{11}) |\psi|^2,
\end{align*}
for all $\lambda\geq \lambda_0$ and $s\geq \mu_0(T+T^2)$. 

This ends the proof. 
\end{proof}

\subsection{Observability inequality and null-controllability of the linearized model}
Using the Carleman estimate \eqref{carle-joint 2}, one can prove the following observability inequality. 
\begin{proposition}[Observability inequality: control in elliptic eq.]\label{prop:refined}
	There exists a positive   constant $C$  depending on $\omega$, $a$, $b$, $c$, $\gamma_1$, $\gamma_2$ and $N$ but not $T$,  such that
	we have the following observability inequality
	\begin{align}\label{Partial-obser-2}
		\|\sigma(0,\cdot)\|^2_{H^2_0(0,1)} + 	\|\psi(0,\cdot)\|^2_{H^2(0,1)}\leq
		{ C e^{C/T}}\int_0^T\int_{\omega}  |\psi|^2  ,
	\end{align}
	where $(\sigma,\psi)$ is the solution to the adjoint system \eqref{Adjoint-system-2}.
	\end{proposition}
\begin{proof}
The proof of above proposition will be done  in the same spirit of the proof of  \Cref{prop:refined_a_0}.  We skip the details. 
\end{proof}

The proof of required controllability result, i.e., \Cref{Thm-linear-control-KS}--\Cref{point-2} will be followed by a  similar argument as we addressed  in Section \ref{Section-obser}. 
%
%

\subsection{Null-controllability of the nonlinear system: proof of \Cref{Thm-nonlinear-Ellp}}  To prove the null-controllability of the system \eqref{nonlinear-elliptic}--\eqref{bd}--\eqref{in}, we will follow the same procedure as described in Section \ref{Section-nonlinear-KS}. We shall omit the details for this case since the computations are similar as of Section \ref{Section-nonlinear-KS}. 

In what follows,  recall the weight functions $\rho_0$, $\rho_\F$ and the spaces $\F$, $\Y$ from Section \ref{Section-nonlinear-KS}.  Then, we consider the following system with a source term $f\in \F$,
\begin{equation}\label{System-source-2}
	\begin{cases}
		u_t+\gamma_1 u_{xxxx}+u_{xxx}+\gamma_2 u_{xx} = f+ a v &\text{in } Q_T,\\
		-v_{xx}+ c v= b u +\chi_{\omega} h , & \text{in } Q_T,\\ 
		u(t, 0)=0,  \  u(t, 1)=0,  & t \in (0, T),\\
		u_x(t,0)=0, \  u_x(t,1)=0 & t \in (0, T)\\
		v(t, 0)=0,  \  v(t, 1)=0 & t \in (0, T),\\
		u(0, x)=u_0(x), & x \in  (0, 1),
	\end{cases}
\end{equation}	
with $u_0 \in H^{-1}(0,1)$. 

Similar to \Cref{Proposition-weighted}, one can prove the following results. 
\begin{proposition}\label{Proposition-weighted-2}
	Let $T>0$,  $u_0 \in H^{-1}(0,1)$  and $f\in \F$ be given.  We also take any $(a,b)\in \mathbb R^2$ with $a\neq 0$. Then, there exists a linear map 
	\begin{align}\label{map-linearity-2}
		\left[u_0, f\right] \in H^{-1}(0,1) \times \F \mapsto \left[(u,v), h  \right] \in \Y \times \V,
	\end{align}
	such that $\left[(u,v), h  \right]$ solves \eqref{System-source-2}. 
	
	Moreover, we have the following regularity estimate
	\begin{multline}\label{estimate-weighted-2}
		\left\|\frac{u}{\rho_0} \right\|_{\C^0(H^{-1}) \cap L^2(H^1_0) \cap H^1((H^{3}\cap H^2_0)')} + 	\left\|\frac{v}{\rho_0} \right\|_{L^2(H^2\cap H^1_0)} + \left\|\frac{h}{\rho_0} \right\|_{L^2((0,T)\times \omega)}\\
		\leq  \widehat C_1  \left(\|u_0\|_{H^{-1}(0,1)} + \left\|\frac{f}{\rho_\F} \right\|_{L^2((H^{3}\cap H^2_0)')} \right),
	\end{multline}
	where the constant $\widehat C_1>0$ does not depend on $u_0$, $f$ or $h$. 
\end{proposition}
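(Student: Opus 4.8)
The plan is to reproduce, almost verbatim, the source-term/time-stepping argument used in the proof of \Cref{Proposition-weighted}, the only structural change being that we now invoke the linear null-controllability with the control acting in the elliptic equation, namely \Cref{Thm-linear-control-KS}--\Cref{point-2}, together with its cost $Ke^{K/T}$. First I would record a parabolic-elliptic regularity result for \eqref{System-source-2} analogous to \Cref{prop-esti-regular}: for $u_0\in H^{-1}(0,1)$, $f\in L^2(0,T;(H^3\cap H^2_0)')$ and $h\in L^2((0,T)\times\omega)$ there is a unique solution with $u\in \C^0(H^{-1})\cap L^2(H^1_0)\cap H^1((H^3\cap H^2_0)')$ and $v\in L^2(H^2\cap H^1_0)$, the elliptic component being recovered algebraically through $v=(-\partial_{xx}+cI)^{-1}(bu+\chi_\omega h)$ and the operator $(-\partial_{xx}+cI)^{-1}$ being bounded thanks to $c>-\pi^2$.

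Next I would introduce the same partition $T_k = T - T/q^k$ and keep the same relation \eqref{relation_rho_0_rho_F} between $\rho_0$ and $\rho_\F$. On each slab $(T_k,T_{k+1})$ I would split the solution as $(u,v)=(\widetilde u,\widetilde v)+(\bar u,\bar v)$, where $(\widetilde u,\widetilde v)$ is the free (uncontrolled) solution driven by $f$ with $\widetilde u(T_k^+)=0$, and $(\bar u,\bar v)$ is the controlled solution starting from $m_k=\widetilde u(T_k^-)$ and driven by $h_k$ so that $\bar u(T_{k+1}^-)=0$. The bound $\|m_{k+1}\|_{H^{-1}}\le C\|f\|_{L^2(T_k,T_{k+1};(H^3\cap H^2_0)')}$ and the control cost $\|h_k\|_{L^2}\le Ke^{K/(T_{k+1}-T_k)}\|m_k\|_{H^{-1}}$ coming from \Cref{Thm-linear-control-KS}--\Cref{point-2} are exactly as before; combining them with \eqref{relation_rho_0_rho_F} and the monotonicity of $\rho_0,\rho_\F$ yields the weighted control estimate $\|h/\rho_0\|_{L^2((0,T)\times\omega)}\le K_3 e^{K_2/T}(\|u_0\|_{H^{-1}} + \|f/\rho_\F\|_{L^2((H^3\cap H^2_0)')})$ upon summing over $k$, mirroring \eqref{wght_ctl_est}.

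Gluing the slabs, continuity of the KS-KdV component $u$ at each node $T_{k+1}$ follows from $\widetilde u(T_{k+1}^-)+\bar u(T_{k+1}^-)=m_{k+1}+0=m_{k+1}=u(T_{k+1}^+)$, so $u\in\C^0(H^{-1})$ across the whole interval, and the weighted estimate for $u/\rho_0$ in $\C^0(H^{-1})\cap L^2(H^1_0)\cap H^1((H^3\cap H^2_0)')$ is obtained slab-by-slab and summed exactly as in \eqref{esti-sol-0-T}. The linearity of both the free and the controlled construction delivers the asserted linear map $[u_0,f]\mapsto[(u,v),h]$.

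The hard part — and the genuine difference from \Cref{Proposition-weighted} — is the elliptic component $v$. Since the control now enters the elliptic equation, $v=(-\partial_{xx}+cI)^{-1}(bu+\chi_\omega h)$ inherits the time-regularity of $h$, and $h=\sum_k h_k\chi_{(T_k,T_{k+1})}$ is only $L^2$ in time with possible jumps at the nodes; consequently $v$ cannot be expected to be continuous in time and one can only propagate $L^2(H^2\cap H^1_0)$ bounds. This is precisely why the target estimate \eqref{estimate-weighted-2} keeps $v/\rho_0$ in $L^2(H^2\cap H^1_0)$ rather than in $\C^0(H^{-1})\cap L^2(H^2\cap H^1_0)$ as in \eqref{estimate-weighted}. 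I would therefore estimate $\|v/\rho_0\|_{L^2(H^2\cap H^1_0)}$ directly from the boundedness of $(-\partial_{xx}+cI)^{-1}$ together with the already-established weighted bounds on $u/\rho_0$ and $h/\rho_0$; collecting the three weighted bounds then yields \eqref{estimate-weighted-2} and completes the proof.
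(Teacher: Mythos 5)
Your proposal is correct and takes essentially the same approach the paper intends: Proposition \ref{Proposition-weighted-2} is proved by repeating the time-slicing source-term argument of \Cref{Proposition-weighted}, with the slab-wise controls now supplied by \Cref{Thm-linear-control-KS}--\Cref{point-2} and the same weight relation \eqref{relation_rho_0_rho_F}. Your explicit remark that $v=(-\partial_{xx}+cI)^{-1}(bu+\chi_\omega h)$ inherits the time-jumps of $h$ at the nodes $T_k$, which is exactly why \eqref{estimate-weighted-2} retains only the $L^2(H^2\cap H^1_0)$ bound for $v/\rho_0$ instead of the $\C^0(H^{-1})$ bound of \eqref{estimate-weighted}, is precisely the adaptation the paper leaves implicit.
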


Thereafter, the Banach fixed point argument will give us the required local null-controllability result  of the system \eqref{nonlinear-elliptic}--\eqref{bd}--\eqref{in}, that is  \Cref{Thm-nonlinear-Ellp}.   We skip the details here since it can be done in a similar fashion as in the previous case.

\section{Conclusions and open questions}\label{Sec-Conclusion}
Let us  briefly recall  the whole study of the present article. We have considered a coupled parabolic-elliptic system containing a fourth-order parabolic equation, namely KS-KdV and a second-order elliptic equation. Local distributed null-controllability of the aforementioned model has been proved by means of a localized interior control acting only in one equation where the nonlinearity is $uu_x$.

In both cases, we have established  global Carleman estimate which gives rise to some suitable observability inequality and then the usual duality argument provides the null-controllability for the associated linearized models. To this end, we employed the source term method and Banach fixed point argument to obtain the local-null controllability for the nonlinear systems.

\smallskip 

One can think of some other issues which has been studied throughout years for various model. Let us mention some of  those which can lead some future research direction.

\smallskip 

\noindent
$\bullet$ {\em Boundary null-controllability.} In \cite{hernandez2021boundary}, the authors studied the boundary local null-controllability of some nonlinear parabolic-elliptic system by means of a single control force acting at the left end of the Dirichlet boundary of parabolic component. They handled the linear model by using a boundary Carleman estimate and then the  local inversion technique has been applied to conclude the nonlinear system.

Coming to the KS equation, it is known that to deal with the boundary controllability  with single control force, one needs to put some restriction on the anti-diffusion parameter $\gamma_2$.  This issues can be overcome by introducing another control in the dynamics (see \cite{CE10}, \cite{Cer17} for more details). Moreover, in \cite{CE12}, the authors studied the local null-controllability of stabilized KS by means of three boundary controls. In the spirit of these aforementioned works, it will be interesting to consider the system with less number of control: 
\begin{equation}
	\begin{cases}
		u_t+\gamma_1 u_{xxxx}+u_{xxx}+\gamma_2 u_{xx} + uu_x = a v, &\text{ in } Q_T,\\
		-v_{xx}+ c v= b u, & \text{ in } Q_T, \\
		u(0,x)=u_0(x), & \text{ in } (0,1),
	\end{cases} 
\end{equation}
with any of the following boundary control data
\begin{equation}
	\begin{cases}
		u(t, 0)=q_0(t),  \  u(t, 1)=0,  & t \in (0, T), \\ 
		u_x(t,0)=0, \  u_x(t,1)=0, & t \in (0, T),    \\
		v(t, 0)=0,  \  v(t, 1)=0, & t \in (0, T), 
	\end{cases}
	\quad \text{ or }  \ 
	\begin{cases}u(t, 0)=0,  \  u(t, 1)=0,  & t \in (0, T), \\ 
		u_x(t,0)=0, \  u_x(t,1)=0, & t \in (0, T),    \\
		v(t, 0)=p_0(t),  \  v(t, 1)=0, & t \in (0, T).
	\end{cases}
\end{equation}

\smallskip 

\noindent
$\bullet$ {\em Null-controllability in 2D setting.} 
Controllability of KS-KdV-elliptic equation in higher dimension (in particular 2D) would be an interesting problem to study.  For the case of Chemotaxis model (second-order nonlinear parabolic-elliptic equation) \cite{GB14}, the authors obtained  the local null-controllability result in  bounded domain $\Omega \subset \mathbb{R}^N, N\geq 1$.  For the KS equation, the author in  \cite{Taka17} studied the local null-controllability in 2D by means of a boundary control by assuming some conditions on the boundary. Thanks to the recent result \cite{JR20} concerning the spectral inequality for the bi-Laplace operator, the null-controllability of the following system
\begin{equation}\label{KS-2d}
	\begin{cases}
		u_t+\Delta^2 u=\chi_{\omega}h,& (t,x) \in (0,T)\times \Omega,\\
		u=0, \ \  	\dfrac{\partial u}{\partial \nu}=0, &(t,x) \in (0,T)\times \partial\Omega,\\
		u(0,x)=u_0(x), & x\in \Omega,
	\end{cases}
\end{equation} 
can be established.  On the light of this discussion, it is reasonable to pose a local controllability problem regarding KS-elliptic model in 2D, for instance:
\begin{equation}
	\begin{cases}
		u_t+\Delta^2 u+ \nu \Delta u+ \frac{1}{2}\abs{\nabla u}^2= v+\chi_{\omega}g,& (t,x) \in (0,T)\times \Omega,\\
		-\Delta v+\gamma v=\delta u, & (t,x) \in (0,T)\times \Omega,
	\end{cases}
\end{equation}
with the boundary and initial conditions as in \eqref{KS-2d}.

\section*{Acknowledgements}
{The authors would like to express sincere thanks to the reviewer
for helpful comments, valuable questions and suggestions to improve the first version of the paper.} The work of the first author is partially supported by the French government research program
``Investissements d'Avenir" through the IDEX-ISITE initiative 16-IDEX-0001 (CAP 20-25). 
The second author acknowledges the supports from Department of Atomic Energy and NBHM Fellowship, Grant No. 0203/16(21)/2018-R\&D-II/10708.

\bibliographystyle{plain} 
\bibliography{biblio}

\end{document}